\newcommand{\ba}{\begin{array}}
\newcommand{\ea}{\end{array}}
\newcommand{\be}{\begin{enumerate}}
\newcommand{\ee}{\end{enumerate}}
\newtheorem{thm}{Theorem}[section]
\newtheorem{prop}[thm]{Proposition}
\newtheorem{lemma}[thm]{Lemma}
\newtheorem{cor}[thm]{Corollary}
\newtheorem{conj}[thm]{Conjecture}
\newtheorem{prob}[thm]{Problem}
\theoremstyle{definition}
\newtheorem{defn}[thm]{Definition}
\theoremstyle{remark}
\newtheorem{rmk}[thm]{Remark}
\newcommand{\mb}{\mathbf{1}}
\newcommand{\C}{\mathcal{C}}
\newcommand{\K}{\mathbf{k}}
\newcommand{\Hom}{\operatorname{Hom}}
\newcommand{\End}{\operatorname{End}}
\newcommand{\HOM}{\operatorname{HOM}}
\newcommand{\END}{\operatorname{END}}
\newcommand{\oplusop}[1]{{\mathop{\oplus}\limits_{#1}}}
\newcommand{\CC}{\mathbb{C}}
\newcommand{\BB}{\mathbb{B}}
\newcommand{\Bi}{\mathrm{PB}}  
\newcommand{\lra}{\longrightarrow}
\newcommand{\Kvect}{\K\mathrm{-vect}}
\newcommand{\Ztwo}{{\Z\big[\frac{1}{2}\big]}}
\newcommand{\Zen}{{\Z\big[\frac{1}{n}\big]}}
\newcommand{\ZNxi}{{\Z\big[\frac{1}{N},\xi\big]}}
\newcommand{\Zpxi}{{\Z\big[\frac{1}{p},\xi\big]}}
\newcommand{\umod}{{\textrm{--}\underline{\mathrm{mod}}}}
\newcommand{\R}{\mathbb{R}}
\newcommand{\Z}{\mathbb{Z}}
\newcommand{\Q}{\mathbb{Q}}
\newcommand{\N}{\mathbb{N}}
\newcommand{\bdry}{\partial}
\newcommand{\n}{\noindent}
\newcommand{\ot}{\otimes}
\newcommand{\we}{\wedge}
\newcommand{\ra}{\rightarrow}
\newcommand{\xra}{\xrightarrow}
\newcommand{\mf}{\mathbf}
\newcommand{\op}{\operatorname}
\newcommand{\Ker}{\operatorname{Ker}}
\newcommand{\IM}{\operatorname{Im}}
\newcommand{\cal}{\mathcal}
\newcommand{\es}{\emptyset}
\newcommand{\lan}{\langle}
\newcommand{\ran}{\rangle}
\newcommand{\pr}{\mathcal{P}}
\newcommand{\xn}{X^{\otimes n}}
\newcommand{\xm}{X^{\otimes m}}
\newcommand{\xk}{X^{\otimes k}}
\newcommand{\xnw}{{X^{\wedge}}^{\otimes n}}
\newcommand{\xkw}{{X^{\wedge}}^{\otimes k}}
\newcommand{\wtz}{z^*}
\newcommand{\lk}{L_k}
\newcommand{\rk}{R_{k-1}}
\newcommand{\Ka}{\operatorname{Ka}}
\newcommand{\B}{\mathcal{B}}
\newcommand{\wtc}{\mathcal{C}^{pre}}
\newcommand{\D}{\mathcal{D}}
\newcommand{\wt}{\widetilde}
\newcommand{\wtd}{\mathcal{D}^{pre}}
\newcommand{\E}{\mathcal{E}}
\newcommand{\T}{\mathcal{T}}
\begin{document}
\title{How to categorify the ring of integers localized at two}

\author{Mikhail Khovanov and Yin Tian}
\address{Department of Mathematics, Columbia University, New York, NY 10027}
\email{khovanov@math.columbia.edu}
\address{Yau Mathematical Sciences Center, Tsinghua University, Beijing 100084, China}
\email{ytian@math.tsinghua.edu.cn}

\maketitle

\begin{abstract}
We construct a triangulated monoidal Karoubi closed category with the Grothendieck
ring naturally isomorphic to the ring of integers localized at two.
\end{abstract}

\tableofcontents

\section{Introduction}

Natural numbers $\N$, integers $\Z$, rationals $\Q$, and real numbers $\R$ are basic
structures that belong to the foundations of modern mathematics.
The category $\Kvect$ of finite-dimensional vector spaces over a field $\K$ can be viewed
as a categorification of $\N=\{0,1,2, \dots \}$. The Grothendieck monoid of  $\Kvect$ is
naturally isomorphic to the semiring $\N$, via the map that sends the
image $[V]$ of a vector space $V$ in the monoid to its dimension, an element
of $\N$. Direct sum and tensor product of vector spaces
lift addition and multiplication in $\N$. Additive monoidal category $\Kvect$,
which linear algebra studies, is indispensable in modern mathematics and
its applications, even more so when the field $\K$ is $\R$ or $\CC$.

The ring of integers $\Z$ is categorified via the
category $\mathcal{D}(\Kvect)$ of complexes of vector spaces up to chain
homotopies, with finite-dimensional total cohomology groups.
The Grothendieck ring $K_0$ of the category $\mathcal{D}(\Kvect)$ is
isomorphic to $\Z$ via the map that takes a symbol $[V]\in K_0$ of
a complex $V$ in $\mathcal{D}(\Kvect)$ to its Euler characteristic
$\chi(V)$. The notion of cohomology and the use of category
$\mathcal{D}(\Kvect)$ is ubiquous in modern mathematics as well.

The category $\mathcal{D}(\Kvect)$ is triangulated monoidal, with the
Grothendieck ring $\Z$. Given the importance of the simplest categorifications
of $\N$ and $\Z$, which are behind the subjects of linear algebra and
cohomology, it is natural to ask if the two objects next in complexity
on the original list, rings $\Q$ and $\R$, can be categorified.
More precisely, are there monoidal triangulated categories with
Grothendieck rings isomorphic to $\Q$ and $\R$? A natural additional
requirement is for the categories to be idempotent complete (Karoubi closed).

This question was recently asked in~\cite[Problem 2.3]{Kh2} for $\Q$ and without the
idempotent completeness requirement, together with
a related and potentially simpler problem~\cite[Problem 2.4]{Kh2}
to categorify the ring
$\Zen$, the localization of $\Z$ given by inverting $n$.

In the present paper we present an idempotent complete categorification
of the ring $\Ztwo$. In Section~\ref{dg-section}
we construct a DG (differential graded) monoidal category $\C$.
The full subcategory $D^c(\C)$ of compact objects of the derived category $D(\C)$ is an idempotent complete triangulated monoidal category.
We then establish a ring isomorphism
\begin{equation}\label{ring-iso-eq}
K_0(D^c(\C)) \ \cong \ \Z\bigg[ \frac{1}{2}\bigg] .
\end{equation}
The category $\C$ is
generated by the unit object $\mb$ and an additional object $X$ and
has a diagrammatic flavor. Morphisms between tensor powers of $X$ are
described via suitable planar diagrams modulo local relations.
Diagrammatic approach to monoidal categories is common in quantum
algebra and categorification, and plays a significant role in the present
paper as well.

\vspace{0.1in}

Let us provide a simple motivation for the construction.
To have a monoidal category with the Grothendieck ring
$\Ztwo$ one would want an object $X$ whose image
$[X]$ in the Grothendieck ring is $\frac{1}{2}$. The simplest
way to try to get $[X]=\frac{1}{2}$ is via a direct sum
decomposition $\mb \cong X\oplus X $ in the category, since the symbol
$[\mb]$ of the unit object is the unit element $1$ of the Grothendieck
ring.

The relation $\mb\cong X \oplus X$ would imply that the endomorphism
algebra of $\mb$ is isomorphic to $\mathrm{M}_2(\End(X))$, the
$2\times 2$ matrix algebra with coefficients in the ring of endomorphisms
of $X$. This contradicts the property that the endomorphism ring of
the unit object $\mb$ in an additive monoidal category
is a commutative ring (or a super-commutative ring, for categories enriched
over super-vector spaces),
since matrix algebras are far from being commutative.

Our idea is to keep some form of the direct sum relation but rebalance to
have one $X$ on each side. Specifically, assume that the category
is triangulated and objects can be shifted by $[n]$. If there is a direct
sum decomposition
\begin{equation} \label{eq1}
  X \cong X[-1] \oplus \mb,
  \end{equation}
then in the Grothendieck ring there is a relation $[X] = -[X] + 1$,
equivalent to $2[X]=1$, that is, $X$ descends to the element $\frac{1}{2}$ in
the Grothendieck ring of the category.

Equation (\ref{eq1}) can be implemented by requiring mutually-inverse
isomorphisms between its two sides. Each of these isomorphisms has two components: a morphism between $X$ and its shift $X[-1]$, and a morphism
between $X$ and $\mb$, for the total of four morphisms, see Figure \ref{sec3fig5}.
Two morphisms between $X$ and $X[-1]$ of cohomological degrees zero give rise to two endomorphisms of $X$ of cohomological degrees $1$ and $-1$.
It results in a
DG (differential graded) algebra structure on the endomorphism ring of
$1\oplus X$, but with the trivial action of the differential. The four morphisms and relations on them
are represented graphically in Figures~\ref{sec3fig1} and \ref{sec3fig2} at the beginning of Section~\ref{dg-section}. These generators and relations give rise to a monoidal
DG category $\C$ with a single generator $X$ (in addition to
the unit object $\mb$). This category is studied in Section~\ref{dg-section},
where it is also explained how it gives rise to a triangulated monoidal idempotent complete category $D^c(\C)$.

Our main result is Theorem~\ref{thm main} in Section~\ref{Sec K}
stating
that the Grothendieck ring of $D^c(\C)$ is isomorphic to $\Ztwo$, when the ground field $\K$
has characteristic two.
To prove this result, we compute the Grothendieck group of the DG algebra
$A_k$ of endomorphisms of $X^{\otimes k}$ for all $k$.
Our computation requires determining the first K-groups of certain DG algebras and ends up being quite tricky.
Higher $K$-theory of DG algebras and categories
appears to be a subtle and difficult subject,
where even definitions need to be chosen carefully for basic computational
goals. This is the perception of the authors of the present paper, who only dipped their
toes into the subject.

In the proof we eventually need to specialize to
working over a field $\K$
of characteristic two, but the category is defined over
any field and the isomorphism (\ref{ring-iso-eq}) is likely to hold over any
field as well.

Section~\ref{basis-section} is devoted to preliminary work, used
in later sections, to construct a pre-additive monoidal category generated
by a single object $X$ (and the unit object $\mb$) subject
to additional restrictions that the endomorphism ring of $\mb$ is
the ground field $\K$ and the composition map
$$\Hom(X,\mb) \otimes_{\K} \Hom(\mb,X) \ \lra \ \Hom(X,X)$$
is injective. Such a structure can be encoded by the ring $A$ of
endomorphisms of $\mb\oplus X$ and the idempotent $e\in A$ corresponding
to the projection of $\mb\oplus X$ onto $\mb$ subject to
conditions that $eAe\cong \K$ and the multiplication
$(1-e)Ae \otimes eA(1-e) \lra (1-e)A(1-e)$ is injective.
We show that this data does generate a  monoidal category,
give a diagrammatical presentation for this category, and provide
a basis for the hom spaces $\Hom(X^{\otimes n}, X^{\otimes m})$.
From the diagrammatic viewpoint, categories of these type
are not particularly complicated, since the generating morphisms are
given by labels on single strands and labels on strands ending or
appearing inside the diagrams. No generating morphisms go between
tensor products of generating objects, which may give rise to complicated
networks built out of generating morphisms.

Our categorification of $\Ztwo$ in Section~\ref{dg-section} and
a conjectural categorification of $\Zen$ in Section~\ref{leavitt-section}
both rely on certain instances of $(A,e)$ data and on the monoidal
categories they generate. The construction of Section~\ref{dg-section} requires
us to work in the DG setting,  with the associated
triangulated categories and with the complexity of the
structure mostly happening on the homological side.

\vspace{0.05in}

In Section~\ref{leavitt-section} we propose another approach to
categorification of $\Ztwo$ and $\Zen$, based on an alternative way to
stabilize the impossible isomorphism  $\mb \cong X \oplus X$.
One would want an isomorphism
\begin{equation}\label{stabilize-iso-eq}
  X\cong  \mb \oplus X \oplus X \oplus X,
  \end{equation}
which does not immediately contradict $\End(\mb)$
being commutative or super-commutative.
We develop this approach in Section~\ref{leavitt-section}.
Object $X$ can be thought of as categorifying $-\frac{1}{2}$.
There are no
shift functors present in isomorphism (\ref{stabilize-iso-eq})
and it is possible to work
here with the usual $K_0$ groups of algebras. Interestingly, we
immediately encounter {\em Leavitt path algebras}, that have
gained wide prominence in ring theory, operator algebras and related fields
over the last decade, see~\cite{A} and references therein.

The Leavitt algebra $L(1,n)$ is a universal ring $R$ with the property that
$R\cong R^{n}$ as a left module over itself~\cite{L},
that is, the rank one free $R$-module is isomorphic to the rank $n$ free module.
Such an isomorphism is encoded by the entries of an $n\times 1$ matrix
$(x_1, \dots, x_n)^T$, giving a module map $R\lra R^n$, and the entries of
the $1\times n$ matrix $(y_1, \dots, y_n)$, giving a map $R^n\lra R$,
with $x_i,y_i$'s elements of $R$.
These maps being mutually-inverse isomorphisms produces a system of
equations on $x_i$'s and $y_i$'s, and the Leavitt algebra $L(1,n)$ is
the quotient of the free algebra on the $x_i$'s and $y_i$'s by these
relations. Leavitt algebras have exponential growth and are not noetherian.
They satisfy many remarkable properties and have found various
applications~\cite{A}. The relation to equation (\ref{stabilize-iso-eq})
is that, when ignoring the unit object (by setting any morphism
that factors through $\mb$ to zero), one would need an
isomorphism $X\cong X^3$, and the six endomorphisms of $X$ giving rise
to such an isomorphism satisfy the Leavitt algebra $L(1,3)$ defining relations.

Natural generalizations of the Leavitt algebras include Leavitt path
algebras~\cite{A} and Cohn-Leavitt algebras~\cite{AG,A} which can be encoded via
oriented graphs.
One can think of these algebras as categorifying certain systems of homogeneous linear equations with non-negative integer coefficients, see~\cite[Proposition 9]{A}, \cite{ArBrCo}.

For the category in Section~\ref{leavitt-section},
the algebra of endomorphisms of the direct sum $\mb \oplus X$
is a particular Leavitt path algebra $L(Q)$ associated to the
graph $Q$ given by
\begin{align} \label{quiver1}
\xymatrix{
X \ar@(ul,ur)[]^{1}  \ar@(dl,ul)[]^{2} \ar@(dr,dl)[]^{3} \ar[r] & Y
}
\end{align}
The Leavitt path algebra $L(Q)$ categorifies the
linear equation $x= 3x+y$. Quotient of this algebra by the two-sided
ideal generated by the idempotent of projecting $X\oplus Y$ onto $Y$
is isomorphic to the Leavitt algebra $L(1,3)$.

The construction of Section~\ref{leavitt-section} can be viewed
as forming a \emph{monoidal envelope}
of the Leavitt path algebra $L(Q)$, where $Y$ is set to be the unit object $\mb$.
Endomorphisms of $\mb\oplus X$ are encoded by $L(Q)$, and these endomorphism
spaces are then extended to describe morphisms between arbitrary
tensor powers of $X$.
Passing from certain Leavitt path algebras to monoidal categories can perhaps
be viewed as categorifications of quotients of free algebras by
certain systems of inhomogeneous linear equations with non-negative integer coefficients imposed on generators of free algebras.

We come short of proving that the Grothendieck ring of the associated
idempotent completion is indeed $\Ztwo$. The obstacle is in not knowing K-groups
$K_i(L(1,3)^{\otimes k})$ of tensor powers of $L(1,3)$ for $i=0,1$, see Conjecture \ref{conj lk}.
This problem is discussed in~\cite{ArBrCo}, but the answer is not known for general $k$.

Equation (\ref{stabilize-iso-eq}) admits a natural generalization to
\begin{equation}
  X\cong  \mb \oplus X^{n+1},
  \end{equation}
where the right hand side contains $n+1$ summands $X$.
Now $X$ plays the role of categorified $-\frac{1}{n}$.
In Section \ref{Sec Zen} we construct an additive monoidal Karoubi closed category in which the isomorphism above holds
and conjecture that its Grothendieck ring is isomorphic to $\Zen$, see Conjecture \ref{conj n}.

\vspace{0.1in}
The DG ring $A$ of endomorphisms of $\mb \oplus X$ that appears in
our categorification of $\Ztwo$ in Section~\ref{dg-section}
is also a Leavitt path algebra $L(T)$, of
the Toeplitz graph $T$ given by
\begin{align} \label{quiver2}
\xymatrix{
X \ar@(dl,ul)[] \ar[r] & Y
}
\end{align}
see~\cite[Example 7]{A}.
Considering $A$ as a Leavitt path algebra, one should
ignore the grading of $A$ and its structure as a DG algebra.
Leavitt path algebra of the Toeplitz graph $T$ is isomorphic to
the Jacobson algebra~\cite{J}, with generators
$a,b$ and defining relation $ba=1$, making $a$ and $b$ one-sided
inverses of each other.
The linear equation
categorified by this algebra is $x=x+y$, lifted
to an isomorphism of projective modules $X\cong X\oplus Y$. In the
Grothendieck group of this Leavitt path algebra $[Y]=0$.
In the monoidal envelope where $Y$ is the unit object $\mb$, an isomorphism $X\cong X\oplus \mb$ would imply the Grothendieck ring is zero, since $1=[\mb]=0$.
These problems are avoided by introducing a
shift into the isomorphism as in equation (\ref{eq1}), and consequently working in the DG framework, as explained earlier. Choice of $[-1]$ over $[1]$ is inessential, see Remark~\ref{remark-change-sign}.

\vspace{0.1in}

Having a monoidal structure or some close substitute is a natural requirement
for a categorification of $\Zen$ and $\Q$, emphasized in~\cite{Kh2}. The
direct limit $D=M_{n^{\infty}}(\K)$ of matrix algebras $M_{n^k}(\K)$ under
unital inclusions $M_{n^k}(\K)\subset M_{n^{k+1}}(\K)$ has the Grothendieck
group $K_0$ of finitely-generated projective modules naturally isomorphic
to the abelian group $\Zen$, see~\cite[Exercise 1.2.7]{Ro}. The isomorphism
is that of groups, not rings. Similar limits give algebras with $K_0$
isomorphic to any subgroup of $\Q$.

Phillips~\cite{Ph} shows that the algebra $D$ is algebraically strongly selfabsorbing,
that is, there is an isomorphism $D\cong D\otimes_{\K} D$ which is
algebraically approximately similar to the inclusion
$D\cong D\otimes 1\subset D\otimes_{\K}D$.
This isomorphism allows to equip $K_0(D)$ with a ring
structure, making $K_0(D)$ isomorphic to $\Zen$ as a ring, and
likewise for the other subrings of $\Q$, see~\cite{Ph}. We are not
aware of any
monoidal structure or its close substitute on the category of
finitely-generated projective $D$-modules that would induce
the Phillips ring structure on $K_0(D)$.

Barwick et al.~\cite{BGHNS} construct triangulated categories (and
stable $\infty$-categories) with Grothendieck groups isomorphic
to localizations $S^{-1}\Z$ of $\Z$ along any set $S$ of
primes, as well as more general localizations. For these localizations a
monoidal or some tensor product structure on the underlying categories
does not seem to be present, either,
to turn Grothendieck groups into rings.

\vspace{0.1in}

\n{\bf Acknowledgments.} 
The authors are grateful to the referee for a suggestion leading to Remark \ref{rmk char}.
Thank Bangming Deng, Peter Samuelson and Marco
Schlichting for helpful conversations.
M.K. was partially supported by the NSF grant DMS-1406065 while working on the paper. Y.T. was partially supported by
the NSFC 11601256.


\section{A family of monoidal categories via arc diagrams}
\label{basis-section}

\n {\bf A monoidal category from $(A,e)$.}
For a field $\K$, let $A$ be a unital $\K$-algebra and $e\in A$ an idempotent such
that $eAe\cong \K$ and the multiplication map
$$ (1-e)Ae \otimes eA(1-e) \stackrel{m'}{\longrightarrow} (1-e)A(1-e),$$
denoted $m'$, is injective. Another notation for $m'(b\otimes c)$ is
simply $bc$. Let
$$ A' \ = \mathrm{im}(m')\subset (1-e)A(1-e)$$
and choose a $\K$-vector subspace $A''$ of $A$ such that
 $(1-e)A(1-e)= A' \oplus A''$.

We fix bases $\BB_{1,0}$, $\BB_{0,1}$, and $\BB_{1,1}(1)$ of vector spaces
$(1-e)Ae$, $eA(1-e)$, and $A''$, respectively.
The subscript $0$ corresponds
to the idempotent $e$, and the subscript $1$ corresponds to the complementary idempotent
$1-e$.
The notation $\BB_{1,1}(1)$ will be explained later in (\ref{eq b11}).
We also choose $\BB_{0,0}$ to be a one-element set consisting of
any nonzero element of $\K\cong eAe$ (element $1$ is a natural choice).
The set $\BB_{1,0} \times \BB_{0,1}$ of elements $ bc$, over all $b\in \BB_{1,0}$ and $c\in \BB_{0,1},$ is naturally
a basis of $A'$. The union $\BB_{1,1}(1) \sqcup (\BB_{1,0} \times \BB_{0,1})$ gives a basis of $(1-e)A(1-e)$.
Choices of $A''$ and $\BB_{1,0}$, $\BB_{0,1}$, $\BB_{1,1}(1)$ are not needed in the
definition of category $\C$ below.

To a pair $(A,e)$ as above we will assign a $\K$-linear pre-additive strict monoidal category $\C=\C(A,e)$.
Objects of $\C$ are tensor powers $X^{\otimes n}$ of the generating object $X$. The unit
object $\mb = X^{\otimes 0}$. The algebra $A$ describes the ring of endomorphisms of the
object $\mb \oplus X$.
Slightly informally, we write $A$ in the matrix notation
\begin{equation}\label{eq-matrixA}
 A = \left( \begin{array}{cc} eAe & eA(1-e) \\
    (1-e)Ae & (1-e)A(1-e) \end{array} \right) .
\end{equation}
meaning, in particular, that, as a $\K$-vector space, $A$ is the
direct sum of the four matrix entries, and the  multiplication $A\otimes A \stackrel{m'}{\lra} A$  in $A$
reduces to matrix-like tensor product maps between the entries.
The two diagonal entries are subalgebras, via nonunital inclusions.
We declare (\ref{eq-matrixA}) to be the matrix of homs between the
summands of the object $\mb\oplus X$ in $\C$:
$$  \left( \begin{array}{cc} \Hom_{\C}(\mb,\mb) & \Hom_{\C}(\mb,X) \\
    \Hom_{\C}(X,\mb) & \Hom_{\C}(X,X) \end{array} \right)
    \ = \
    \left( \begin{array}{cc} eAe & eA(1-e) \\
   (1-e)Ae & (1-e)A(1-e) \end{array} \right) .$$

The algebra $eAe= \K$ is the endomorphism ring of the unit object $\mb$.
The $\K$-vector space $(1-e)Ae$ is $\Hom_{\C}(\mb, X)$, while
$\Hom_{\C}(X,\mb) = eA(1-e) $, and the ring $\End_{\C}(X) = (1-e)A(1-e).$

The algebra $A$ can be used to generate a vector space of morphisms
between tensor powers of $X$, by tensoring and composing morphisms
between the objects $\mb$ and $X$, and imposing only the relations
that come from the axioms of a strict monoidal pre-additive category.
The only nontrivial part, as explained below, is to check that the
category does not degenerate, that is, the
hom spaces in the resulting category have the expected sizes (bases).

\begin{figure}[h]
\includegraphics[height=2.3cm]{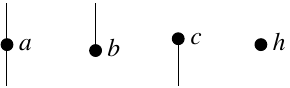}
\caption{Presentation of morphisms, boundary of $\R\times [0,1]$ not shown}
\label{generatorsfig}
\end{figure}

Before considering diagrammatics for morphisms between tensor powers of
$X$, we start with diagrams that describe homs between $\mb$ and $X$. We draw the diagrams inside a strip $\R\times [0,1]$.
An endomorphism $a$ of $X$ ($a$ is an element of $(1-e)A(1-e)$)
is depicted by a vertical line that starts and ends on the boundary of the strip and in the middle carries a dot labeled $a$, see Figure~\ref{generatorsfig}.
We call such a line a \emph{long strand} labeled by $a$.
An element $b\in \Hom_{\C}(\mb,X)=(1-e)Ae$ is depicted by a \emph{short top strand} labeled by $b$. The top endpoint of a short top strand is at the boundary.
An element $c\in \Hom_{\C}(X,\mb)=eA(1-e)$
is depicted by a \emph{short bottom strand} labeled by $c$. Its bottom endpoint is at the boundary of the strip. Each short strand has two endpoints: the boundary endpoint (either at the top or bottom of the strand), and the \emph{floating} endpoint, which is a labeled dot.
An endomorphism $h$ of
the identity object $\mb$ is depicted by a dot,
labeled by $h$, in the middle of the plane (in our case, these
endomorphisms are elements of the ground field $\K$). These four types of
diagrams are depicted in Figure~\ref{generatorsfig}.

  \begin{figure}[h]
    \includegraphics[height=5cm]{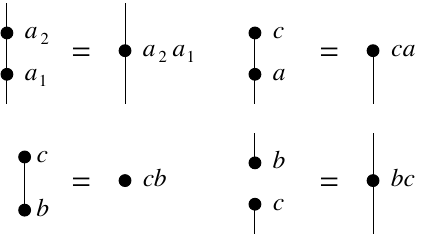}
    \caption{Graphical presentation of composition in A}
  \label{rel1fig}
   \end{figure}

Vertical concatenation of diagrams corresponds to the composition of
morphisms, as depicted in Figure~\ref{rel1fig}.
For instance, if an element of $(1-e)A(1-e)$ factors as $bc$, for
$b\in (1-e)Ae$ and $c\in eA(1-e)$, we can depict it as a composition
of a top strand with label $b$ and a bottom strand with label $c$, see the lower right equality above.

Addition of alike diagrams is given by adding their labels, see examples in
Figure~\ref{rel2fig} for adding elements of $(1-e)A(1-e)$ and $(1-e)Ae$.

\begin{figure}
  \includegraphics[height=1.5cm]{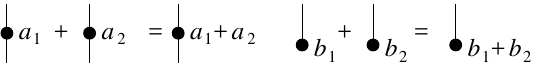}
  \caption{Adding diagrams}
  \label{rel2fig}
\end{figure}

Likewise, scaling a diagram by an element of $\K$ corresponds to
multiplying its label by that element.
An element $a\in (1-e)A(1-e)$ decomposes uniquely $a=a'+a''$,
where $a'\in A'=\mathrm{im}(m'), a''\in A''$. Furthermore, $a'$ admits a (non-unique)
presentation $a' \ = \ \sum_{i=1}^k  b_i c_i,$ $ b_i \in (1-e)Ae,$
$c_i \in eA(1-e),$ see Figure~\ref{rel3fig} for diagrammatic expression.

\begin{figure}
  \includegraphics[height=2.0cm]{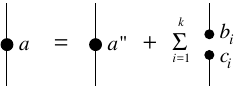}
  \caption{Decomposition of an element in $(1-e)A(1-e)$}
  \label{rel3fig}
\end{figure}

The algebra $A$ has a basis given by diagrams in Figure~\ref{generatorsfig}
over all $a\in \BB_{1,1}(1) \sqcup (\BB_{1,0} \times \BB_{0,1})$, $b\in \BB_{1,0},$ $c\in \BB_{0,1}$, and $h\in \BB_{0,0}$
(recall that $\BB_{0,0}$ has cardinality one).
Vertical line without a label denotes the idempotent $1-e$. This idempotent does not have to lie in $A''$, but we usually choose $A''$ to contain $1-e$ and a basis $\BB_{1,1}(1)$ of $A''$  to contain $1-e$ as well (also see Example 2 below).

These diagrammatics for $A$ extend to diagrammatics for a monoidal
category with the generating object $X$ and algebra $A$ describing the endomorphisms
of $\mb\oplus X$.
Morphisms from $X^{\otimes n}$ to $X^{\otimes m}$ are $\K$-linear
combinations of diagrams with $n$ bottom and $m$ top endpoints which
are concatenations of labeled long and short strands, as in the
figure below (where $n=5$ and $m=4$).

\begin{figure}
  \includegraphics[height=3.2cm]{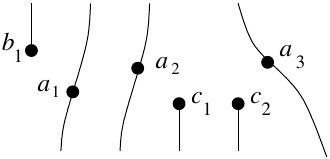}
  \caption{A diagram of long and short strands}
  \label{basis1fig}
\end{figure}

Defining relations are isotopies of these labeled diagrams rel boundary and the relations coming from the algebra $A$ as shown in Figures~\ref{rel1fig}-\ref{rel3fig}. Any floating strand (of the form $cb$ as in Figure~\ref{rel1fig}) reduces to a constant and can be removed by rescaling the
coefficient of the diagram.

\vspace{0.2cm}

\n{\bf A basis for homs.}
We now describe what is obviously a spanning set of $\Hom_{\C}(X^{\otimes n}, X^{\otimes m}).$
Ignoring labels, an isotopy class of a diagram of long and short strands as
in Figure~\ref{basis1fig} corresponds to a partial
order-preserving bijection
\begin{equation}\label{eq-partialB}
f \ : \ [1,n] \ \longrightarrow \ [1,m] .
\end{equation}
Here $[1,n] =\{ 1, \dots, n\}$, viewed as an ordered set with the
standard order. A partial bijection $f: X \longrightarrow Y$ is a
bijection from a subset $L_f$ of $X$ to a subset of $Y$, and
order-preserving means $X,Y$ are ordered sets, and $f(i)<f(j)$ if $i<j$
and $i,j\in L_f$.
Let $L_f=\{i_1, \dots, i_{|f|}\}\subset [1,n]$, where $i_1< i_2< \dots i_{|f|}$.
Here $|f|$ denotes the cardinality of $L_f$,
which we also call the width of
$f$. Denote by $\Bi_{m,n}$ the set of all partial order-preserving bijections
(\ref{eq-partialB}).

A Long strand, counting from left to right, connects the $k$-th element $i_k\in [1,n]$ of
$L_f$, viewed as a point on the bottom edge of the strip, to $f(i_k)\in [1,m]$,
viewed as a point on the top edge. Elements in
$[1,n]\setminus L_f$ are the lower endpoints of the short bottom
strands. Elements of $[1,m]\setminus f(L_f)$ are the upper endpoints of the short top strands.

In Figure~\ref{basis1fig} partial bijection $f:[1,5]\lra [1,4]$ has $L_f=\{ 1, 2, 5\},$ and
$f(1)=2, f(2)=3, f(5)=4$.

\begin{figure}
  \includegraphics[height=4.2cm]{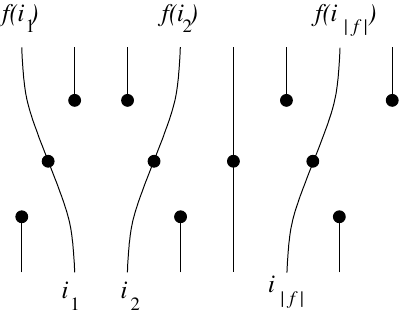}
  \caption{Partial bijection diagram $D_f$}
  \label{basis2fig}
\end{figure}

For each bijection $f$ choose a diagram in the
isotopy class of diagrams representing this bijection and add a dot to each
long strand, see Figure~\ref{basis2fig}. Each short strand already
has a dot at its floating endpoint. Denote this diagram $D_f$.

Partial bijection $f$ has $|f|$ long strands, $n-|f|$
bottom strands and $m-|f|$ top strands.
Let $\BB_f$ be the following set of
elements of $\Hom_{\C}(X^{\otimes n},X^{\otimes m})$.
To the floating endpoint of each bottom strand assign an element of $\BB_{0,1}$ and
denote these elements $c_1, \dots , c_{n-|f|}$ from left to right. To the
floating endpoint of each top strand assign an element of $\BB_{1,0}$ and denote these
elements $b_1, \dots, b_{m-|f|}$ from left to right. To the dot at each
long strand assign an element of $\BB_{1,1}(1)$ (recall that $\BB_{1,1}(1)$ is a
basis of $A''$) and denote them $a_1, \dots, a_{|f|}$. Figure~\ref{basis3fig}
depicts an example with $n=6$, $m=7$ and $|f|=3$.

\begin{figure}
  \includegraphics[height=4.2cm]{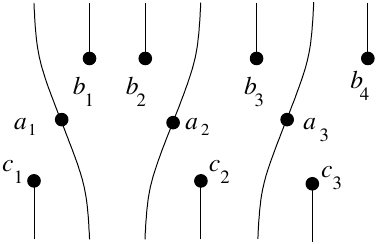}
  \caption{A labeled partial bijection diagram $d$}
  \label{basis3fig}
\end{figure}

The set $\BB_f$ of labeled diagrams for a given $f$ is
naturally parametrized by the set
$$ (\BB_{1,0})^{m-|f|} \times (\BB_{1,1}(1))^{|f|} \times
(\BB_{0,1})^{n-|f|},$$
and each labeled diagram $d$ in $\BB_f$ gives rise
to an element of $\Hom_{\C}(X^{\otimes n},X^{\otimes m})$, also denoted $d$.

\begin{thm}\label{basisthm} The $\K$-vector space $\Hom_{\C}(X^{\otimes n},X^{\otimes m})$ has a basis of labeled diagrams
$$ \BB_{m,n} \ = \ \bigsqcup_{f\in \Bi_{m,n}} \BB_f $$
and is naturally isomorphic to the space
$$ \oplusop{f\in \Bi_{m,n}} ((1-e)Ae)^{\otimes(m-|f|)}\otimes  (A'')^{\otimes |f|} \otimes (eA(1-e))^{\otimes(n-|f|)} .$$
\end{thm}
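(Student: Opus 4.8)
The plan is to show that the obvious spanning set $\BB_{m,n}$ is in fact linearly independent by constructing, for the category $\C$, an explicit model (a ``concrete'' pre-additive monoidal category $\C_0$) whose hom spaces are \emph{defined} to be the claimed direct sums, and then verifying that $\C_0$ satisfies all the defining relations of $\C$. Since $\C$ is the universal (freely generated) pre-additive strict monoidal category on the data $(A,e)$, there is a canonical full functor $\C \to \C_0$ sending the diagrammatic generators to their counterparts in $\C_0$; composing with the tautological surjection from the free construction shows the spanning set maps to a genuine basis, hence was a basis to begin with. This is the standard ``build a model to prove a diagrammatic basis theorem'' strategy.

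Concretely, I would \textbf{first} define $\C_0$ on objects $X^{\otimes n}$, $n \ge 0$, with
\[
\Hom_{\C_0}(X^{\otimes n}, X^{\otimes m}) \ = \ \bigoplus_{f \in \Bi_{m,n}} ((1-e)Ae)^{\otimes(m-|f|)} \otimes (A'')^{\otimes|f|} \otimes (eA(1-e))^{\otimes(n-|f|)},
\]
thinking of a summand as a partial-bijection diagram $D_f$ with a top-strand label in $(1-e)Ae$, a long-strand label in $A''$, and a bottom-strand label in $eA(1-e)$, read left to right. \textbf{Second}, I would define composition $\Hom_{\C_0}(X^{\otimes m}, X^{\otimes p}) \otimes \Hom_{\C_0}(X^{\otimes n}, X^{\otimes m}) \to \Hom_{\C_0}(X^{\otimes n}, X^{\otimes p})$ by stacking diagrams: the underlying partial bijection is the composite partial bijection $g \circ f$; strands that connect through multiply their $A$-labels using the multiplication of $A$; a top strand of the lower diagram meeting a bottom strand of the upper diagram produces a floating circle whose value $cb \in eAe \cong \K$ is absorbed as a scalar (using $eAe \cong \K$); and whenever a product of labels lands in $(1-e)A(1-e)$ one must re-expand it along the fixed decomposition $(1-e)A(1-e) = A' \oplus A''$ with $A' \cong (1-e)Ae \otimes eA(1-e)$ — this is exactly where injectivity of $m'$ is used, so that the re-expansion is well defined and the tensor-factor bookkeeping is unambiguous. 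The monoidal structure on $\C_0$ is horizontal juxtaposition of diagrams, which on partial bijections is the obvious ``side by side'' operation on $\Bi$.

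\textbf{Third}, I would check the axioms: bilinearity and $\K$-linearity of composition are built in; associativity of composition reduces to associativity of multiplication in $A$ together with the compatibility of the $A' \oplus A''$ splitting with triple products (again a consequence of $m'$ being injective and $eAe \cong \K$ controlling the floating circles); the interchange law between $\otimes$ and $\circ$ is immediate from the planar nature of the diagrams; and unitality is clear. This makes $\C_0$ a $\K$-linear pre-additive strict monoidal category in which $\End_{\C_0}(\mb) \cong \K$, $\Hom_{\C_0}(\mb,X) \cong (1-e)Ae$, etc., and in which the diagrammatic relations of Figures~\ref{rel1fig}--\ref{rel3fig} hold by construction. \textbf{Fourth}, the universal property of $\C$ (it is generated by $X$ and by $A = \End(\mb \oplus X)$ modulo only the pre-additive monoidal axioms) yields a strict monoidal functor $F \colon \C \to \C_0$ which is the identity on objects and sends each labeled diagram to the corresponding element; by inspection $F$ sends the spanning set $\BB_{m,n}$ of $\Hom_\C(X^{\otimes n},X^{\otimes m})$ bijectively onto the tautological basis of $\Hom_{\C_0}(X^{\otimes n},X^{\otimes m})$. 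Since $\BB_{m,n}$ already spans, $F$ is an isomorphism on all hom spaces and $\BB_{m,n}$ is a basis.

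The main obstacle is \textbf{Step two/three}: making the composition in $\C_0$ genuinely well defined and associative. The subtlety is that an arbitrary morphism is built by stacking, and each stacking can create products of $A$-elements that fall into $(1-e)A(1-e)$ and must be re-sorted into the $A' \oplus A''$ normal form, with $A'$-contributions re-split as sums of $bc$ products; one must verify that doing these re-splittings in different orders (as forced by different bracketings of a long composite of diagrams) gives the same element of the direct sum, and that floating circles evaluated via $eAe \cong \K$ can be pulled out as scalars consistently. This is precisely the ``non-degeneracy'' point flagged in the text before the theorem, and injectivity of $m'$ (equivalently, $A' \cong (1-e)Ae \otimes_{\K} eA(1-e)$ canonically) is what makes it go through. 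Once that bookkeeping is nailed down, everything else is formal.
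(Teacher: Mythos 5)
Your proposal is correct and follows essentially the same route as the paper: the paper likewise builds the model directly, defining the idempotented algebra $\K\BB = \oplus_{m,n}\K\BB_{m,n}$ with composition given by stacking diagrams, evaluating floating strands via $eAe\cong\K$, and re-expanding long-strand products along $(1-e)A(1-e)=A'\oplus A''$, with associativity reduced to associativity in $A$ by localizing the simplification at each concatenation point, and then concludes that the resulting category is isomorphic to $\C$ so the spanning set is a basis. Your identification of the well-definedness and associativity of the re-splitting as the crux, hinging on injectivity of $m'$, matches the paper's argument exactly.
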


The notation $\BB_{m,n}$ is compatible for $(m,n)=(1,0), (0,1), (0,0)$ with the notation at the beginning of Section~\ref{basis-section}.

\begin{proof}
We first observe that vector spaces $\K \BB_{m,n}$ admit multiplications
\begin{equation}\label{eq-associative}
 \K \BB_{k,m} \otimes \K \BB_{m,n} \ \lra \ \K \BB_{k,n}
 \end{equation}
that turn the direct sum
\begin{equation}\label{eq-bigRing}
 \K \BB \ := \ \oplusop{n,m\in \N} \K \BB_{m,n}
 \end{equation}
into an idempotented (nonunital) associative algebra, where
$\N =\{0,1,2,\dots\}$.
To compute the product $h_2 h_1 \in \K \BB_{k,n}$ for
$h_2\in \BB_{k,m}$, $h_1\in \BB_{m,n}$ we concatenate the
diagrams $h_2 h_1$ into a single diagram. Every floating strand
in $h_2 h_1$ evaluates to a scalar in $\K$. Long strands in $h_2 h_1$
are concatenations of pairs of long strands in $h_2$, $h_1$, each
carrying a label, say $a_2,a_1 \in \BB_{1,1}(1)$. The concatenation carries the label $a_2a_1 \in (1-e)A(1-e)$ and is simplified as
in Figure~\ref{rel3fig}, with $a=a_2a_1$ on the left hand side.
The right hand side term $a''$ in Figure~\ref{rel3fig}  further decomposes
into a linear combination of elements of $\BB_{1,1}(1)$, and the terms in the sum into
linear combinations of elements of $\BB_{1,0}\times \BB_{0,1}$.

Concatenation of a long strand and a short (top or bottom) strand results in a short (top or bottom)
strand that carries the product label, see the right half of Figure~\ref{rel1fig}. That label is a linear combination of elements in $\BB_{1,0}$, in the top strand case, and elements of $\BB^{0,1}$, in
the bottom strand case.

The simplification procedure is consistent and results in a well-defined element $h_2 h_1$
of $\K \BB_{k,n}$. Associativity of multiplications (\ref{eq-associative}), resulting in well-defined maps
$$ \K\BB_{r,k}\otimes \K \BB_{k,m} \otimes \K \BB_{m,n} \ \lra \ \K \BB_{k,n}$$
for all $r,k,m,n$, follows from the observation that the computation of $h_2 h_1$ can be localized along each concatenation point. Simplification of each pair of strands along their concatenation point can be done independently, and the resulting elements of $A$, interpreted as diagrams, can then be tensored (horizontally composed) to yield $h_2 h_1$.
In this way associativity of (\ref{eq-associative}) follows from associativity of multiplication in  $A$. Since the multiplication is consistent, $\K \BB$ carries an associative non-unital algebra structure.

A substitute for the unit element is a system of idempotents in $\K\BB$.
Denote by $1_n$ the element of $\K \BB_{n,n}$ given by
$n$ parallel vertical lines without labels.
In particular, $1_0=e$.
The diagram $1_n$ is an idempotent corresponding
to the identity endomorphism of $X^{\otimes n}$,
and for any $a\in \BB_{n,m}, b\in \BB_{m,n}$ the products $1_n a  = a$, $b 1_n = b$.
The diagram $1_n$ is the horizontal concatenation of $n$ copies of $1-e\in A$. In all cases considered in this paper $1-e\in \BB_{1,1}(1)$, but this is not necessary in general:  $1-e$ might not be in the basis $\BB_{1,1}(1)$ of $A''$, or even in $A''$.

Elements $1_n$, over all $n\ge 0$, constitute a local system of mutually-orthogonal
idempotents in $\K \BB$. For any finitely many elements $z_1,\dots, z_m$ of $\K\BB$ there
exists $n$ such that $z_i 1_n' = 1_n' z_i = z_i$ for $1\le i \le m$, where
$1_n' = 1_0+1_1+\dots + 1_n$. Non-unital algebra $\K\BB$ can be written as a
direct limit of unital algebras $1_n' \K \BB 1_n'$ under non-unital inclusions
$$ 1_n' \K \BB 1_n' \subset 1_{n+1}' \K \BB 1_{n+1}'.$$
We also refer to an algebra with a local system of idempotents as an \emph{idempotented algebra}.

Multiplication in $\K\BB$ corresponds to vertical concatention of labeled diagrams, and is compatible with the
horizontal concatenation (tensor product) of diagrams, giving us
maps
$$  \K \BB_{m,n} \otimes \K \BB_{m',n'} \ \lra \ \K \BB_{m+m',n+n'}$$
and producing a monoidal category, denoted $\underline{\C}$, with
a single generating object $\underline{X}$ and $\K \BB_{m,n}$ the
space of homs from $\underline{X}^{\otimes n}$ to $\underline{X}^{\otimes m}$. Injectivity of multiplication in
$\underline{\C}$ implies that $\underline{\C}$ is equivalent (and even isomorphic) to $\C$, and that sets $\BB_{m,n}$ are indeed bases of homs in $\C$.

The subalgebra
$$\K\BB_{\le 1} \ = \ \K \BB_{0,0} \oplus \K \BB_{0,1}\oplus \K \BB_{1,0}\oplus \K \BB_{1,1}$$
of $\K \BB$ is naturally isomorphic to $A$.
\end{proof}

Thus, the space $\Hom_{\C}(X^{\otimes n},X^{\otimes m})$ is a direct
sum over all order-preserving partial bijections $f:[1,n]\longrightarrow
[1,m]$ of vector spaces $$ ((1-e)Ae)^{\otimes (m-|f|)} \otimes
(A'')^{\otimes |f|} \otimes (eA(1-e))^{\otimes (n-|f|)}. $$
Denote by $\BB_{m,n}(\ell)$ the subset of $\BB_{m,n}$
corresponding to partial bijections $f$ with $|f|=\ell$:
\begin{align} \label{eq b11}
\BB_{m,n}(\ell) \ = \ \bigsqcup_{|f|=\ell} \BB_f.
\end{align}
These are all basis diagrams with $\ell$ long strands.
The notation $\BB_{m,n}(\ell)$ is compatible with the notation $\BB_{1,1}(1)$ at the beginning of Section \ref{basis-section}.
We have $\BB_{1,1}=\BB_{1,1}(0) \sqcup \BB_{1,1}(1)$, and there is a natural bijection $\BB_{1,1}(0) \cong \BB_{1,0} \times \BB_{0,1}$.

\vspace{0.2cm}

\n {\bf Additive, idempotent complete extension.}
From $\C$ we can form its additive closure $\C^{add}$, with objects---finite directs sums of objects of $\C$.
Category $\C^{add}$ is a $\K$-linear additive strict monoidal category.
Furthermore, let $\Ka(\C)$ be the Karoubi closure of $\C^{add}$.
Category $\Ka(\C)$ is an
idempotent complete $\K$-linear additive strict monoidal category.
Its Grothendieck group $K_0(\Ka(\C))$ is naturally a unital associative ring under
the tensor product operation, and there is a natural homomorphism
$$ \Z[x] \ \longrightarrow K_0(\Ka(\C)) $$
from the ring of polynomials in a variable $x$ to its Grothendieck group,
taking $x$ to $[X]$, the symbol of the generating object $X$ in the Grothendieck
group. The homomorphism may not be injective or surjective.

By an inclusion $\mathcal{A} \subset \B$ of categories we mean a fully
faithful functor $\mathcal{A} \lra \B$.
There is a sequence of categories and inclusion functors
$$ \C \lra \C^{add} \lra \Ka(\C).$$
Category $\C$ is preadditive. Category $\C^{add}$ is additive and
contains $\C$ as a full subcategory. Category $\Ka(\C)$ is additive, idempotent
complete, and contains $\C^{add}$ as a full subcategory. All three categories are
monoidal.

\vspace{0.2cm}

\n {\bf Examples.} We now provide some examples for the above construction.

\n\emph{Example 1:}
A special case of the monoidal category $\C$ appeared in~\cite{KS},
with the
sets $\BB_{1,0}$, $\BB_{0,1}$, and $\BB_{1,1}(1)$ all of cardinality one. Denoting elements of these
sets by $b,$ $c,$ and $(1)$, respectively, the algebra $A$ can be identified with
the Figure~\ref{quiverfig} quiver algebra
subject to the relation $cb=(0)$, where $(j)$, for $j\in \{0,1\}$, denotes the idempotent
path of length zero  at vertex $j$. Thus, the composition $cb$ equals the idempotent path
$(0)$ at the vertex $0$.
Algebra $A$ has a basis $\{ (0),(1),b,c,bc\}$. It's a semisimple algebra isomorphic to
the direct product $\mathrm{M}_2(\K)\times \K$, where the second factor is spanned by
the idempotent $(1)-bc$. The first factor has a basis $\{(0),b,c,bc\}$.

\begin{figure}[h]
  \includegraphics[height=2.0cm]{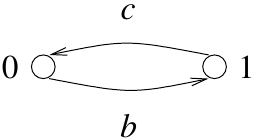}
  \caption{Quiver with vertices $0$, $1$ and arrows $b,c$ between them}
  \label{quiverfig}
\end{figure}

A basis of the hom space
$\Hom_{\C}(X^{\otimes n},X^{\otimes m})$ is given by partial bijections from $[1,n]$ to $[1,m]$,
with no additional decorations necessary. The idempotent completion
$\Ka(\C)$ of the additive closure $\C^{add}$
of $\C$ is semisimple, and $X\cong \mb \oplus X_1$, where the object $X_1=(X,1_X-bc)$ of
the idempotent completion comes
from the above idempotent $1_X-bc$. The simple objects, up to isomorphism, are all tensor
powers of $X_1$.
For this pair $(A,e)$ the natural homomorphism
$ \Z[x] \ \longrightarrow K_0(\Ka(\C)) $ is an isomorphism, see~\cite{KS}.

\vspace{0.1in}

\n\emph{Example 2:} Consider a special case when $1-e\in A'=\mathrm{im}(m').$ Then
$A'= (1-e)A(1-e)$ and $A''=0$. Choose a presentation
$1-e = \sum_{i=1}^n b_i c_i$ with $b_i\in (1-e)Ae,$ $c_i \in eA(1-e)$, and the smallest $n$.
Multiplying this equality by $b_j$ on the right implies $c_i b_j = \delta_{i,j}\in \K$.
Multiplying on the right by any element of $(1-e)Ae$ shows that $b_i$'s are a basis of
$(1-e)Ae$. Likewise, elements $c_i$ are a basis of $eA(1-e)$, and the pair $(A,e)$ is isomorphic
to the pair $(\mathrm{M}_{n+1}(\K), e_{11})$ of a matrix algebra of size $n+1$ and a
minimal idempotent in it. In the additive closure $\C^{add}$ of $\C$ (and in the
idempotent completion $\Ka(\C)$), the object $X$ is isomorphic
to $n$ copies of the unit object $\mb$. This degenerate case is of no interest to us.

Otherwise, $1-e$ is not in the subspace $A'$ of $(1-e)A(1-e)$,
and we can always choose $A''$ and $\BB_{1,1}(1)$ to contain
$1-e$, ensuring that the vertical line diagram is in the basis $\BB_{1,1}(1)$.

\vspace{0.05in}

\n{\bf Case when $A$ is a super algebra.}
We now discuss a generalization when $A$ is an algebra in the category of super-vector spaces. In that category the objects are $\Z/2$-graded and degree one summands are called odd components. Algebra $A$ must be $\Z/2$-graded, $A=A_0\oplus A_1$, with the idempotent $e\in A_0$ such that $eAe=\K$.
Then $1-e$ is also in $A_0$. Vector spaces $eA(1-e)$, $(1-e)Ae$,
$eAe$, and $(1-e)A(1-e)$ are then each a direct sum of its homogeneous components. For instance $eA(1-e) = eA_0(1-e) \oplus eA_1(1-e),$ with
$eA_i(1-e)$ being the degree $i$ component of $eA(1-e)$ for $i=0,1$.

We continue to require injectivity of $m'$. Subspace $A'=\mathrm{im}(m')$ is
$\Z/2$-graded, and we select its complement $A''$ to be graded as well.
All basis elements of $\BB_{0,0}, \BB_{0,1}, \BB_{1,0}, \BB_{1,1}(1)$ should be homogeneous (which is
automatic for $\BB_{0,0}$). This will result in a $\Z/2$-graded idempotented
algebra $\K\BB$ with homogeneous basis elements in $\BB$.

The monoidal category $\C$ that we assign to $(A,e)$ is enriched over the
category of super-vector spaces. For this reason, homogeneous morphisms
in $\C$ supercommute when their relative height order changes during an
isotopy, see
Figure~\ref{super2fig}, with the coefficient $(-1)^{|a|\cdot |b|}$, where
$|a|\in \{0,1\}$ is the $\Z/2$-degree of the generator $a$.

\begin{figure}[h]
\includegraphics[height=2.3cm]{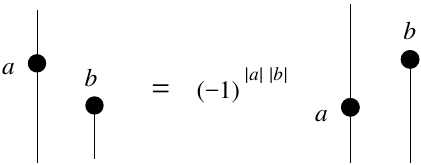}
\caption{Super-commutativity of morphisms}
\label{super2fig}
\end{figure}


In our construction of a basis of hom spaces
$\Hom_{\C}(X^{\otimes n},X^{\otimes m})=\K \BB_{m,n}$,
 we need to choose a particular order of
  heights when dealing with decorated basis diagrams, to avoid sign
  indeterminancy.
  The order that we follow is shown in Figure~\ref{super1fig}.
  Lengths of top short arcs increase going from left to right, so that
  $b_1$ label is at the highest position, followed by $b_2$, and so on.
  Highest long strand label $a_1$ is below the lowest $b$-label $b_{m-|f|}$.
  It's followed by $a_2$ to the right and below, all the way to $a_{|f|}$, which
  has the lowest height of all $a$ labels.
  Leftmost bottom arc label $c_1$ is lower than $a_{|f|}$ label, and the
  remaining bottom arc labels
  $c_2$, \dots, $c_{n-|f|}$ continue with the lower heights. The lowest label
  in the diagram is $c_{n-|f|}$.

\begin{figure}[h]
\includegraphics[height=5cm]{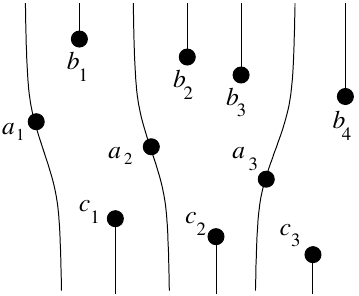}
\caption{Keeping track of heights of labels,  left to right and top to bottom}
\label{super1fig}
\end{figure}

Proof of Theorem~\ref{basisthm} extends without any changes, simply by confirming the consistency of signs in several places. The theorem implies that there is no collapse in the size of homs between tensor powers of $X$ and gives a basis
for the space of morphisms from $X^{\otimes n}$ to $X^{\otimes m}$.

\vspace{0.05in}

\n {\bf Case when $A$ is a DG algebra with the trivial differential.}
A mild generalization of our construction to DG (differential graded) algebras will be needed in Section~\ref{dg-section}. A DG algebra $A$ is a $\Z$-graded algebra, $A=\oplusop{i\in \Z} A^i$,
with a differential $d$ of degree one, such that $d(ab) = d(a)b+(-1)^{|a|}ad(b),$ where $|a|\in \Z$ is the degree of $a$, for homogeneous $a$. Each DG algebra is viewed as a superalgebra by reducing the degree modulo two and forgetting the differential.

In this paper we will only encounter the simplest case, when
the differential is trivial on $A$. We assume this to be the
case. With the differential zero on $A$, no additional conditions
on $A$ or $e$ are needed, and the generalization from the super algebras to such DG algebras is straightforward. The grading is
now by $\Z$ and not just $\Z/2$, with idempotent $e$ in degree zero.

We choose $A''$ to be a subspace that is the direct sum of its intersections with the homogenous summands of $(1-e)A(1-e)$. Likewise, bases $\BB_{0,0}, \BB_{0,1}, \BB_{1,0}, \BB_{1,1}(1)$ are chosen to consist of homogeneous elements.

The DG category $\C$ is constructed from this data just as in the
super-algebra case. Due to $\Z$-grading, one introduces enlarged
morphism spaces,
 $\HOM_{\C}(M,N) = \bigoplus\limits_{m\in \Z}\Hom_{\C}(M,N[m]).$

Morphism spaces $\HOM_{\C}$  between tensor powers of $X$ have bases as described in Theorem~\ref{basisthm}, with heights in the basis diagrams tracked as in Figure~\ref{super1fig}. The differential acts by zero
on all morphism spaces $\HOM_{\C}(X^{\otimes n}, X^{\otimes m})$.
Diagrams super-commute, with the super grading given by reducing
the $\Z$-grading modulo two.

\vspace{0.1in}

\n {\bf A chain of ideals $J_{n,k}$.} Now assume
$A$ is an algebra, or a super-algebra, or a DG algebra with the trivial
differential. The ring $A_k=\End_{\C}(X^{\otimes k})$
is spanned by diagrams of decorated long and short strands, with each diagram
having $\ell$ long strands and $2(k-\ell)$ short strands, an equal number $k-\ell$
at both top and bottom. Composing two such diagram $D_1$, $D_2$
with $\ell_1$ and $\ell_2$ long arcs, correspondingly, results in the
product $D_2D_1$, which is also an endomorphism of $X^{\otimes k}$, that
decomposes into a linear combination of diagrams, each with at most
$\min(\ell_1,\ell_2)$ long strands. The number of long strands in a diagram
cannot increase upon composition with another diagram.

Therefore, there is a two-sided ideal $J_{n,k}$ of $A_k$ whose elements are
linear combinations of diagrams with at most $n$ long strands. Here $0\le n \le k$. It's also convenient to define $J_{-1,k}$ to be the zero ideal.
There is a chain of inclusions of two-sided ideals
\begin{gather} \label{def ideal}
0 = J_{-1,k}\subset J_{0,k}\subset \dots \subset J_{k-1,k}\subset J_{k,k} = A_k.
\end{gather}
Basis $\BB_{k,k}$ of $A_k$ respects this ideal filtration, and
restricts to a basis in each $J_{n,k}$.
\begin{cor} \label{cor-basis-ideal}
Two-sided ideal $J_{n,k}$ has a basis
$$\mathbb{B}_{k,k}(\le n) \ = \  \bigsqcup\limits_{0 \leq \ell \leq n} \mathbb{B}_{k,k}(\ell).$$
\end{cor}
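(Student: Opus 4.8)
The plan is to read the statement off directly from Theorem~\ref{basisthm}, combined with the observation recorded just above the corollary that the number of long strands in a diagram cannot increase under composition. Set $V_{\le n} := \operatorname{span}_{\K}\bigl(\mathbb{B}_{k,k}(\le n)\bigr) \subseteq A_k$. Since $\mathbb{B}_{k,k}(\le n)=\bigsqcup_{0\le \ell\le n}\mathbb{B}_{k,k}(\ell)$ is a subset of the basis $\mathbb{B}_{k,k}$ of $A_k$ produced by Theorem~\ref{basisthm}, it is automatically $\K$-linearly independent and hence a basis of $V_{\le n}$; so everything reduces to proving $J_{n,k}=V_{\le n}$.

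The inclusion $V_{\le n}\subseteq J_{n,k}$ is immediate: each element of $\mathbb{B}_{k,k}(\ell)$ with $\ell\le n$ is a decorated long-and-short-strand diagram with exactly $\ell\le n$ long strands, hence lies in $J_{n,k}$, which by definition is spanned by all such diagrams. For the reverse inclusion $J_{n,k}\subseteq V_{\le n}$ I would show that an arbitrary decorated diagram $D$ with at most $n$ long strands, when rewritten in the basis $\mathbb{B}_{k,k}$ using isotopy rel boundary and the relations of Figures~\ref{rel1fig}--\ref{rel3fig}, expands into a $\K$-linear combination of basis diagrams all lying in $\mathbb{B}_{k,k}(\le n)$. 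Isotopy fixes the underlying partial bijection and hence the number of long strands; and each normalization step in the proof of Theorem~\ref{basisthm} can only preserve or decrease that number: a long-strand label $a=a'+a''$ splits into the term $a''\in A''$, still a single long strand (whose label is then expanded in $\mathbb{B}_{1,1}(1)$), and the term $a'=\sum_i b_ic_i\in A'=\operatorname{im}(m')$, each summand of which replaces the long strand by a pair of short strands as in Figure~\ref{rel3fig}, while floating strands merely get absorbed into scalars. As $J_{n,k}$ is by definition the $\K$-span of such diagrams $D$, this gives $J_{n,k}\subseteq V_{\le n}$, and the corollary follows.

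The fact that $J_{n,k}$ is indeed a two-sided ideal of $A_k$, already explained in the paragraph preceding the corollary, uses the same non-increase principle applied to composition on either side, so no extra work is needed for that. There is no genuine obstacle here: the statement is a direct corollary of Theorem~\ref{basisthm}, and the only point requiring a moment's care is the bookkeeping claim that passing to normal form never creates new long strands, which is visible from the shape of the relations in Figures~\ref{rel1fig}--\ref{rel3fig}.
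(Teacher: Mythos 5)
Your proposal is correct and follows the same route the paper intends: the corollary is stated without proof there, justified only by the remark that the basis $\mathbb{B}_{k,k}$ respects the filtration because composition and normalization never increase the number of long strands. Your write-up simply makes explicit the two inclusions and the bookkeeping that the rewriting steps of Theorem~\ref{basisthm} preserve or decrease the long-strand count, which is exactly the argument the authors have in mind.
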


In the special case $k=1$, we denote by $J$ the ideal $J_{0,1}$ and
by $L$ the quotient ring $A_1/J_{0,1}$. Thus, there is an
exact sequence
$$ 0 \lra J \lra A_1 \lra L \lra 0,$$
where
\begin{eqnarray*}
   J & = & J_{0,1} = (1-e)AeA(1-e)  \\
   A_1 & = & (1-e)A(1-e) \\
   L  & = & A_1/J \cong (1-e)A(1-e)/(1-e)AeA(1-e)
   \end{eqnarray*}
$A_1=\End_{\C}(X)$ is a subring of $A$, with the unit element $1-e$,
and $L$ is isomorphic to the quotient of $A_1$ by the two-sided ideal of maps that factors
through $\mb$.

The quotient ring
\begin{gather} \label{def lk}
L_k = A_k / J_{k-1,k}
\end{gather}
is naturally isomorphic
to $L^{\otimes k}$, the $k$-th tensor power of $L$.
Graphically, we quotient the space of linear combinations of
decorated diagrams with $k$ endpoints at both bottom and top by
the ideal of diagrams with at least one short strand (necessarily at
least one at the top and the bottom). Elements in the quotient by
this ideal will be represented by linear combinations of diagrams
of $k$ decorated long strands, modulo diagrams where a long strand
simplifies into a linear combination of a pair of decorated short
strands. The quotient is isomorphic to $L=L_1$, defined above for $k=1$,
and to the $k$-th tensor power of $L$ for general $k$.
In the super-case, the tensor power is understood correspondingly,
counting signs.


\section{A categorification of $\Ztwo$}
\label{dg-section}

The goal of this section is to describe a monoidal DG category $\C$, and its associated monoidal triangulated Karoubi closed category $D^c(\C)$.
We show that the Grothendieck ring $K_0(D^c(\C))$ is isomorphic to $\Ztwo$.

\subsection{A diagrammatic category $\C$}
As before, we work over a field  $\K$.
Consider a pre-additive monoidal category $\C$ with one generating object $X$, enriched over the category
of $\Z$-graded super-vector spaces over $\K$, with the supergrading given by reducing the $\Z$-grading modulo $2$.
In a pre-additive category, homomorphisms between any two objects constitute an abelian group (in our case, a
$\Z$-graded $\K$-vector space), but direct sums of objects are not formed.

A set of generating morphisms together with their degrees
is given in Figure \ref{sec3fig1}.

  \begin{figure}[h]
    \includegraphics[height=4cm]{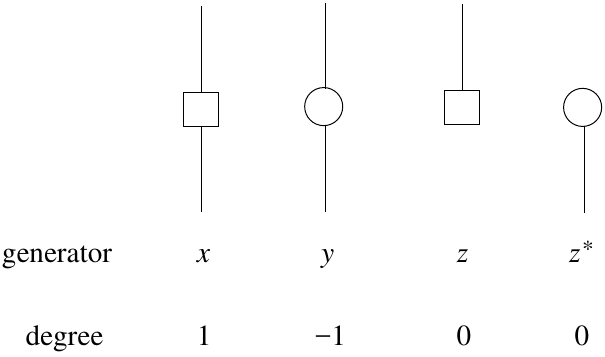}
    \caption{Generating morphisms}
  \label{sec3fig1}
   \end{figure}

Two of these four generating morphisms
are endomorphisms of $X$, of degrees $1$ and $-1$, correspondingly,
one is a morphism from $\mb$ to $X$ of degree $0$, and the fourth morphism
goes from $X$ to $\mb$ and has degree $0$. We denote these generators
$x,y,z,\wtz$, from left to right, so that $x,y$ are endomorphisms of $X$, $z$ a morphism from $\mb$ to $X$, and $\wtz$ a morphism from $X$ to $\mb$.
We draw $x$ as a long strand decorated by a box, $y$ as a long strand decorated by a circle, $z$ as a short top strand decorated by a box, and $\wtz$ as a short bottom strand decorated by a circle, respectively.
A pair of far away generators super-commute.
The first two generators $x$ and $y$ have odd degrees, while $z$ and $\wtz$ have even degrees.

Local relations are given in Figure~\ref{sec3fig2}.
They are
\begin{align} \label{local rel}
\begin{split}
 & z^* z= 1_{\mb}, \ \ \ \ z^* x = 0, \ \ \ \  y z=0, \\
 & yx= 1_X, \ \ \ \  xy + zz^* = 1_X.
\end{split}
\end{align}
The identity map $1_{\mb}$ of the object $\mb$ is represented by the empty diagram.
Figure~\ref{sec3fig3} shows our notation for powers and some compositions of the generating morphisms.


  \begin{figure}[h]
    \includegraphics[height=6cm]{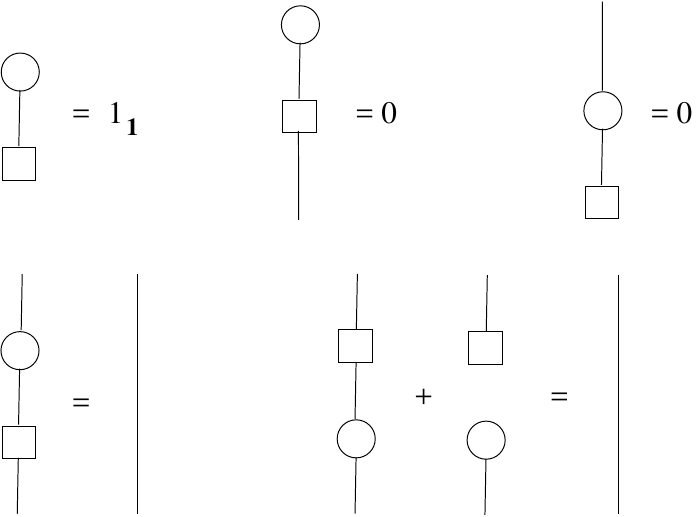}
    \caption{Defining local relations.}
  \label{sec3fig2}
   \end{figure}

  \begin{figure}[h]
    \includegraphics[height=3.4cm]{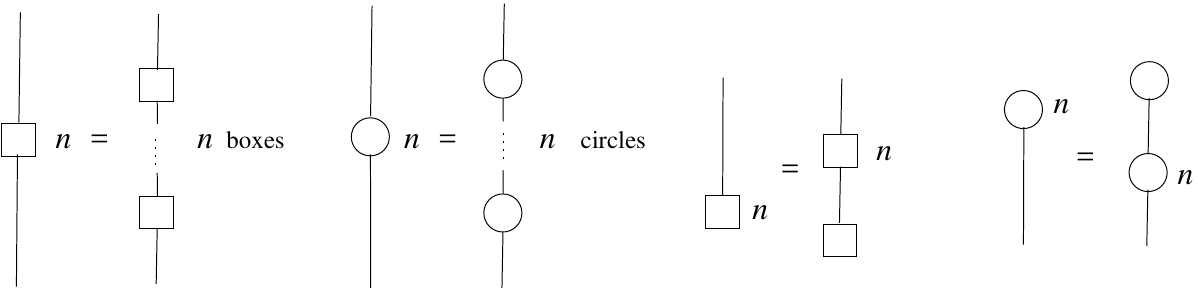}
    \caption{Notations for compositions, left to right: $x^n, y^n, x^nz$ and $\wtz y^n$.}
  \label{sec3fig3}
   \end{figure}

We write $\Hom_{\C}(M,N)$ for the vector space of degree $0$ morphisms,
and $\HOM_{\C}(M,N)$ for the graded vector space with degree components--homogeneous maps of degree $m$:
$$ \HOM_{\C}(M,N) = \bigoplus\limits_{m\in \Z}\Hom_{\C}(M,N[m]).$$

If $\op{char}(\K)\neq 2$ we choose an order of heights of decorations as follows.
For any pair of strands, the height of decorations on the left strand is above the height of decorations on the right strand.
Let $f \ot g$ denote the horizontal composition of two diagrams $f$ and $g$, where the height of $f$ is above that of $g$, see Figure~\ref{sec3fig4}.

\begin{figure}[h]
    \includegraphics[height=2.6cm]{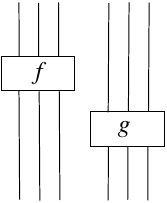}
    \caption{Convention for $f\ot g$}
    \label{sec3fig4}
\end{figure}

\vspace{.2cm}
\n{\bf Bases of morphism spaces.}
We observe that the category $\C$ is generated from the suitable
data $(A,e)$ as described in Section~\ref{basis-section},
where $A$ is a DG algebra with the trivial differential.
To see this, we restrict the above diagrams and defining relations on them to the case when there is at most one strand at the top and at most
one strand at the bottom. In other words, we consider generating
morphisms in Figure~\ref{sec3fig1} and compose them only vertically,
not horizontally, with the defining relations in Figure~\ref{sec3fig2}. The idempotent $e$ is given by the
empty diagram, while $1-e$ is the undecorated vertical strand diagram.

Ignoring the grading and the (zero) differential on $A$, it follows from the defining relations (\ref{local rel}) that $A$ is isomorphic to the Jacobson algebra~\cite{J} and to the Leavitt path algebra $L(T)$ of the Toeplitz graph $T$ in (\ref{quiver2}), see~\cite[Example 7]{A}.
In particular, as a $\K$-vector space, algebra $A$ has a basis
$$ \{1_{\mb}\} \cup \{ x^n z ~|~ n\ge 0\} \cup \{\wtz y^n ~|~ n \ge 0\} \cup \{ x^n z \wtz y^m ~|~ n,m\ge 0 \} \cup \{1_X, x^n, y^n ~|~ n >0 \} $$
by~\cite[Corollary 1.5.12]{AAS} or by a straightforward computation.
Our notations for some of these basis elements are shown in
 Figure~\ref{sec3fig3}.

The basis of $A$ can be split into the following disjoint subsets:
\be
\item $eAe\cong \K$ has a basis  $\mathbb{B}_{0,0}=\{1_{\mb}\}$ consisting of a single element which is the empty diagram;
\item $(1-e)Ae$ has a basis $\mathbb{B}_{1,0}=\{x^n z ~|~ n \geq 0\}$. Element $x^n z$
is depicted by a short top strand decorated by a box with label $n$, see Figure~\ref{sec3fig3};
\item $eA(1-e)$ has a basis $\mathbb{B}_{0,1}=\{\wtz y^n ~|~ n\geq 0\}$. Element
$\wtz y^n$ is depicted by a short bottom strand decorated by a circle with label $n$ (lollipop in Figure~\ref{sec3fig3});
\item $(1-e)A(1-e)$ has a basis $\mathbb{B}_{1,1}(0) \sqcup \mathbb{B}_{1,1}(1)$, where $\mathbb{B}_{1,1}(0)=\{x^n z \wtz y^m ~|~ n,m \geq 0\}$ consists of pairs (short top strand with a labelled box, short bottom strand with a labelled circle), and $\mathbb{B}_{1,1}(1)=\{1_X, x^n, y^n ~|~ n >0 \}$ consists of long strand diagrams which may carry either circles or boxes, but not both.
\ee
The multiplication map $ (1-e)Ae \otimes eA(1-e)  \ra (1-e)A(1-e)$ sends the basis $\mathbb{B}_{1,0} \times \mathbb{B}_{0,1}$ of
$(1-e)Ae \otimes eA(1-e)$ bijectively to $\mathbb{B}_{1,1}(0)$ so that the multiplication map is injective.

We see that the conditions on $(A,e)$ from
the beginning of Section~\ref{basis-section} are satisfied,
and we can indeed form the monoidal category $\C$ as above with objects
$X^{\otimes n}$, over $n\ge 0$.
Algebra $A$ can then be described as the direct sum
$$A\ \cong \ \END_{\C}(\mb)\oplus \HOM_{\C}(\mb, X) \oplus \HOM_{\C}(X,\mb) \oplus \END_{\C}(X),$$
which is a DG algebra with the trivial differential.
Therefore, a basis of $\HOM_{\C}(\xn, \xm)$ is given in Theorem \ref{basisthm}.



\subsection{DG extensions of $\C$}
\label{subsec-DG-ext}

We turn the category $\C$ into a DG category by introducing a differential
$\partial$ which is trivial on all generating morphisms. Necessarily,
$\partial$ is trivial on the space of morphisms between any two objects of $\C$.
The resulting DG category is still denoted by $\C$.

We refer the reader to \cite{Ke2} for an introduction to DG categories.
For any DG category $\D$ we write $\Hom_{\D}(Y,Y')$ for the vector space of degree $0$ morphisms,
and $\HOM_{\D}(Y,Y')$ for the chain complex of vector spaces with degree components $\Hom_{\D}(Y,Y'[m])$ of homogeneous maps of degree $m$.
A {\em right DG $\D$-module} $M$ is a DG functor $M: \D^{op} \ra Ch(\K)$ from the opposite
DG category $\D^{op}$ to the DG category of chain complexes of $\K$-vector spaces.
For each object $Y$ of $\D$, there is a right module $Y^{\we}$ {\em represented by} $Y$
$$Y^{\we}=\HOM_{\D}(-,Y).$$
Unless specified otherwise, all DG modules are right DG modules in this paper.

We use the notations from \cite[Section 3.2.21]{Sch2}.
For any DG category $\D$, there is a canonical embedding $\D \subset \wtd$ of $\D$ into the pre-triangulated DG category $\wtd$ associated to $\D$.
It's obtained from $\D$ by formally adding iterated shifts, finite direct sums, and cones of morphisms.
The homotopy category $\op{Ho}(\wtd)$ of $\wtd$ is triangulated.
It is equivalent to the full triangulated subcategory of the derived category $D(\D)$ of DG $\D$-modules which is generated by $\D$.
Each object $Y$ of $\op{Ho}(\wtd)$ corresponds to a module $Y^\we$ of $D(\D)$ under the equivalence.
The idempotent completion $\wt{\op{Ho}}(\wtd)$ of $\op{Ho}(\wtd)$ is equivalent to the triangulated category $D^c(\D)$ of compact objects in $D(\D)$ by \cite[Lemma 2.2]{Nee}.

To summarize, there is a chain of categories
$$\D \subset \wtd \dashrightarrow \op{Ho}(\wtd) \subset \wt{\op{Ho}}(\wtd) \simeq D^c(\D) \subset D(\D).$$
The first two categories are DG categories, and $\D \subset \wtd$ is fully faithful.
The last four categories are triangulated.
The dashed arrow between $\wtd$ and $\op{Ho}(\wtd)$ is not a functor.
More precisely, $\op{Ho}(\wtd)$ has the same objects as $\wtd$, and morphism spaces as subquotients of morphism spaces of $\wtd$.
It is a full subcategory of its idempotent completion $\wt{\op{Ho}}(\wtd)$.
The category $D^c(\D)$ of compact objects in $D(\D)$ is a full triangulated subcategory of $D(\D)$.

\begin{defn} \label{def B(A)}
For a unital DG algebra $R$, let $\B(R)$ be a DG category with a single object $\ast$ such that $\END_{\B(R)}(\ast)=R$.
Let $D(R)$ and $D^c(R)$ denote $D(\B(R))$ and $D^c(\B(R))$, respectively.
\end{defn}

\begin{rmk} \label{rmk DA}
If $R$ is an ordinary unital algebra viewed as a DG algebra concentrated in degree $0$ with the trivial differential, then $D(\B(R))$ is equivalent to the derived category of $R$-modules, and $D^c(\B(R))$ is equivalent to the triangulated category of perfect complexes of $R$-modules, see~\cite[Section 6.5]{Kr} and~\cite[Proposition 70.3]{St}.
\end{rmk}

Since the DG category $\C$ is monoidal, it induces a monoidal structure on $\wtc$ which preserves homotopy equivalences.
There are induced monoidal structures on the triangulated categories $\op{Ho}(\wtc)$ and $\wt{\op{Ho}}(\wtc)$.
We are interested in the Grothendieck ring of $\wt{\op{Ho}}(\wtc) \simeq D^c(\C)$.

\vspace{.2cm}
\n{\bf Isomorphisms in $D^c(\C)$:}
Each morphism $f \in \Hom_{\C}(Y,Y')$ with $\bdry f=0$ induces a morphism in $\Hom_{D^{c}(\C)}(Y^\we,{Y'}^\we)$, denoted $f$ by abuse of notation.
The generating morphisms in Figure~\ref{sec3fig1} and the local relations in Figure~\ref{sec3fig2} induce an isomorphism in $D^c(\C)$
\begin{gather} \label{eq iso}
X^\we \cong \mb^\we \oplus X^\we[-1],
\end{gather}
given by $(\wtz, y)^T \in \Hom_{D^c(\C)}(X^\we, \mb^\we \oplus X^\we[-1])$, and $(z,x) \in \Hom_{D^c(\C)}(\mb^\we \oplus X^\we[-1], X^\we)$, see Figure~\ref{sec3fig5}.
Tensoring with $(X^\we)^{\otimes (k-1)}$ in $D^c(\C)$ on either side of isomorphism (\ref{eq iso}) results in isomorphisms in $D^c(\C)$
\begin{gather} \label{eq iso2}
(X^\we)^{\otimes k} \cong (X^\we)^{\otimes (k-1)} \oplus (X^\we)^{\otimes k}[-1].
\end{gather}


  \begin{figure}[h]
    \includegraphics[height=4cm]{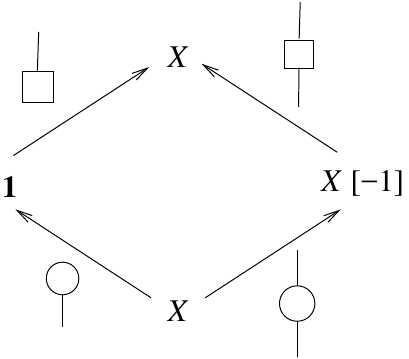}
    \caption{The isomorphism $X^\we \cong \mb^\we \oplus X^\we[-1]$}
    \label{sec3fig5}
  \end{figure}

\subsection{DG algebras of endomorphisms} \label{subsec dga of endo}
Part of the structure of $\C$ can be encoded into an idempotented DG algebra $B$ with the trivial differential, which has a complete system of mutually orthogonal idempotents $\{1_k\}_{k\ge 0},$ so that
$$ B \ = \ \bigoplus\limits_{m,n\ge 0} 1_m B 1_n,$$
and
$$ 1_m B 1_n = \HOM_{\C}(X^{\otimes n},X^{\otimes m}).$$
Multiplication in $B$ matches composition of morphisms in $\C$.

The tensor structure of $\C$ induces a tensor structure on $B$.
Given $f,g \in B$ represented by some diagrams in $\C$, let $f \ot g$ be an element of $B$ represented by the horizontal composition of the two diagrams for $f$ and $g$, where the diagram for $f$ is on the left whose height is above the height of the diagram for $g$.
There is the super-commutativity relation
$$(f \ot g)(f' \ot g')=(-1)^{\op{deg}(g)\op{deg}(f')}ff' \ot gg'$$
for homogeneous elements $f,f',g,g' \in B$.

We also define
$$ B_k = \bigoplus\limits_{m,n\le k} 1_m B 1_n,$$
which is a DG algebra with the trivial differential and the unit element $\sum\limits_{n \le k} 1_n$.
The inclusions $B_k \subset B_{k+1}$ and $B_k \subset B$ are nonunital.
Define
\begin{gather} \label{def An}
A_k = 1_k B 1_k = \op{END}_{\C}(\xk),
\end{gather}
which is a DG algebra with the trivial differential and the unit element $1_k$.
For $k=0$, the DG algebras
\begin{gather} \label{def A0}
A_0=B_0 \cong \K.
\end{gather}
The inclusion $A_k \subset B_k$ is nonunital for $k>0$.

Let $\alpha_k: A_{k-1} \hookrightarrow A_{k}$ be an inclusion of DG algebras given by tensoring with $z\wtz$ on the left
\begin{gather} \label{def alpha}
\alpha_k(f)=(z\wtz)\otimes f,
\end{gather}
for $f \in A_{k-1}$.
Note that $\alpha_k$ is nonunital.

\begin{defn} \label{def Jk}
For $k\geq 1$, let $J_{k}$ be the two-sided DG ideal of $A_k$ generated by diagrams with at most $k-1$ long strands.
The quotient $L_k=A_k / J_{k}$ is naturally a unital DG algebra with the trivial differential.
\end{defn}

By Theorem~\ref{basisthm} and Proposition~\ref{cor-basis-ideal} in Section~\ref{basis-section}, $A_k$ has a $\K$-basis $\bigsqcup\limits_{0\leq \ell \leq k}\mathbb{B}_{k,k}(\ell)$, and $J_{k}$ has a $\K$-basis $\bigsqcup\limits_{0\leq \ell \leq k-1}\mathbb{B}_{k,k}(\ell)$.
So $L_k$ has a $\K$-basis given by the images of elements of $\mathbb{B}_{k,k}(k)$ under the quotient map $A_k \ra L_k$.

The ideal $J=J_{1}$ has a $\K$-basis $\mathbb{B}_{1,1}(0)=\{x^iz\wtz y^j ~|~ i,j\geq0\}$.
The unital DG algebra $L=L_1$ is generated by $\overline{x}, \overline{y},$ which are the images of $x, y \in A_1$ under the quotient map $A_1 \ra L$.
There is an exact sequence
$$0 \ra J \ra A_1 \ra L \ra 0$$
of DG algebras with the trivial differentials.
For $n\ge 0$, let $\mbox{M}_n(\K)$ be the $(n+1)\times (n+1)$
matrix DG algebra with the trivial differential and a
standard basis $\{e_{ij} ~|~ 0 \leq i,j \leq n\}$ of elementary matrices, with $\mbox{deg}(e_{ij})=i-j$.

\begin{prop} \label{prop iso}
There are isomorphisms of DG $\K$-algebras with trivial differentials:
\begin{gather*}
J \cong \mbox{M}_{\N}( \K),\\
L \cong \K[a,a^{-1}], \quad \mbox{deg}(a)=1,\\
\lk \cong \K\lan a_1^{\pm1}, \dots, a_k^{\pm1} \ran / (a_ia_j=-a_ja_i, i \neq j), \quad \mbox{deg}(a_i)=1.
\end{gather*}
\end{prop}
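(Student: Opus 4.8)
The plan is to handle the three isomorphisms separately. Since every differential in sight is trivial, in each case it is enough to produce a degree-preserving isomorphism of graded $\K$-algebras, and it suffices to exhibit a map that is bijective on the explicit bases supplied by Theorem~\ref{basisthm} and Corollary~\ref{cor-basis-ideal}. For the first isomorphism, recall that $J=J_1$ has $\K$-basis $\mathbb{B}_{1,1}(0)=\{x^iz\wtz y^j \mid i,j\geq 0\}$ (Definition~\ref{def Jk}). I would define $\phi\colon J\to \mathrm{M}_{\N}(\K)$ on this basis by $\phi(x^iz\wtz y^j)=e_{ij}$; it is a $\K$-linear bijection, and it preserves degree because $\deg(x^iz\wtz y^j)=i-j=\deg(e_{ij})$, using $\deg x=1$, $\deg y=-1$, $\deg z=\deg\wtz=0$ from Figure~\ref{sec3fig1}. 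Multiplicativity reduces to the identity $\wtz\, y^jx^kz=\delta_{jk}\,1_{\mb}$: repeated use of $yx=1_X$ turns $y^jx^k$ into $x^{k-j}$ when $j\le k$ and into $y^{j-k}$ when $j>k$, and then $\wtz x^{k-j}z=z^\ast x^{k-j}z$ equals $1_{\mb}$ if $k=j$ and vanishes if $k>j$ by $z^\ast x=0$, while $\wtz y^{j-k}z$ vanishes for $j>k$ by $yz=0$. Hence $(x^iz\wtz y^j)(x^kz\wtz y^l)=\delta_{jk}\,x^iz\wtz y^l$, which matches $e_{ij}e_{kl}=\delta_{jk}e_{il}$, so $\phi$ is a DG-algebra isomorphism.

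For the second isomorphism, observe that in $A_1$ the relations (\ref{local rel}) give $yx=1_X$ and $xy=1_X-z\wtz$, with $z\wtz\in\mathbb{B}_{1,1}(0)\subset J$. Thus in $L=L_1=A_1/J$ the images $\overline{x},\overline{y}$ are two-sided inverses of each other, so $a\mapsto\overline{x}$, $a^{-1}\mapsto\overline{y}$ defines a degree-preserving homomorphism $\psi\colon\K[a,a^{-1}]\to L$ with $\deg a=\deg\overline{x}=1$. It is surjective since $\overline{x},\overline{y}$ generate $L$, and it carries the monomial basis $\{a^n\mid n\in\Z\}$ bijectively onto the basis $\{\overline{1}_X\}\cup\{\overline{x}^n,\overline{y}^n\mid n>0\}$ of $L$ (the images of $\mathbb{B}_{1,1}(1)$), hence is an isomorphism.

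For the third isomorphism, I would invoke the identification $L_k=A_k/J_{k-1,k}\cong L^{\otimes k}$ recorded in Section~\ref{basis-section} (see the discussion after (\ref{def lk})), the tensor power taken in the super sense with signs. Substituting $L\cong\K[a,a^{-1}]$ with $a$ of odd degree $1$, the elements $a_i=1^{\otimes(i-1)}\otimes a\otimes 1^{\otimes(k-i)}$, $1\le i\le k$, are invertible, and the Koszul sign rule for the super tensor product gives $a_ia_j=-a_ja_i$ for $i\neq j$; since $\{a^{n_1}\otimes\cdots\otimes a^{n_k}\mid n_i\in\Z\}$ is a basis of $L^{\otimes k}$ and $\{a_1^{n_1}\cdots a_k^{n_k}\}$ is the standard monomial basis of $\K\lan a_1^{\pm1},\dots,a_k^{\pm1}\ran/(a_ia_j+a_ja_i,\ i\neq j)$, the evident map between the two is an isomorphism. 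None of these computations is a genuine obstacle; the step that needs the most care is this last one, where one must keep the super signs straight and use that $\deg a=1$ is \emph{odd} (a consequence of the degree choices in Figure~\ref{sec3fig1}), so that the super tensor power of $\K[a,a^{-1}]$ is the anticommutative ($q=-1$) quantum torus rather than an ordinary Laurent polynomial ring, together with the standard fact that this presentation has the expected monomial basis.
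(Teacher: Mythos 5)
Your proposal is correct and follows essentially the same route as the paper: the same basis-matching map $x^iz\wtz y^j\leftrightarrow e_{ij}$ for $J$, the same mutually inverse generators $\overline{x},\overline{y}$ for $L$, and the same identification of $L_k$ with the $(-1)$-commuting Laurent algebra via the super tensor structure and the monomial basis coming from Theorem~\ref{basisthm}. The only differences are cosmetic (you orient some maps in the opposite direction and spell out the multiplicativity check $\wtz\,y^jx^kz=\delta_{jk}1_{\mb}$, which the paper leaves implicit).
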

\begin{proof}
Define the isomorphism $\mbox{M}_{\N}( \K) \ra J$ by $e_{ij} \mapsto x^iz\wtz y^j$ for $i,j \in \N$. The nonunital DG algebra $J$ is isomorphic to the direct limit $\mbox{M}_{\N}(\K)$ of unital DG algebras $\mbox{M}_n( \K)$
under non-unital inclusions $\mbox{M}_n( \K)\subset
\mbox{M}_{n+1}(\K)$ taking $e_{ij}$ to $e_{ij}$.

Define a map of algebras $L \ra \K[a,a^{-1}]$ by $\overline{x} \mapsto a, \overline{y} \mapsto a^{-1}.$
It is an isomorphism since
$$\overline{y}~\overline{x}=\overline{yx}=1 \in L, \qquad \overline{x}~\overline{y}=\overline{xy}=\overline{1-z\wtz}=1 \in L,$$
by the local relations (\ref{local rel}).

For $1 \leq i \leq k$, let $x_i=1 \ot \cdots \ot x \ot \cdots 1$ and $y_i=1 \ot \cdots \ot y \ot \cdots 1 \in A_k$ whose $i$th factors are $x$ and $y \in A_1$, respectively.
Then $\lk$ is generated by images $\overline{x_i}, \overline{y_i}$, and subject to relations
$$\overline{x_i} ~ \overline{y_i}=\overline{y_i} ~ \overline{x_i}=1, \qquad \overline{x_i} ~ \overline{x_j}=-\overline{x_j} ~ \overline{x_i} \quad \mbox{for}~~i\neq j.$$
Define the isomorphism $\lk \ra \K\lan a_1^{\pm1}, \dots, a_k^{\pm1} \ran / (a_ia_j=-a_ja_i, i \neq j)$ by $\overline{x_i} \mapsto a_i, \overline{y_i} \mapsto a_i^{-1}$ for $1 \leq i \leq k$.
\end{proof}

  \begin{figure}[h]
    \includegraphics[height=4cm]{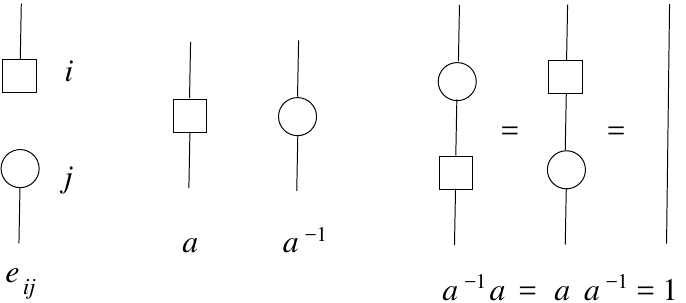}
    \caption{Basis element $e_{ij}$ of the ideal $J$ and elements $a,a^{-1}$ of the quotient $L$, where generators of $L$ are represented by the same diagrams as for $A_1$ by abuse of notation.}
    \label{sec3fig6}
  \end{figure}

We fix the isomorphisms for $J, L, L_k$ in Proposition \ref{prop iso}.
See Figure~\ref{sec3fig6} for $J$ and $L$.

\subsection{Approximation of $D^c(\C)$ by $D^c(A_k)$.}

Let $\C_k$ be the smallest full DG subcategory of $\C$ which contains the objects $X^{\otimes n}$, $0 \leq n \leq k$.
Note that $\C_k$ is not monoidal.
There is a family of inclusions $\C_{k-1} \subset \C_k$ of DG categories.
They induce a family of functors $\imath_k: D(\C_{k-1}) \ra D(\C_k)$.
For $0 \leq n \leq k-1$, $\imath_k(\xnw)=\xnw$ is compact in $D(\C_k)$, and
$$\op{END}_{D(\C_{k-1})}(\xnw) \cong A_{n} \cong \op{END}_{D(\C_{k})}(\xnw) \cong \op{END}_{D(\C_{k})}(\imath_k(\xnw)).$$
The functor $\imath_k: D(\C_{k-1}) \ra D(\C_k)$ is fully faithful by \cite[Lemma 4.2 (a, b)]{Ke1}.
The restriction to the subcategory $\imath_k^c: D^c(\C_{k-1}) \ra D^c(\C_k)$ of compact objects is also fully faithful.
Similarly, there is a family of inclusions $g_k^c: D^c(\C_{k}) \ra D^c(\C)$ of triangulated categories.

Recall that the Grothendieck group $K_0(\T)$ of an essentially small triangulated category $\T$ is the abelian group generated by symbols $[Y]$ for every object $Y$ of $\T$, modulo the relation $[Y_2]=[Y_1]+[Y_3]$ for every distinguished triangle $Y_1 \ra Y_2 \ra Y_3 \ra Y_1[1]$ in $\T$.
In particular, $[Y_1]=[Y_2]$ if $Y_1$ and $Y_2$ are isomorphic.
The functors $\imath_k^c$ and $g_k^c$ send distinguished triangles to distinguished triangles and induce maps of abelian groups
$${\imath_k^c}_*: K_0(D^c(\C_{k-1})) \ra K_0(D^c(\C_{k})), \qquad {g_k^c}_*: K_0(D^c(\C_{k})) \ra K_0(D^c(\C)).$$
Let $\varinjlim K_0(D^c(\C_k))$ denote the direct limit of $K_0(D^c(\C_{k}))$ with respect to ${\imath_k^c}_*$.

\begin{prop} \label{prop K0 of C}
There is an isomorphism of abelian groups
$$K_0(D^c(\C)) \cong \varinjlim K_0(D^c(\C_k)).$$
\end{prop}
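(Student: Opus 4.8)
The plan is to exhibit $D^c(\C)$ as the idempotent completion of the direct limit (homotopy colimit) of the triangulated categories $D^c(\C_k)$ along the fully faithful functors $\imath_k^c$, and then invoke the known compatibility of $K_0$ with such colimits. First I would observe that $\C = \bigcup_k \C_k$ as DG categories, since every object $X^{\otimes n}$ lies in $\C_n$ and every morphism space $\HOM_\C(X^{\otimes n},X^{\otimes m})$ already lies in $\C_{\max(m,n)}$. The objects $X^{\otimes n}$ for $n \ge 0$ form a set of compact generators of $D(\C)$, and each $X^{\otimes n}$ already appears as the image of a generator of some $D^c(\C_k)$ under $g_k^c$. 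Hence the smallest full triangulated subcategory of $D^c(\C)$ closed under the relevant operations and containing all $g_k^c(D^c(\C_k))$ is all of $D^c(\C)$: every compact object of $D(\C)$ is a retract of a finite iterated extension of shifts of the $X^{\otimes n}$, and each such finite diagram already lives in some $D^c(\C_k)$.

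Next I would make precise the claim that $\bigcup_k g_k^c(D^c(\C_k))$, or rather its closure under retracts, is $D^c(\C)$. Concretely: for a fixed compact $Y \in D^c(\C)$, only finitely many of the generating objects $X^{\otimes n}$ are needed to build $Y$ up to homotopy (as a finite iterated cone of shifts, followed by splitting an idempotent); choosing $k$ at least as large as all those $n$, the object $Y$ is a retract of an object in the essential image of $g_k^c$. Since $g_k^c$ is fully faithful, $Y$ is isomorphic to a retract of an object of $D^c(\C_k)$, and by Thomason's theorem (or directly, since $D^c(\C_k)$ is already idempotent complete — it is the compacts in a compactly generated category) $Y$ itself is, up to isomorphism, in the essential image of $g_{k'}^c$ for $k'$ possibly one larger. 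This shows $D^c(\C) = \bigcup_k g_k^c(D^c(\C_k))$ up to equivalence, i.e. $D^c(\C)$ is the filtered colimit of the $D^c(\C_k)$ in the $2$-category of triangulated categories, with the $\imath_k^c$ as transition functors (using the compatibility $g_k^c \circ \imath_k^c = g_{k-1}^c$).

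Finally, $K_0$ commutes with filtered colimits of categories: given a distinguished triangle in $D^c(\C)$, all three terms and the maps between them live in some $D^c(\C_k)$, and any relation among classes in $K_0(D^c(\C))$ is witnessed by finitely many triangles, hence holds already in some $K_0(D^c(\C_k))$. Thus the natural map $\varinjlim K_0(D^c(\C_k)) \to K_0(D^c(\C))$ is both surjective (every object of $D^c(\C)$ comes from some stage) and injective (every defining relation is pulled back from some stage), giving the asserted isomorphism of abelian groups.

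The main obstacle I expect is the idempotent-completeness bookkeeping in the second step: one must be careful that splitting an idempotent on an object built from $X^{\otimes 0},\dots,X^{\otimes k}$ does not require passing outside all the $D^c(\C_{k'})$, and that the transition functors $\imath_k^c$ genuinely land in the compact subcategories (which is handled by \cite[Lemma 4.2]{Ke1} and \cite[Lemma 2.2]{Nee} as cited, identifying $D^c(\C_k)$ with the idempotent completion of the thick subcategory generated by $X^{\otimes 0},\dots,X^{\otimes k}$). Once the diagram of categories and functors is set up correctly, the $K_0$ statement is formal.
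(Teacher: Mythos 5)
Your proposal is correct and follows essentially the same route as the paper: the paper's proof is exactly the two observations that the maps ${g_k^c}_*$ assemble into $g_*\colon \varinjlim K_0(D^c(\C_k)) \to K_0(D^c(\C))$, which is surjective because every object of $D^c(\C)$ lives in some $D^c(\C_k)$ up to isomorphism, and injective because every distinguished triangle does as well. Your additional bookkeeping (compacts are retracts of finite extensions of shifts of the $X^{\otimes n}$, and idempotent splitting stays within a stage since each $D^c(\C_k)$ is already idempotent complete and $g_k^c$ is fully faithful) simply fills in details the paper leaves implicit; note only that once an idempotent splits inside $D^c(\C_k)$ there is no need to pass to a larger index $k'$.
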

\begin{proof}
The family of maps ${g_k^c}_*$ induces a map $g_*: \varinjlim K_0(D^c(\C_k)) \ra K_0(D^c(\C))$ since functors $g_{k-1}^c$ and $g_{k}^c \circ \imath_k^c$ are isomorphic.
The map $g_*$ is surjective since any object of $D^c(\C)$ is contained in $D^c(\C_k)$ for some $k$ up to isomorphism.
The map $g_*$ is injective since any distinguished triangle in $D^c(\C)$ is contained in $D^c(\C_k)$ for some $k$ up to isomorphism.
\end{proof}

The category $\C_k$ contains a full DG subcategory $\C_k'$ of a single object $\xk$ whose endomorphism DG algebra $\op{END}_{\C_k}(\xk)=A_k$ by (\ref{def An}).
Thus, the category $\C_k'$ is isomorphic to $\B(A_k)$, and $D(A_k)=D(\B(A_k))$, see Definition~\ref{def B(A)}.
There is an inclusion $\B(A_k) \subset \C_k$ of DG categories.
The induced functors
\begin{align} \label{hkc}
h_k: D(A_k) \ra D(\C_k), \qquad h_k^c: D^c(A_k) \ra D^c(\C_k)
\end{align}
of triangulated categories are fully faithful by \cite[Lemma 4.2 (a, b)]{Ke1}.
A set $\cal{H}$ of objects of a triangulated category $\T$ is a {\em set of generators} if $\T$ coincides with its smallest strictly full triangulated subcategory containing $\cal{H}$ and closed under infinite direct sums, see \cite[Section 4.2]{Ke1}.
In particular, $\{\xnw, 0 \leq n \leq k\}$ forms a set of generators for $D(\C_k)$.
Equation (\ref{eq iso2}) implies that $\xnw$ is isomorphic to a direct summand of $\xkw$ for $0 \leq n \leq k$.
Let $p_n \in \op{End}_{D(\C_k)}(\xkw)$ denote the idempotent of projection onto the direct summand $\xnw$.
Then $\xnw$ is isomorphic to a DG $\C_k$-module given by a complex
$$\cdots \xra{1-p_n} \xkw \xra{p_n} \xkw \xra{1-p_n} \xkw.$$
Thus, $\{\xkw\}$ forms a set of compact generators for $D(\C_k)$.
The functor $h_k$ is an equivalence of triangulated categories by \cite[Lemma 4.2 (c)]{Ke1}.
It is clear that $h_k^c: D^c(A_k) \ra D^c(\C_k)$ is also an equivalence
and thus induces an isomorphism of Grothendieck groups
 ${h_k^c}_*: K_0(D^c(A_k)) \cong K_0(D^c(\C_k))$.
By Proposition~\ref{prop K0 of C}, there is a canonical isomorphism of abelian groups:
\begin{gather} \label{iso K0 of C}
K_0(D^c(\C)) \cong \varinjlim K_0(D^c(A_k)).
\end{gather}


\subsection{K-theory computations} \label{Sec K}
For a DG category $\D$, let $K_0(\D)$ denote the Grothendieck group of the triangulated category $D^c(\D)$.

If $R$ is an ordinary unital algebra viewed as a DG algebra concentrated in degree $0$ with the trivial differential, then there is a canonical isomorphism $K_0(D^c(R)) \cong K_0(R)$ by Remark \ref{rmk DA}, where $K_0(R)$ is the Grothendieck group of the ring $R$.

Without ambiguity let $K_0(R)$ denote $K_0(D^c(R))$ for a unital DG algebra $R$.
The isomorphism (\ref{iso K0 of C}) can be rewritten as
$$K_0(\C) \cong \varinjlim K_0(A_k).$$

In order to compute $K_0(A_k)$, we need higher K-theory of DG algebras and DG categories.
We briefly recall the definition of higher K-theory of DG categories following~\cite[Section 3.2.21]{Sch2}.
Schlichting~\cite[Section 3.2.12]{Sch2} introduces the notion of {\em complicial exact category with weak equivalences} whose higher K-theory is defined.
For a DG category $\D$, its pre-triangulated envelope $\wtd$ can be made into an exact category whose morphisms are maps of degree $0$ which commute with the differential. A sequence is exact if it is a split exact sequence when ignoring the differential.
 Then $(\wtd, w)=(\wtd, \mbox{homotopy equivalences})$ is a complicial exact category with homotopy equivalences as weak equivalences.
The K-theory of the DG category $\D$ is defined as the K-theory of the complicial exact category with weak equivalences $(\wtd, w)$.
This definition is equivalent to Waldhausen's definition of K-theory of a DG category according to \cite[Remark 3.2.13]{Sch2}.

We introduce the following notations.
For a DG category $\D$,
\begin{gather} \label{notation1}
K_1(\D)=K_1(\wtd, w), \qquad K_0'(\D)=K_0(\wtd, w).
\end{gather}
For a unital DG algebra $A$,
\begin{gather} \label{notation2}
K_1(A)=K_1(\B(A)), \qquad K_0'(A)=K_0'(\B(A)).
\end{gather}
Note that $K_0'(\D) \cong K_0(\op{Ho}(\wtd))$ by \cite[Proposition 3.2.22]{Sch2}.
Recall that $$K_0(\D)=K_0(D^c(\D)) \cong K_0(\wt{\op{Ho}}(\wtd)).$$
By \cite[Corollary 2.3]{Th}, $K_0'(\D) \ra K_0(\D)$ is injective.

\vspace{.2cm}
\n{\bf Exact sequences of derived categories.}
The main tool to compute $K_0(A_k)$ is the Thomason-Waldhausen Localization Theorem specialized to the case of DG categories.

A sequence of triangulated categories and exact functors
$\T_1 \stackrel{F_1}{\lra} \T_2 \stackrel{F_2}{\lra} \T_3 $
is called {\em exact} if $F_2F_1=0$, $F_1$ is fully faithful,
and $F_2$ induces an equivalence $\T_2/F_1(\T_1)\lra \T_3$,
see~\cite[Section 2.9]{Ke} and \cite[Section 3.1.5]{Sch2}.

A sequence of triangulated categories $\T_1 \stackrel{F_1}{\lra} \T_2 \stackrel{F_2}{\lra} \T_3$ is called {\em exact up to factors} if $F_2F_1=0$, $F_1$ is fully faithful, and $F_2$ induces an equivalence $\T_2/F_1(\T_1)\lra \T_3$ up to factors, see \cite[Definition 3.1.10]{Sch2}.
An inclusion $F:\mathcal{A}\lra \B$ of triangulated categories
is called an {\em equivalence up to factors} \cite[Definition 2.4.1]{Sch2} if every object of $\B$ is a direct summand of an object in $F(\mathcal{A})$.

Given a sequence of DG categories $\cal{A} \ra \cal{B} \ra \D$, if the sequence
$$D(\cal{A}) \ra D(\cal{B}) \ra D(\D)$$ of derived categories of DG modules is exact, then the associated sequence
$$D^c(\cal{A}) \ra D^c(\cal{B}) \ra D^c(\D)$$
of derived categories of compact objects is exact up to factors by Neeman's result \cite[Theorem 2.1]{Nee}.
According to \cite[Theorem 3.2.27]{Sch2}, the Thomason-Waldhausen Localization Theorem implies that there is an exact sequence of K-groups:
$$K_1(\D) \ra K_0(\cal{A}) \ra K_0(\cal{B}) \ra K_0(\D).$$

We will fit $D(A_k)$ into an exact sequence of derived categories and then make use of the localization theorem.

\begin{defn} \label{def rel proj}
For a DG algebra $A$, a DG $A$-module $Q$ is {\em (finitely generated) relatively projective} if it is a direct summand of a (finite) direct sum of modules of the form $A[n]$.
\end{defn}

We refer the reader to \cite[Section 3.1]{Ke} for the definition of {\em Property (P)} for a DG module.
Any relatively projective module has Property (P).
For any object $M \in D(A)$ there exists $P(M) \in D(A)$ which is isomorphic to $M$ in $D(A)$ and has Property (P) \cite[Theorem 3.1]{Ke}.
The object $P(M) \in D(A)$ is unique up to isomorphism.
If $A$ is an ordinary algebra viewed as a DG algebra concentrated in degree $0$, then $P(M)$ is a projective resolution of $M$.

Let $A,B$ be DG algebras, and $X$ be a DG left $A$, right $B$ bimodule.
We call $X$ a DG $(A, B)$-bimodule.
The derived tensor product functor $-\ot^{\mf{L}}_{A} X: D(A) \ra D(B)$ is defined by $M \ot^{\mf{L}}_{A} X \cong P(M) \ot_{A} X$.
Note that the derived tensor product commutes with infinite direct sums, see \cite[Section 6.1]{Ke}.

Consider $\alpha_k: A_{k-1} \ra A_k$ in (\ref{def alpha}).
Let $e_k=\alpha_k(1_{k-1})=(z\wtz) \ot 1_{k-1}$ which is an idempotent of $A_k$.
It generates a right ideal $I_k=e_kA_k$ of $A_k$.
The map $\alpha_k$ makes $I_k$ a DG $(A_{k-1}, A_k)$-bimodule and induces a functor
$$i_k=-\ot^{\mf{L}}_{A_{k-1}} I_k: D(A_{k-1}) \ra D(A_k).$$
Note that $I_k$ is relatively projective as a right $A_k$-module.

It is clear that $h_k \circ i_k$ is isomorphic to $\imath_k \circ h_{k-1}$ as functors $D(A_{k-1}) \ra D(\C_k)$, see (\ref{hkc}).
So
\begin{align}\label{iso K0 Ak}
K_0(\C) \cong \varinjlim K_0(A_k)
\end{align}
with respect to ${i_k}_*$.

The quotient map $A_k \ra L_k$ makes $\lk$ a DG $(A_k, \lk)$-bimodule, and induces a functor
$$j_k=-\ot^{\mf{L}}_{A_k} \lk: D(A_k) \ra D(L_k).$$

\vspace{.2cm}
\n{\bf A construction of $P(\lk)$ for $\lk \in D(A_k)$.}

For $k=1$, let $P(L)$ be a complex
$$\bigoplus\limits_{j \in \N} e_{jj}A_1 \ra A_1,$$
whose the differential is the sum of inclusions $\imath_j: e_{jj}A_1 \hookrightarrow A_1$, where $A_1$ is in degree $0$, $\bigoplus\limits_{j \in \N} e_{jj}A_1$ is in degree $-1$, and $e_{jj} \in J \subset A_1$ are idempotents.
In other words, $P(L)$ is the DG $A_1$-module
$$\left((\bigoplus\limits_{j \in \N} e_{jj}A_1[1]) \oplus A_1, \quad \bdry=\sum\limits_{j \in \N}\imath_j\right).$$
Since $\bigoplus\limits_{j \in \N} e_{jj}A_1=J$ as $A_1$-modules, $P(L) =(J \ra A_1) \cong L \in D(A_1)$.

For $k>1$, we take a product of $k$ copies of $P(L)$, where the product corresponds to the monoidal structure on $\C$.
More precisely, let $u(t,i)$ denote the idempotent of $A_k$ whose diagram consists of $k-1$ vertical long arcs and one pair of short arcs $e_{ii}$ as the $t$-th strand from the left, for $1 \leq t \leq k, i\in \N$, see Figure \ref{sec3fig7}.
They satisfy the commuting relations $u(t,i)u(t',i')=u(t',i')u(t,i)$ for $t \neq t'$.
So their products are also idempotents of $A_k$, denoted by $u(T,\mf{i})$ for $T \subset \{1,\dots,k\}$ and $\mf{i} \in \N^{|T|}$.
Here $u(\es,\es)$ is understood as the identity $1_k$ of $A_k$.
Let $$P(T,\mf{i}) =u(T,\mf{i})A_k$$
which is a relatively projective DG $A_k$-module.
For $P(T,\mf{i}), P(S,\mf{j})$ such that $T=S \sqcup \{r\}$ and $i_s=j_s$ for $s \in S$, there is an inclusion $\imath(T,\mf{i},r): P(T,\mf{i}) \ra P(S,\mf{j})$ of $A_k$-modules given by $\imath(T,\mf{i},r)(u(T,\mf{i}))=u(S,\mf{j})u(r,i_r)$.

\begin{figure}[h]
    \includegraphics[height=4cm]{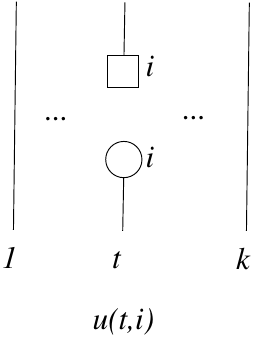}
    \caption{The idempotent $u(t,i) \in A_k$.}
    \label{sec3fig7}
  \end{figure}

Consider a DG $A_k$-module $P(\lk)$ given by a complex of relatively projective DG $A_k$-modules of finite length:
$$\bigoplus\limits_{|T|=k, \mf{i}\in\N^{|T|}} P(T,\mf{i}) \ra \bigoplus\limits_{|T|=k-1, \mf{i}\in\N^{|T|}} P(T,\mf{i}) \ra \cdots \ra \bigoplus\limits_{|T|=1, \mf{i}\in\N^{|T|}} P(T,\mf{i}) \ra A_k$$
with the differential
$$\bdry=\sum\limits_{T,\mf{i},r\in T} (-1)^{c(T,r)}\imath(T,\mf{i},r),$$
where $A_k$ is in degree $0$, and $c(T,r)=\#\{t\in T ~|~ t<r\}$.
The complex $P(\lk)$ is exact except at $A_k$.
Let
\begin{gather} \label{def pr}
\op{pr}: P(\lk) \ra \lk
\end{gather}
be the quotient map $A_k \ra \lk$ on the summand $A_k$, and the zero map on the remaining summands of $P(\lk)$.
Then $\op{pr}$ is an isomorphism in $D(A_k)$.

Except for the last term $A_k$, each $P(T,\mf{i})$ is naturally a submodule of the ideal $J_k$ of $A_k$, which is the kernel of the quotient map $A_k \ra L_k$, see Definition \ref{def Jk}.
This implies
\begin{gather} \label{iso lkotlk}
j_k(\lk)=\lk \ot^{\mf{L}}_{A_k} \lk \cong P(\lk)\ot_{A_k} \lk \cong A_k \ot_{A_k} \lk \cong \lk \in D(A_k).
\end{gather}

\begin{lemma} \label{lem exact}
The sequence of derived categories $D(A_{k-1}) \xra{i_k} D(A_k) \xra{j_k} D(\lk)$ is exact.
\end{lemma}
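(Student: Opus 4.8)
The plan is to verify the three defining properties of an exact sequence of triangulated categories for $D(A_{k-1}) \xra{i_k} D(A_k) \xra{j_k} D(\lk)$: (1) $j_k \circ i_k = 0$, (2) $i_k$ is fully faithful, and (3) $j_k$ induces an equivalence $D(A_k)/i_k(D(A_{k-1})) \simeq D(\lk)$. Since $I_k = e_kA_k$ is relatively projective over $A_k$ and $\lk$ has the explicit resolution $P(\lk)$ by relatively projective modules constructed above, all derived tensor products in question can be computed naively, which keeps the bookkeeping manageable.

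\textbf{Vanishing of the composite.} First I would show $j_k \circ i_k = 0$. Since $i_k = -\ot^{\mf L}_{A_{k-1}} I_k$ with $I_k$ relatively projective over $A_k$, and $j_k = -\ot^{\mf L}_{A_k}\lk$, the composite is $-\ot^{\mf L}_{A_{k-1}}(I_k \ot_{A_k}\lk)$. But $I_k = e_kA_k$ with $e_k = (z\wtz)\ot 1_{k-1}$ a diagram having at most $k-1$ long strands, so $e_k \in J_k$, the kernel of $A_k \ra \lk$. Hence $I_k\ot_{A_k}\lk = e_k\lk = 0$ in $\lk$, giving $j_k\circ i_k = 0$ on the nose.

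\textbf{Full faithfulness of $i_k$.} For this I would use the standard criterion: $i_k = -\ot^{\mf L}_{A_{k-1}} I_k$ is fully faithful on the generator $A_{k-1}^\we$ provided $I_k$, viewed as the image $A_{k-1}\ot^{\mf L}_{A_{k-1}} I_k = I_k$, satisfies $\op{END}_{D(A_k)}(I_k) \cong A_{k-1}$ with the right degrees and no higher Ext. Equivalently, since $I_k$ is relatively projective, $\HOM_{D(A_k)}(I_k, I_k) = \Hom_{A_k}(e_kA_k, e_kA_k) = e_kA_ke_k$, and one checks $e_kA_ke_k \cong A_{k-1}$ via $\alpha_k$: the diagram $e_k$ forces the first strand to be a short-strand pair $z\wtz$, and what remains on the other $k-1$ strands is exactly $A_{k-1}$, with the isomorphism given by $\alpha_k$. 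This is essentially the argument already used to establish that $h_k\circ i_k \simeq \imath_k\circ h_{k-1}$; since $\imath_k$ was shown fully faithful via \cite[Lemma 4.2]{Ke1} and $h_{k-1}, h_k$ are equivalences, full faithfulness of $i_k$ follows, or can be argued directly from $e_kA_ke_k\cong A_{k-1}$ together with the relative projectivity of $I_k$ (so no derived corrections appear).

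\textbf{The quotient equivalence.} This is the main obstacle. I would argue that the Verdier quotient $D(A_k)/i_k(D(A_{k-1}))$ is generated by the image of $\lk^\we$, and that $j_k$ is fully faithful on this image. The localizing subcategory generated by $i_k(D(A_{k-1}))$ is the localizing subcategory $\langle I_k\rangle = \langle e_kA_k\rangle$ generated by the idempotent ideal $I_k$; in fact $\langle I_k\rangle$ coincides with the localizing subcategory generated by $J_k$, because the explicit complex $P(\lk)$ exhibits $\lk$ as an iterated cone of the modules $u(T,\mf i)A_k$ for $T\neq\es$, each of which lies in $\langle I_k\rangle$ (each $u(T,\mf i)$ factors through some $u(r,i_r)$, which is conjugate/related to $e_k$ through the monoidal symmetry and the isomorphisms \eqref{eq iso2}). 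Hence $A_k/J_k = \lk$ generates the quotient. Then the standard recollement argument for an idempotent ideal (Neeman--Ranicki / the localization theorem) gives that $j_k = -\ot^{\mf L}_{A_k}\lk$ identifies $D(A_k)/\langle I_k\rangle$ with $D(\lk)$: full faithfulness on the generator reduces to $\HOM_{D(\lk)}(\lk,\lk) = \lk$ matching $\HOM_{D(A_k)}(P(\lk), P(\lk))$ modulo maps factoring through $\langle I_k\rangle$, and essential surjectivity is automatic since $\lk^\we$ generates $D(\lk)$. The computation \eqref{iso lkotlk}, namely $\lk\ot^{\mf L}_{A_k}\lk\cong\lk$, is precisely the input showing $j_k$ is a localization (it says the unit of the adjunction is an isomorphism on $\lk$). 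Assembling these three points yields that $D(A_{k-1})\xra{i_k}D(A_k)\xra{j_k}D(\lk)$ is exact; the delicate step throughout is controlling the localizing subcategory generated by $I_k$ and confirming it is exactly the kernel of $j_k$, which is where the explicit finite resolution $P(\lk)$ and the relative projectivity of all modules involved do the real work.
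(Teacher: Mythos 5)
Your proposal is correct and follows essentially the same route as the paper: full faithfulness of $i_k$ via $e_kA_ke_k\cong A_{k-1}$ and Keller's criterion, vanishing of $j_k\circ i_k$ from $e_kA_k\subset J_k$, and the quotient equivalence from the finite resolution $P(\lk)$ (all of whose terms except $A_k$ lie in the image of $i_k$) together with the key computation $\lk\ot^{\mf L}_{A_k}\lk\cong\lk$, which the paper spells out via the right adjoint $f_k$ and the counit isomorphism rather than by citing the idempotent-ideal recollement. The only quibble is terminological: the isomorphism $\lk\ot^{\mf L}_{A_k}\lk\ra\lk$ is the counit, not the unit, of the adjunction $j_k\dashv f_k$.
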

\begin{proof}
The image $i_k(A_{k-1})$ is isomorphic to the module $e_kA_k$ which is finitely generated relatively projective.
In particular, $i_k(A_{k-1})$ is compact in $D(A_k)$, and
$$\op{END}_{D(A_{k-1})}(A_{k-1}) \cong A_{k-1} \cong e_k A_{k}e_k \cong \op{END}_{D(A_{k})}(i_k(A_{k-1})).$$
The functor $i_k: D(A_{k-1}) \ra D(A_k)$ is fully faithful by \cite[Lemma 4.2]{Ke1}.

The composition $j_k \circ i_k$ sends the free module $A_{k-1}$ to
$$j_k \circ i_k (A_{k-1}) = j_k(A_{k-1} \ot_{A_{k-1}}I_k)\cong j_k(I_k) = e_kA_k \ot^{\mf{L}}_{A_k} L_k \cong e_kA_k \ot_{A_k} L_k=0,$$
where the last isomorphism holds since $e_kA_k$ is relatively projective, and the last equality holds since $e_kA_k$ is contained in the ideal $J_k$ which is the kernel of the quotient map $A_k \ra L_k$.
The composition $j_k \circ i_k$ commutes with the infinite direct sums \cite[Section 6.1]{Ke}.
Thus, $j_k \circ i_k=0$ on the smallest full triangulated subcategory of $D(A_{k-1})$ containing the free module $A_{k-1}$ and closed under infinite direct sums.
This full subcategory coincides with $D(A_{k-1})$, see \cite[Section 4.2]{Ke}.
It follows that $j_k \circ i_k=0$ on $D(A_{k-1})$.

The algebras $\lk$ and $A_k$ act on $\lk$ both from left and right via the map $A_k \ra L_k$.
In the following computation, we view $\lk$ in one of the three ways:
(1) as a right $\lk$-module, denoted $\lk^L$;
(2) as a right $A_k$-module, denoted $\lk^A$;
and (3) as a $(A_k, \lk)$-bimodule, denoted $^A\lk^L$.

The functor $j_k$ admits a right adjoint functor $f_k: D(\lk) \ra D(A_k)$ which is the restriction functor with respect to the quotient map $A_k \ra \lk$.
In particular, $f_k(\lk^L)=\lk^A$.
The functor $f_k$ is fully faithful if and only if the counit map
\begin{gather} \label{iso counit}
\delta_{\lk}: \lk^{A} \otimes^{\mf{L}}_{A_k} {^{A}} \lk ^{L} \ra \lk ^{L}
\end{gather}
is an isomorphism of right $\lk$-modules, see \cite[Lemma 4 (1,3)]{NS}.
The counit map $\delta_{\lk}$ is the image of $1_{\lk} \in \Hom_{D(A_k)}(\lk^A,\lk^A)=\Hom_{D(A_k)}(\lk^A,f_k(\lk^L))$ under the adjunction isomorphism $ad: \Hom_{D(A_k)}(\lk^A,f_k(\lk^L)) \cong \Hom_{D(\lk)}(\lk^A \otimes^{\mf{L}}_{A_k} {^{A}} \lk ^{L}, \lk ^{L})$.
Replacing $\lk^A$ by its resolution $P(\lk)$, there is a chain of isomorphisms
\begin{align*}
\Hom_{D(A_k)}(\lk^A,f_k(\lk^L)) & \xra{f} \Hom_{D(A_k)}(P(\lk),f_k(\lk^L)) \xra{ad} \Hom_{D(\lk)}(P(\lk) \otimes_{A_k} {^{A}} \lk ^{L} , \lk^L) \\
& \xra{g} \Hom_{D(\lk)}(\lk^L,\lk^L) \xra{h} \Hom_{D(\lk)}(\lk^{A} \otimes^{\mf{L}}_{A_k} {^{A}} \lk ^{L}, \lk ^{L}).
\end{align*}
Here $g$ and $h$ are induced by (\ref{iso lkotlk}).
Recall $\op{pr}: P(\lk) \ra \lk$ from (\ref{def pr}), and let $m: \lk \otimes_{A_k} \lk \ra \lk$ denote the multiplication map.
Then
$$\delta_{\lk}=h\circ g\circ ad \circ f(1_{\lk})=h\circ g \circ ad(\op{pr})=h\circ g(m \circ (\op{pr} \ot 1_{\lk}))=h(1_{\lk})$$
which is an isomorphism.
It follows that $f_k: D(\lk) \ra D(A_k)$ is fully faithful.

Let $\cal{T}_{k}=D(A_k) / i_k(D(A_{k-1}))$, and $q_k: D(A_k) \ra \cal{T}_k$ denote the quotient functor.
Since $j_k \circ i_k$ is zero, the functor $j_k$ factors through $t_k: \cal{T}_k\ra D(\lk)$.
Let $$s_k=q_k\circ f_k: D(\lk) \ra D(A_k) \ra \cal{T}_k.$$
It is clear that $t_k \circ s_k=t_k \circ q_k\circ f_k=j_k \circ f_k$ is an equivalence since the counit map $\delta_{\lk}: j_k \circ f_k(\lk) \ra \lk$ in (\ref{iso counit}) is an isomorphism, and the conditions of Lemma 4.2(a,c) in \cite{Ke1} hold.

It remains to show that $s_k$ is an equivalence.
By \cite[Lemma 4.2 (a,c)]{Ke1}, it is enough to show that $s_k(\lk)$ is a compact generator of $\cal{T}_k$, and ${s_k}_*: \Hom_{D(\lk)}(\lk,\lk[n]) \ra \Hom_{\cal{T}_k}(s_k(\lk),s_k(\lk)[n])$ is an isomorphism.
The object $$s_k(\lk)=q_k\circ f_k(\lk) \cong q_k(P(\lk)) \cong q_k(A_k),$$ since all other terms except for $A_k$ in $P(\lk)$ lie in $i_k(D(A_{k-1}))$.
Theorem 2.1 in \cite{Nee} implies that $q_k(A_k)$ is a compact object of $\cal{T}_{k}$ since $A_k$ is a compact object of $D(A_k)$.
Moreover, $\{q_k(A_k)\}$ generates $\cal{T}_{k}$ since $\{A_k\}$ generates $D(A_k)$.
We have ${s_k}_*={q_k}_* \circ {f_k}_*$, where
$${f_k}_*: \Hom_{D(\lk)}(\lk,\lk[n]) \ra \Hom_{D(A_k)}(f_k(\lk),f_k(\lk)[n]),$$
$${q_k}_*: \Hom_{D(A_k)}(f_k(\lk),f_k(\lk)[n]) \ra \Hom_{\cal{T}_k}(s_k(\lk),s_k(\lk)[n]).$$
The map ${f_k}_*$ is an isomorphism since $f_k$ is fully faithful.
The map ${q_k}_*$ is an isomorphism if $\Hom_{D(A_k)}(i_k(M),f_k(\lk)[n])=0$ for any $M \in D(A_{k-1})$ by \cite[Definition 9.1.3, Lemma 9.1.5]{Nee1}.
By adjointness $\Hom_{D(A_k)}(i_k(M),f_k(\lk)[n])=\Hom_{D(\lk)}(j_k \circ i_k(M), \lk[n])=0$ since $j_k \circ i_k=0$.
We finally conclude that $s_k$ is an equivalence.
\end{proof}

There is an exact sequence of K-groups
\begin{gather} \label{les}
K_1(\lk) \xra{\bdry} K_0(A_{k-1}) \xra{{i_k}_*} K_0(A_k) \xra{{j_k}_*} K_0(\lk),
\end{gather}
induced by the exact sequence of the derived categories in Lemma \ref{lem exact}.

To compute $K_0(A_k)$ we need $K_i(\lk)$ for $i=0,1$.


\subsection{K-theory of $\lk$}
We compute $K_i(\lk)$ for $i=0,1$ in this subsection.
The key tool to compute $K_0(\lk)$ is a result of Keller \cite[Theorem 3.1(c)]{Ke1}.
Recall the notion of relatively projective DG modules from Definition \ref{def rel proj}.
For any DG right $\lk$-module $M$, let $M[1]$ denote the shift of $M$, where $M[1]^i=M^{i+1}$, $d_{M[1]}=-d_M$, and $m[1]\cdot a=ma [1]$ for $m \in M, m[1] \in M[1]$ and $a \in \lk$.
See Sections 10.3 and 10.6.3 in \cite{BL} for definitions of shifts of left and right DG modules, respectively.

\begin{thm}[Keller \cite{Ke1}] \label{thm Keller}
Given any DG algebra $A$ and  a DG $A$-module $M$, let
$$\cdots \ra \overline{Q_n} \ra \cdots \ra \overline{Q_0} \ra H^*(M) \ra 0$$
be a projective resolution of $H^*(M)$ viewed as graded $H^*(A)$-module such that $\overline{Q_n} \xra{\sim} H^*(Q_n)$ for a relatively projective DG $A$-module $Q_n$. Then $M$ is isomorphic to a module $P(M)$ in the derived category $D(A)$ which admits a filtration $F_n$ such that $\bigcup\limits_{n=0}^{\infty} F_n=P(M)$, the inclusion $F_{n-1} \subset F_n$ splits as an inclusion of graded $A$-modules, and
$F_n / F_{n-1} \xra{\sim} Q_n[n]$ as DG $A$-modules.
\end{thm}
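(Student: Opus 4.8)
This is Keller's theorem \cite{Ke1}, so in practice one simply cites it, but here is the argument I would reconstruct. The one genuinely useful input is a \emph{lifting lemma}: for a relatively projective DG $A$-module $Q$ and any DG $A$-module $N$, the maps out of $Q$ in $D(A)$ are computed by the homotopy category (relatively projective modules have Property~(P), hence are homotopically projective), and moreover the map $\Hom_{K(A)}(Q,N)\ra\Hom_{H^*(A)}(H^*Q,H^*N)$ induced by taking cohomology is surjective. The second assertion reduces, via direct sums and direct summands, to the case $Q=A$, where it is even a bijection: a graded $H^*(A)$-module map out of $H^*(A)$ is determined by the image of $1$, an arbitrary cohomology class of $N$, and sending $1$ to any cycle representing that class defines an honest DG map $A\ra N$ realizing it.

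Next I would fix the resolution $\cdots\ra\overline{Q_n}\xra{\overline{\partial}_n}\cdots\xra{\overline{\partial}_1}\overline{Q_0}\xra{\overline{\epsilon}}H^*(M)\ra 0$ and write $Z_{n+1}=\ker(\overline{\partial}_n)=\IM(\overline{\partial}_{n+1})\subset\overline{Q_n}$ for the $(n{+}1)$-st syzygy, with $\overline{\partial}_0:=\overline{\epsilon}$ so that $Z_1=\ker\overline{\epsilon}$. I then build, by induction on $n$, DG $A$-modules $F_n$, graded-split inclusions $F_{n-1}\hookrightarrow F_n$ with $F_n/F_{n-1}\cong Q_n[n]$, and DG maps $g_n\colon F_n\ra M$ with $g_n|_{F_{n-1}}=g_{n-1}$, maintaining the invariant $H^*(\op{cone}(g_n))\cong Z_{n+1}[n+1]$ as graded $H^*(A)$-modules. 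For $n=0$ take $F_0=Q_0$ and let $g_0\colon Q_0\ra M$ lift $\overline{\epsilon}\colon H^*(Q_0)=\overline{Q_0}\twoheadrightarrow H^*(M)$ via the lemma; the long exact sequence of $\op{cone}(g_0)$ and surjectivity of $H^*(g_0)=\overline{\epsilon}$ give $H^*(\op{cone}(g_0))\cong\ker(\overline{\epsilon})[1]=Z_1[1]$. For the step $n-1\to n$: the invariant identifies $H^*(\op{cone}(g_{n-1}))$ with $Z_n[n]=\IM(\overline{\partial}_n)[n]$, so corestricting $\overline{\partial}_n$ gives a graded surjection $H^*(Q_n[n])=\overline{Q_n}[n]\twoheadrightarrow Z_n[n]=H^*(\op{cone}(g_{n-1}))$; lift it by the lemma to a DG map $\psi_n\colon Q_n[n]\ra\op{cone}(g_{n-1})$. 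Let $w_n\colon Q_n[n]\ra F_{n-1}[1]$ be the composite of $\psi_n$ with the canonical map $\op{cone}(g_{n-1})\ra F_{n-1}[1]$, and define $F_n$ by the triangle $F_{n-1}\ra F_n\ra Q_n[n]\xra{w_n}F_{n-1}[1]$; concretely $F_n=F_{n-1}\oplus Q_n[n]$ as a graded module with differential twisted by $\psi_n$ plus the lower corrections forcing $d^2=0$, which makes all the filtration claims hold on the nose. Since $w_n$ factors through $\op{cone}(g_{n-1})$, the octahedral axiom produces $g_n\colon F_n\ra M$ extending $g_{n-1}$ together with an isomorphism $\op{cone}(g_n)\cong\op{cone}(\psi_n)$; feeding $H^*(\psi_n)=\overline{\partial}_n[n]$, a surjection onto $Z_n[n]$, into the long exact sequence of $\op{cone}(\psi_n)$ yields $H^*(\op{cone}(g_n))\cong\ker(\overline{\partial}_n)[n+1]=Z_{n+1}[n+1]$, closing the induction.

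Finally I would set $P(M):=\bigcup_n F_n=\varinjlim_n F_n$ along the graded-split inclusions, so that $P(M)\cong\bigoplus_n Q_n[n]$ as a graded module and $P(M)$ is an increasing union of iterated extensions of relatively projective modules, hence has Property~(P); the $g_n$ assemble into $g_\infty\colon P(M)\ra M$. Because $\op{cone}$ and $H^*$ commute with filtered colimits of complexes, $H^*(\op{cone}(g_\infty))=\varinjlim_n H^*(\op{cone}(g_n))=\varinjlim_n Z_{n+1}[n+1]$; and the transition map $H^*(\op{cone}(g_n))\ra H^*(\op{cone}(g_{n+1}))$ vanishes, since in the triangle $Q_{n+1}[n+1]\xra{\psi_{n+1}}\op{cone}(g_n)\ra\op{cone}(g_{n+1})\ra Q_{n+1}[n+1][1]$ it follows the surjection $H^*(\psi_{n+1})$. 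Hence $H^*(\op{cone}(g_\infty))=0$, the map $g_\infty$ is a quasi-isomorphism, and $P(M)\cong M$ in $D(A)$ with the asserted filtration $F_n$. The one place real care is needed — the main obstacle — is the inductive step: choosing the $\psi_n$ together with the accompanying lower corrections so that the twisted differential on $F_n$ squares to zero (a finite Maurer--Cartan-type bookkeeping at each stage) and keeping the $g_n$ strictly compatible; everything else is formal manipulation in $D(A)$ via the lifting lemma, the octahedral axiom, and exactness of filtered colimits of complexes.
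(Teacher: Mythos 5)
The paper does not prove this statement: it is quoted from Keller \cite{Ke1} and used as a black box, so there is no internal proof to compare against. Your reconstruction is essentially Keller's own argument --- the inductive cone/Postnikov construction driven by the lifting property $\Hom_{K(A)}(Q,N)\twoheadrightarrow\Hom_{H^*(A)}(H^*Q,H^*N)$ for relatively projective $Q$ --- and it is sound, including the shift bookkeeping $H^*(\op{cone}(g_n))\cong Z_{n+1}[n+1]$ and the vanishing of the transition maps in the colimit. The one cosmetic point is that the Maurer--Cartan-type corrections you flag as the main obstacle are not actually an issue: $w_n$ is a composite of honest chain maps, so the standard cone differential on $F_{n-1}\oplus Q_n[n]$ squares to zero with no further adjustment, and the lower-triangular components of the total differential on $\bigoplus_n Q_n[n]$ are produced automatically by iterating the cone construction rather than having to be solved for.
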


We specialize to the case $A=\lk$.
There is an isomorphism of free right DG $\lk$-modules
\begin{gather} \label{iso lk}
h_k: \lk \simeq \lk[1]
\end{gather}
given by $h_k(m)=a_k\cdot m$ for $m \in \lk$, where the multiplication is that of the algebra $\lk$ and $a_k$ is the invertible closed element of degree $1$.
So $Q$ is relatively projective if it is a direct summand of a free module $\lk^I$, where $I$ is the index set.
Since the differential is trivial on $\lk$, $H^*(\lk) \cong \lk$ as graded algebras.
So $Q$ is a relatively projective DG $\lk$-module if and only if $Q \cong H^*(Q)$ is a direct summand of $\lk^I$ as graded $\lk$-module.
Given any projective resolution of $H^*(M)$ as in Theorem \ref{thm Keller} we can take $Q_n=\overline{Q_n}$ viewed as a DG $\lk$-module.

We now consider projective resolutions of $N=H^*(M)$.
Let $N=\bigoplus N^i$ be its decomposition into homogenous components.
Let $\rk$ denote the degree zero subalgebra of the graded algebra $\lk$.
Then $\rk$ is generated by $b_i=a_i a_k^{-1}$ for $1 \leq i \leq k-1$, and
\begin{gather} \label{def rk}
\rk=\K \lan b_1^{\pm1}, ..., b_{k-1}^{\pm1} \ran / (b_ib_j=-b_jb_i, i \neq j).
\end{gather}
We fix the inclusion $\rk \ra \lk$ from now on.
There is an isomorphism of graded algebras
\begin{gather} \label{iso lkrk}
\lk \cong \rk\lan a_k^{\pm1} \ran / (b_ia_k=-a_kb_i).
\end{gather}
For any graded $\lk$-module $N$, each component $N^i$ is a $\rk$-module.
Since $a_k$ is invertible of degree $1$, any graded $\lk$-module $N$ is completely determined by $N^0$ as a $\rk$-module.
More precisely, the action of $a_k$ induces an isomorphism of $\rk$-modules
\begin{align} \label{iso twist}
N^{i+1} \ra \alpha(N^{i}),
\end{align}
where $\alpha(N^{i})$ is the abelian group $N^i$ with the $\alpha$-twisted action of $\rk$ via an automorphism $\alpha:\rk \ra \rk$ given by $\alpha(b_i)=-b_i$.
Any projective resolution $P(N^0)$ of a $\rk$-module $N^0$ induces a projective resolution $P(N^i)$ of $N^i$.
The direct sum $\bigoplus\limits_{i\in\Z}P(N^i)$ is a projective resolution of the graded $\lk$-module $N$.

We recall the following results about $\rk$ studied by Farrell and Hsiang \cite{FH}.
The algebra $\rk \cong R_{k-2}[b_{k-1}^{\pm 1}]$ is an $\alpha$-twisted finite Laurent series ring.
According to \cite[Theorem 25]{FH}, $\rk$ is right regular.
So any finitely generated $\rk$-module admits a finite resolution by finitely generated projective $\rk$-modules.
Furthermore, $K_0(\rk) \cong \Z$ with a generator $[\rk]$ by \cite[Theorem 27]{FH}.

Any isomorphism class of objects in $D^c(\lk)$ has a representative $M$ which is isomorphic to a direct summand of $\lk^{\oplus r}$ for some finite $r$ as graded $\lk$-modules (ignoring the differential).
So $M^0$ and $H^0(M)$ are finitely generated $\rk$-modules since $\rk$ is Noetherian.
Then $H^0(M)$  admits a finite resolution by finitely generated projective $\rk$-modules.
The graded $\lk$-module $H^*(M)$ admits a finite resolution by finitely generated projective $\lk$-modules.
We have the following lemma by applying Keller's Theorem \ref{thm Keller}.

\begin{lemma} \label{lem p resolution}
Any $M$ in $D^c(\lk)$ is isomorphic to $P(M)$ which admits a finite filtration $F_n(M)$ such that $F_n(M) / F_{n-1}(M) \xra{\sim} Q_n(M)[n]$ is a finitely generated relatively projective DG $\lk$-module.
\end{lemma}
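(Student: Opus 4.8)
The plan is to assemble the statement directly from Keller's Theorem~\ref{thm Keller}, using the structural facts about $\lk$ and $\rk$ collected just above. First I would reduce to a convenient model: given $M\in D^c(\lk)$, replace it by a representative (still called $M$) that, after forgetting the differential, is a direct summand of a finite free graded module $\lk^{\oplus r}$. This is the key finiteness input, and it is exactly what was established in the paragraph preceding the lemma. From this it follows that $M^0$ (the degree-zero component) is a finitely generated $\rk$-module, hence so is its cohomology $H^0(M)$, since $\rk$ is Noetherian.

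Next I would produce a \emph{finite} projective resolution of the graded $H^*(\lk)$-module $H^*(M)$. Here I use that $H^*(\lk)\cong \lk$ as graded algebras (the differential on $\lk$ is trivial), and the observation via (\ref{iso twist}) that a graded $\lk$-module is determined by its degree-zero piece as an $\rk$-module with the $\alpha$-twist accounting for the other gradings. By the Farrell--Hsiang results quoted above, $\rk$ is right regular, so the finitely generated module $H^0(M)$ admits a finite resolution by finitely generated projective $\rk$-modules; inducing up (summing over the gradings as described right before the lemma) gives a finite resolution $\cdots\to\overline{Q_n}\to\cdots\to\overline{Q_0}\to H^*(M)\to 0$ of the graded $\lk$-module $H^*(M)$ by \emph{finitely generated} graded projective $\lk$-modules, with $\overline{Q_n}=0$ for $n$ large.

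Then I would feed this resolution into Theorem~\ref{thm Keller}, taking $Q_n=\overline{Q_n}$ viewed as a DG $\lk$-module with trivial differential; these are finitely generated relatively projective, since a finitely generated graded projective over $\lk$ is a direct summand of a finite free graded module, and graded projectivity coincides with DG relative projectivity here because the differential vanishes. The theorem then yields an object $P(M)$ isomorphic to $M$ in $D(\lk)$ (hence in $D^c(\lk)$) carrying a filtration $F_n$ with $\bigcup_n F_n=P(M)$, each inclusion $F_{n-1}\subset F_n$ a split monomorphism of graded $\lk$-modules, and $F_n/F_{n-1}\xra{\sim} Q_n[n]$. Since the resolution is finite, the filtration is finite, so $P(M)$ is obtained from finitely many finitely generated relatively projective pieces, and each subquotient $Q_n(M)[n]$ is as claimed. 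The main obstacle—already resolved by the preparatory paragraphs—is establishing the two finiteness statements simultaneously: that compactness forces $H^0(M)$ to be a finitely generated $\rk$-module, and that right regularity of $\rk$ (hence of $\lk$ in the graded sense) makes the resolution finite. Once those are in hand, the lemma is a direct application of Keller's theorem, and I would only need to spell out the translation between graded-projective and DG-relatively-projective modules over the trivial-differential algebra $\lk$.
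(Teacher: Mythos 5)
Your proposal is correct and follows essentially the same route as the paper: reduce to a representative that is a direct summand of a finite free graded module, use Noetherianity and right regularity of $\rk$ (Farrell--Hsiang) to get a finite resolution of $H^0(M)$ by finitely generated projectives, induce up via the $\alpha$-twist to a finite graded projective resolution of $H^*(M)$ over $\lk\cong H^*(\lk)$, and apply Keller's Theorem~\ref{thm Keller} with $Q_n=\overline{Q_n}$. The only point you make more explicit than the paper is the identification of graded projectives with relatively projective DG modules when the differential is trivial, which is a welcome clarification rather than a deviation.
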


\begin{lemma} \label{lem K0 of L}
There is a surjection of abelian groups $\eta_k: \Z/2 \ra K_0(\lk)$.
\end{lemma}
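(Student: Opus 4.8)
The plan is to produce a surjection $\Z/2 \to K_0(L_k)$ by exhibiting a generator of $K_0(L_k)$ and showing it is $2$-torsion. First I would use Lemma~\ref{lem p resolution}: every $M \in D^c(L_k)$ is isomorphic to some $P(M)$ with a finite filtration whose subquotients are $Q_n(M)[n]$, finitely generated relatively projective DG $L_k$-modules. By the additivity of $[\cdot]$ on distinguished triangles in $K_0(D^c(L_k))$, this gives $[M] = \sum_n (-1)^n [Q_n(M)]$, so $K_0(L_k)$ is generated by classes of finitely generated relatively projective DG modules, i.e.\ by direct summands of finite sums of shifts $L_k[n]$. Next I would observe, using the isomorphism $h_k \colon L_k \simeq L_k[1]$ of free DG $L_k$-modules from (\ref{iso lk}), that $[L_k[1]] = [L_k]$ in $K_0(L_k)$; hence all shifts $L_k[n]$ have the same class, and therefore $K_0(L_k)$ is generated by classes of finitely generated relatively projective DG modules that are, up to the shift isomorphism, direct summands of $L_k^{\oplus r}$ as graded modules. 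Since $L_k$ is completely determined in degree $0$ by $R_{k-1}$ (via (\ref{iso twist})) and $K_0(R_{k-1}) \cong \Z$ is generated by $[R_{k-1}]$ by Farrell--Hsiang \cite[Theorem 27]{FH}, I would deduce that $K_0(L_k)$ is cyclic, generated by $[L_k]$.

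It then remains to show $2[L_k] = 0$ in $K_0(L_k)$, which defines the surjection $\eta_k \colon \Z/2 \to K_0(L_k)$, $1 \mapsto [L_k]$. The idea is to build a distinguished triangle in $D^c(L_k)$ whose three terms are $L_k$, $L_k$, and $L_k$ (possibly with shifts), forcing a relation like $[L_k] = [L_k] + [L_k]$, i.e.\ $[L_k] = 0$ is too strong --- rather I expect a relation of the form $0 = [L_k] + [L_k]$ coming from a self-map of $L_k$ whose cone is $L_k \oplus L_k[\pm 1]$ or, more likely, from a two-term complex $L_k \xrightarrow{\phi} L_k$ with $\phi$ a closed degree-zero endomorphism whose cone $C_\phi$ satisfies $[C_\phi] = [L_k[1]] - [L_k] = 0$ while simultaneously $C_\phi \cong L_k$ in $D^c(L_k)$, giving $[L_k] = 0$ --- again too strong. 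The correct mechanism, matching the $\Z/2$ in the statement, should be: there is a DG $L_k$-module whose underlying graded module is $L_k \oplus L_k$ but which, because of a nontrivial differential (available since $L_k$ has invertible odd-degree elements $a_i$ and the supercommutation $a_i a_j = -a_j a_i$), is acyclic or is isomorphic to $L_k$, yielding $2[L_k] = 0$ or $2[L_k] = [L_k]$. Concretely, I would try the complex $\left(L_k[1] \oplus L_k,\ \bdry = \begin{pmatrix} 0 & 0 \\ 1 - ? & 0\end{pmatrix}\right)$ and compute its class; the sign subtlety from the graded-commutativity $a_ia_j = -a_ja_i$ is precisely what produces a factor of $2$ rather than no relation.

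The step I expect to be the main obstacle is pinning down the exact acyclic (or $L_k$-isomorphic) DG module that realizes $2[L_k]=0$: one must use the $\K$-algebra structure of $L_k = \K\langle a_1^{\pm 1},\dots,a_k^{\pm 1}\rangle/(a_ia_j = -a_ja_i)$, and in particular the element $a_i + a_i^{-1}$ or $1 + a_i^2$-type relations, or the characteristic-two hypothesis looming in the background, to show that a natural length-two complex of free rank-one DG modules has the right Euler class. A clean route: note $1 - a_i a_j a_i^{-1} a_j^{-1} = 1 - (-1) = 2$ as an element of $L_k$ (for $i \neq j$, using $k \geq 2$); multiplication by $2$ is then an endomorphism of the free module $L_k$, and since $2 = (\text{product of units})\cdot(\text{commutator-type expression})$ one can split its cone to extract the relation $2[L_k] = 0$. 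For $k = 1$, where $L_1 = \K[a,a^{-1}]$ is commutative and $K_0 = \Z$ with no torsion, the surjection $\Z/2 \to K_0(L_1) = \Z$ cannot exist unless $K_0(L_1)$ is being reinterpreted --- so I would double-check that the lemma as stated is used only for $k \geq 2$, or that for $k=1$ the relevant identification of $K_0$ differs; most likely the intended reading restricts attention to $k$ where the twisted Laurent structure genuinely contributes torsion, and the proof handles $k = 1$ separately or vacuously. Modulo that bookkeeping, the heart of the argument is the explicit acyclic DG module built from the anticommuting units $a_i$.
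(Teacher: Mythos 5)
Your first half is essentially the paper's argument: reduce to classes of finitely generated relatively projective DG modules via Lemma~\ref{lem p resolution}, then use the degree-zero subalgebra $\rk$ and the Farrell--Hsiang computation $K_0(\rk)\cong\Z$ to conclude that $K_0(\lk)$ is cyclic on $[\lk]$. (The paper packages this as a surjection ${g_k}_*\colon K_0(\rk)\to K_0(\lk)$ induced by $Q\mapsto Q^0\otimes_{\rk}\lk$, but the content is the same.)

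The second half has a genuine gap, and the gap is that you overlooked the one-line mechanism. In the Grothendieck group of any triangulated category one has $[M[1]]=-[M]$, from the distinguished triangle $M\to 0\to M[1]\to M[1]$. The isomorphism (\ref{iso lk}), $h_k\colon \lk\simeq\lk[1]$ given by left multiplication by the invertible closed degree-one element $a_k$, therefore forces $[\lk]=[\lk[1]]=-[\lk]$, i.e.\ $2[\lk]=0$. That is the entire proof of the torsion relation. You instead assert $[\lk[1]]=[\lk]$ as though the shift preserves the class, and then go hunting for an explicit acyclic DG module, a commutator identity $1-a_ia_ja_i^{-1}a_j^{-1}=2$, or a characteristic-two hypothesis; none of this is needed, the commutator route visibly fails for $k=1$, and $\op{char}(\K)=2$ plays no role in this lemma (it enters only later, for $K_1$). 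Your worry that the statement is false for $k=1$ comes from conflating the ordinary $K_0$ of the ungraded ring $\K[a,a^{-1}]$ (which is $\Z$) with $K_0(D^c(L_1))$ for the DG algebra with $\deg(a)=1$; the latter is $\Z/2$, exactly because of the shift isomorphism above, as the paper also notes in a remark citing \cite{Sch1}. So the lemma holds for all $k\geq 1$ with no separate case analysis.
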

\begin{proof}
By Lemma \ref{lem p resolution} we have
$$[M]=\sum\limits_n (-1)^n[Q_n(M)] \in K_0(\lk)$$
for $M$ in $D^c(\lk)$, where the sum is a finite sum.
The abelian group $K_0(\lk)$ is generated by classes $[Q]$ of finitely generated relative projective $Q$.

The inclusion $\rk \ra \lk$ is a map of unital DG algebras, where $\rk$ is viewed as a DG algebra concentrated in degree $0$.
It induces a functor $g_k: D^c(\rk) \ra D^c(\lk)$ given by tensoring with the $(\rk,\lk)$-bimodule $\lk$.
Any $Q$ is a direct summand of a finite free module $\bigoplus \lk$, and has the trivial differential.
Its degree zero component $Q^0$ is a finitely generated projective $\rk$-module, and the action of $a_k$ induces an isomorphism of $\rk$-modules $Q^{i+1} \ra \alpha(Q^{i})$.
We have
$$g_k(Q^0) = Q^0 \otimes_{\rk} \lk =\bigoplus\limits_{i \in \Z}Q^0 \otimes \rk a_k^i \cong \bigoplus\limits_{i \in \Z}Q^i = Q,$$
by (\ref{iso lkrk}), where the direct sums are taking as $\rk$-modules.
It follows that ${g_k}_*: K_0(\rk) \ra K_0(\lk)$ is surjective since ${g_k}_*([Q^0])=[Q]$.
The group $K_0(\lk)$ is generated by $[\lk]={g_k}_*([\rk])$ since $K_0(\rk) \cong \Z$ with a generator $[\rk]$, see  \cite[Theorem 27]{FH}.
Isomorphism (\ref{iso lk}) implies that $[\lk]=-[\lk]$.
Hence the map $\eta_k: \Z/2 \ra K_0(\lk)$ defined by $\eta_k(1)=[\lk]$ is surjective.
\end{proof}

According to \cite[Section 3.2.12]{Sch2}, the K-space $K(\E,w)$ of a complicial exact category $\E$ with weak equivalences $w$ is the homotopy fiber of of $BQ(\E^w) \ra BQ(\E)$, where $\E^w \subset \E$ is the full exact subcategory of objects $X$ in $\E$ for which the map $0 \ra X$ is a weak equivalence.
Here $BQ(\E)$ is the classifying space of the category $Q(\E)$ used in Quillen's Q-construction.
By definition, there is a an exact sequence:
\begin{gather} \label{eq k1}
K_1(\E^w) \ra K_1(\E) \ra K_1(\E,w) \ra K_0(\E^w) \xra{i} K_0(\E) \ra K_0(\E,w).
\end{gather}
Here, $K_i(\E^w)$ and $K_i(\E)$ are K groups of the exact categories $\E^w$ and $\E$, respectively.

From now on let $\E$ denote the complicial exact category $\B(\lk)^{pre}$, see Definition \ref{def B(A)}.
A sequence $L \ra M \ra N$ is exact if it is split exact when forgetting the differential.
The weak equivalences are the homotopy equivalences.
So $\E^w \subset \E$ is the full subcategory of contractible objects in $\E$.
By \cite[Proposition 3.2.22]{Sch2} and the definition of $K_1$ in (\ref{notation1}) and (\ref{notation2})
\begin{gather} \label{eq k0k1}
K_0(\E,w) \cong K_0'(\lk), \qquad K_1(\E,w)=K_1(\lk).
\end{gather}

Any $M \in \E$ is a finite direct sum of free modules $\lk[n]$ when forgetting the differential.
Since $\lk \cong \lk[1] \in \E$, any $M$ is isomorphic to $\lk^{\oplus r}$ for some $r \in \N$ as graded $\lk$-modules.
Its degree zero component $M^0 \cong \rk^{\oplus r}$ as free $\rk$-modules.
Since any exact sequence $L \ra M \ra N$ in $\E$ induces a split exact sequence $L^0 \ra M^0 \ra N^0$ of free $\rk$-modules, it induces a homomorphism $r: K_0(\E) \ra K_0(\rk) \cong \Z$ defined by $r([M])=[M^0] \in K_0(\rk)$.
It is clear that $r$ is surjective.

\begin{lemma} \label{lem k0E}
The group $K_0(\E) \cong \Z$ with a generator $[\lk]$.
\end{lemma}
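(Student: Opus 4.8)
The plan is to show that the surjection $r \colon K_0(\E) \twoheadrightarrow K_0(\rk) \cong \Z$ is in fact an isomorphism, by producing an explicit inverse. The natural candidate comes from the inclusion $\rk \hookrightarrow \lk$ of DG algebras (with $\rk$ concentrated in degree $0$): tensoring up sends a finitely generated projective $\rk$-module $Q^0$ to the relatively projective DG $\lk$-module $Q^0 \otimes_{\rk} \lk$, and this respects split exact sequences, hence descends to a homomorphism $\phi \colon K_0(\rk) \to K_0(\E)$ with $\phi([\rk]) = [\lk]$. First I would check that $r \circ \phi = \mathrm{id}$ on $K_0(\rk) \cong \Z$: this is immediate since $(Q^0 \otimes_{\rk} \lk)^0 \cong Q^0$ as $\rk$-modules, using the decomposition $\lk \cong \bigoplus_{i\in\Z} \rk a_k^i$ from (\ref{iso lkrk}). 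Since $r$ is already known to be surjective, this alone forces $r$ to be injective once we know $\phi$ is surjective, so the crux is surjectivity of $\phi$.

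For surjectivity of $\phi$, I would argue that every object $M$ of $\E$ has class $[M] = \phi([M^0])$ in $K_0(\E)$. The key point is that $M$, forgetting the differential, is a finite sum of shifts $\lk[n]$, and since $\lk \cong \lk[1]$ in $\E$ via multiplication by the invertible degree-one element $a_k$, $M$ is isomorphic \emph{as a graded module} to $\lk^{\oplus r}$. The differential on $M$ is then an $\rk$-linear (degree one) endomorphism data; concretely $M$ is reconstructed from the $\rk$-module $M^0$ together with the $a_k$-twisting isomorphisms $M^{i+1} \cong \alpha(M^i)$ of (\ref{iso twist}). Comparing $M$ with $M^0 \otimes_{\rk} \lk$ (which has the same underlying graded $\lk$-module and the zero differential after the identification, or more precisely is homotopy equivalent to the semisimplification), one sees that $M$ and $\phi([M^0])$ have the same class: any two complicial objects with isomorphic underlying graded modules differ, in $K_0$, by classes of contractible objects, and contractible objects are identified with $0$ in $K_0(\E,w)$ — but here we are in $K_0(\E)$, so I would instead exhibit a direct filtration of $M$ by graded-split $\rk$-sub-structures whose subquotients are of the form $Q^0 \otimes_{\rk} \lk$, mirroring Keller's filtration in Theorem~\ref{thm Keller}. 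Summing the subquotient classes gives $[M] = \phi([M^0])$.

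The main obstacle I anticipate is the last step: making precise that the differential on an object of $\E$ does not contribute to its $K_0(\E)$-class beyond what is seen on $M^0$. One has to be careful that we are computing $K_0$ of the exact category $\E = \B(\lk)^{pre}$ (split exact sequences of DG modules), \emph{not} of $(\E, w)$, so contractible objects are genuinely nonzero here; the argument must instead build an admissible filtration of $M$ itself whose associated graded consists of extended modules $Q_n^0 \otimes_{\rk} \lk[n]$. Since $\rk$ is right regular Noetherian (Farrell--Hsiang, \cite[Theorems 25, 27]{FH}), $M^0$ has a finite resolution by finitely generated projectives, and pulling this resolution up through the $a_k$-twist and applying the $\E$-version of Keller's construction yields exactly such a finite filtration. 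Then in $K_0(\E)$ we get $[M] = \sum_n (-1)^n [Q_n^0 \otimes_{\rk} \lk] = \phi\big(\sum_n (-1)^n [Q_n^0]\big) = \phi([M^0])$, where the last equality uses $K_0(\rk) \cong \Z$. This proves $\phi$ surjective, hence $r$ is an isomorphism and $K_0(\E) \cong \Z$ with generator $[\lk] = \phi([\rk])$.
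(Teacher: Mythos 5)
Your overall architecture matches the paper's: both proofs exhibit a map $\phi$ with $\phi(1)=[\lk]$ (your induction map $[Q^0]\mapsto [Q^0\otimes_{\rk}\lk]$ agrees with the paper's $\phi$ on the generator $[\rk]$ of $K_0(\rk)\cong\Z$), check $r\circ\phi=\mathrm{id}$, and reduce everything to surjectivity of $\phi$. The computation $r\circ\phi=\mathrm{id}$ via $(Q^0\otimes_{\rk}\lk)^0\cong Q^0$ is fine. The gap is in the surjectivity step.

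You correctly flag the danger yourself --- $K_0(\E)$ is the $K_0$ of the exact category whose exact sequences are those split after forgetting the differential, so homotopy equivalences and quasi-isomorphisms do \emph{not} preserve classes --- but your proposed fix does not avoid it. Keller's Theorem~\ref{thm Keller} filters a \emph{replacement} $P(M)$ built from a projective resolution of $H^*(M)$; $P(M)$ is isomorphic to $M$ only in $D(\lk)$, and $[P(M)]\neq[M]$ in $K_0(\E)$ in general: for $M=C(\lk)$ one has $H^*(M)=0$, hence $P(M)=0$, while $[C(\lk)]=2[\lk]\neq 0$, which is exactly the point of the discussion surrounding Lemma~\ref{lem k0Ew}. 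Your alternative of resolving $M^0$ instead of $H^0(M)$ collapses rather than helps: $M^0\cong\rk^{\oplus r}$ is already free, so the ``resolution'' has one step and the purported filtration would assert $M\cong M^0\otimes_{\rk}\lk$ as DG modules, i.e.\ that every object of $\E$ is isomorphic to one with zero differential --- false for $C(\lk)$. So no admissible filtration of $M$ itself is actually produced by your argument. The filtration you need comes for free from the definition of $\E=\B(\lk)^{pre}$: every object is an iterated cone of shifts of $\lk$, i.e.\ a twisted complex $\bigl(\bigoplus_{i=1}^{r}\lk[n_i],\,d\bigr)$ with $d$ triangular, and the partial direct sums form a finite filtration that is admissible in $\E$ with subquotients $\lk[n_i]$. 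Hence $[M]=\sum_i[\lk[n_i]]=r\,[\lk]=\phi([M^0])$, using $\lk\cong\lk[1]$ via multiplication by $a_k$. With that substitution your proof closes and coincides with the paper's.
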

\begin{proof}
Define a homomorphism $\Z[q,q^{-1}] \ra K_0(\E)$ by mapping $q^{n}$ to the class $[\lk[n]]$ for $n\in\Z$.
It is surjective since any object $M \in \E$ admits a finite filtration whose subquotients are finite direct sums of free modules $\lk[n]$.
The map factors through $\Z[q,q^{-1}] \xra{q=1} \Z$ since $\lk \cong \lk[1] \in \E$.
Let $\phi: \Z \ra K_0(\E)$ denote the induced map which is surjective.
It is clear that $r$ and $\phi$ are inverse to each other.
\end{proof}

We now consider $K_0(\E^w)$.
Any object $M \in \E^w$ is contractible.
There exists a degree $-1$ map $h: M \ra M$ of graded $\lk$-modules such that $dh+hd=1$ on $M$.
Thus, $\Ker d_M = \IM d_M$.
Moreover, $\Ker d_M$ and $h(\IM d_M)$ are graded $\lk$-submodules of $M$.
For any $m \in \IM d_M \cap h(\IM d_M)$, $m=h(n)$ for some $n \in \IM d_M$ so that $n=dh(n)+hd(n)=d(m)=0$ and $m=0$.
Thus $\IM d_M \cap h(\IM d_M)=\{0\}$.
As graded $\lk$-modules, $M \cong (\IM d_M \oplus h(\IM d_M))$ since $m=dh(m)+hd(m)$.
Moreover, $\IM d_M$ is a DG $\lk$-submodule of $M$ with the trivial differential.
As DG $\lk$-modules,
\begin{align} \label{iso dg}
\begin{split}
M & \cong (h(\IM d_M) \oplus \IM d_M, d=d_M:h(\IM d_M) \ra \IM d_M) \\
& \cong  (\IM d_M[1] \oplus \IM d_M, d=id: \IM d_M[1] \ra \IM d_M).
\end{split}
\end{align}

The degree zero component $(\IM d_M)^0$ is a direct summand of a finitely generated free $\rk$-module $M^0$.
Thus $(\IM d_M)^0$ is a finitely generated projective $\rk$-module.
Recall from \cite[Theorem 27]{FH} that $K_0(\rk) \cong \Z$ with a generator of the class $[\rk]$ of the free module $\rk$.
It follows that every finitely generated projective $\rk$-module $P$ is stably free, i.e. $P\oplus \rk^m \cong \rk^n$ for some $m,n \in \N$.
Thus $(\IM d_M)^0$ is a finitely generated stably free $\rk$-module so that $\IM d_M$ is a stably free DG $\lk$-module.
Let $C(\lk)=Cone(\lk \xra{id} \lk) \in \E^w$, where two $\lk$'s are in degrees $-1$ and $0$.
There exists $m,n \in \N$ such that $M\oplus C(\lk)^{\oplus m} \cong C(\lk)^{\oplus n}$ by (\ref{iso dg}).

Define a homomorphism $\psi: \Z \ra K_0(\E^w)$ by $\psi(1)=[C(\lk)]$.
Then $\psi$ is surjective.

For $M \in \E^w$, let
\begin{gather} \label{def t}
t(M)=[(\IM d_M)^0] \in K_0(\rk) \cong \Z.
\end{gather}
For any exact sequence $L \xra{f} M \xra{g} N$ in $\E^w$, there is an induced sequence
$$\IM d_L \xra{f} \IM d_M \xra{g} \IM d_N.$$
We claim that it is a short exact sequence of graded $\lk$-modules.

\n(1) The first map is clearly injective.

\n(2) The last map is surjective. For any $n=d_N(n') \in \IM d_N$, $n'=g(m')$ for some $m' \in M$ since $g$ is surjective. So $n=d_N(g(m'))=g(d_M(m')) \in g(\IM d_M)$.

\n(3) The middle term is exact. For any $m \in \IM d_M \cap \Ker(g)$, $m=f(l)$ for some $l \in L$.  Then $d_M(m)=d_M(f(l))=f(d_L(l))=0$ implies that $d_{L}(l)=0$ since $f$ is injective.
So $l \in \Ker d_L=\IM d_L$ and $m=f(l) \in f(\IM d_L)$.

Then $(\IM d_L)^0 \xra{f} (\IM d_M)^0 \xra{g} (\IM d_N)^0$ is a short exact sequence of finitely generated stably free $\rk$-modules.
Therefore, the map $t$ given by (\ref{def t}) induces a homomorphism $t: K_0(\E^w) \ra K_0(\rk) \cong \Z$ which maps $[C(\lk)]$ to $1$.
It is clear that $t$ and $\psi$ are inverse to each other.
We have the following lemma.

\begin{lemma} \label{lem k0Ew}
The group $K_0(\E^w) \cong \Z$ with a generator $[C(\lk)]$.
\end{lemma}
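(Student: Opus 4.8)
The plan is to show $K_0(\E^w) \cong \Z$ with generator $[C(\lk)]$ by constructing mutually inverse maps $\psi : \Z \to K_0(\E^w)$ and $t : K_0(\E^w) \to \Z$. The map $\psi$ is defined by $\psi(1) = [C(\lk)]$, where $C(\lk) = \mathrm{Cone}(\lk \xra{id} \lk)$. The map $t$ is defined by sending $[M]$ to $[(\IM d_M)^0] \in K_0(\rk) \cong \Z$, where $\rk$ is the degree-zero subalgebra from \eqref{def rk}, using the Farrell--Hsiang computation $K_0(\rk) \cong \Z$ from \cite[Theorem 27]{FH}.

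First I would establish that $t$ is well defined on $K_0(\E^w)$. The essential structural input, already recorded in the discussion preceding the lemma, is the decomposition \eqref{iso dg}: every contractible $M \in \E^w$ splits as a DG $\lk$-module as $(\IM d_M[1] \oplus \IM d_M)$ with differential the identity, and $(\IM d_M)^0$ is a finitely generated projective (indeed stably free) $\rk$-module. To see $t$ respects exact sequences, I would use the claim (verified in points (1)--(3) above) that an exact sequence $L \to M \to N$ in $\E^w$ induces a short exact sequence $\IM d_L \to \IM d_M \to \IM d_N$ of graded $\lk$-modules, hence a short exact sequence $(\IM d_L)^0 \to (\IM d_M)^0 \to (\IM d_N)^0$ of finitely generated projective $\rk$-modules. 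Additivity of the class in $K_0(\rk)$ then gives $t([M]) = t([L]) + t([N])$, so $t$ descends to a group homomorphism on $K_0(\E^w)$, and clearly $t([C(\lk)]) = [(\IM d_{C(\lk)})^0] = [\rk] = 1$.

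Next I would check $\psi$ is surjective. Given $M \in \E^w$, the module $(\IM d_M)^0$ is a direct summand of the free $\rk$-module $M^0 \cong \rk^{\oplus r}$, hence finitely generated projective; by Farrell--Hsiang every finitely generated projective $\rk$-module is stably free, so $(\IM d_M)^0 \oplus \rk^m \cong \rk^n$ for some $m,n$. Using the splitting \eqref{iso dg} and the identification $C(\lk) \cong (\rk[1]\oplus \rk, d = id)$ at the level of degree-zero parts, this stable isomorphism upgrades to a DG $\lk$-module isomorphism $M \oplus C(\lk)^{\oplus m} \cong C(\lk)^{\oplus n}$, whence $[M] = (n-m)[C(\lk)]$ in $K_0(\E^w)$. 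Thus every class is an integer multiple of $[C(\lk)]$ and $\psi$ is surjective. Finally, $t \circ \psi = \mathrm{id}_{\Z}$ since $t([C(\lk)]) = 1$, and $\psi \circ t = \mathrm{id}$ on generators, so $\psi$ and $t$ are mutually inverse isomorphisms.

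The main obstacle is the verification that $t$ is genuinely well defined, i.e.\ that the assignment $M \mapsto [(\IM d_M)^0]$ is additive on short exact sequences — the delicate point being that $\IM d$ is not an obviously exact functor, so one must argue directly (as in points (1)--(3)) that exactness of $L \to M \to N$ in $\E^w$ forces exactness of the image-of-differential sequence. Everything else reduces to the already-established structural decomposition \eqref{iso dg} and the cited Farrell--Hsiang facts about $\rk$ (regularity, $K_0(\rk) \cong \Z$, stable freeness of projectives).
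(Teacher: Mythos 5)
Your proposal is correct and follows essentially the same route as the paper: the paper's argument (given in the discussion preceding the lemma) likewise constructs $\psi(1)=[C(\lk)]$, proves surjectivity via the splitting $M \cong (\IM d_M[1]\oplus \IM d_M, d=\mathrm{id})$ together with stable freeness of finitely generated projective $\rk$-modules from \cite[Theorem 27]{FH}, and defines the inverse $t([M])=[(\IM d_M)^0]\in K_0(\rk)\cong\Z$, checking additivity by the same three-step verification that $\IM d_L \to \IM d_M \to \IM d_N$ is short exact. No substantive differences.
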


The map $i: K_0(\E^w) \ra K_0(\E)$ in the exact sequence (\ref{eq k1}) takes the generator $[C(\lk)]$ to $2[\lk]$, see Lemma \ref{lem k0E}.
In particular, $i$ is injective and not surjective.

\begin{prop} \label{prop K0 of L}
The group $K_0(\lk)\cong\Z/2$ with a generator $[\lk]$.
\end{prop}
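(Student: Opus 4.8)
The strategy is to upgrade the surjection $\eta_k \colon \Z/2 \to K_0(L_k)$ of Lemma \ref{lem K0 of L} to an isomorphism. Because $\eta_k$ is surjective, $K_0(L_k)$ is a quotient of $\Z/2$, hence equals $0$ or $\Z/2$; and any surjection from $\Z/2$ onto a nonzero group is automatically an isomorphism. So it is enough to prove $K_0(L_k) \neq 0$, and then $\eta_k$ is an isomorphism carrying $1$ to $[L_k]$, which is exactly the assertion of the proposition.

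To see $K_0(L_k) \neq 0$ I would read $K_0(\E, w)$ off from the localization sequence (\ref{eq k1}) using the data already assembled above. By Lemma \ref{lem k0E}, $K_0(\E) \cong \Z$ with generator $[L_k]$; by Lemma \ref{lem k0Ew}, $K_0(\E^w) \cong \Z$ with generator $[C(L_k)]$; and the map $i \colon K_0(\E^w) \to K_0(\E)$ sends $[C(L_k)]$ to $2[L_k]$, so $\IM(i) = 2\Z \subsetneq \Z = K_0(\E)$. Exactness of (\ref{eq k1}) at $K_0(\E)$ gives $\ker\!\big(K_0(\E) \to K_0(\E, w)\big) = \IM(i) = 2\Z$, so the map $K_0(\E) \to K_0(\E, w)$ is nonzero and therefore $K_0(\E, w) \neq 0$. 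The identification (\ref{eq k0k1}) gives $K_0'(L_k) \cong K_0(\E, w) \neq 0$, and since the comparison map $K_0'(L_k) \to K_0(L_k)$ is injective (Thomason \cite[Corollary 2.3]{Th}, recalled just after (\ref{notation2})), we conclude $K_0(L_k) \neq 0$, as needed.

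Almost all of the substance has already gone into Lemmas \ref{lem k0E} and \ref{lem k0Ew} and into the computation $i([C(L_k)]) = 2[L_k]$; what remains is a short diagram chase, so I do not expect a genuine obstacle here. The one point that needs care is bookkeeping: one must apply (\ref{eq k1}) to $\E = \B(L_k)^{pre}$ together with its full subcategory $\E^w$ of contractible objects, so that the identification (\ref{eq k0k1}) of $K_0(\E, w)$ with $K_0'(L_k)$ (and of $K_1(\E, w)$ with $K_1(L_k)$) and the injectivity of $K_0'(L_k) \to K_0(L_k)$ are being invoked for the correct categories. These are facts imported from Schlichting and Thomason rather than proved here.
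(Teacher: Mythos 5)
Your proposal is correct and takes essentially the same route as the paper's own proof: both deduce $K_0(\lk)\neq 0$ from the failure of surjectivity of $i\colon K_0(\E^w)\to K_0(\E)$ in the localization sequence (\ref{eq k1}), combined with the identification $K_0'(\lk)\cong K_0(\E,w)$ and the injectivity of $K_0'(\lk)\to K_0(\lk)$, and then conclude that the surjection $\eta_k$ of Lemma \ref{lem K0 of L} must be an isomorphism. Your write-up merely makes the short diagram chase at $K_0(\E)$ explicit; there is no substantive difference.
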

\begin{proof}
Recall from (\ref{notation1}) that $K_0'(\lk) \cong K_0(\op{Ho}(\C^{pre}))$, and $K_0'(\lk) \ra K_0(\lk)$ is injective by \cite[Corollary 2.3]{Th}.
The group $K_0'(\lk)$ is nonzero since $i$ is not surjective in the exact sequence (\ref{eq k1}).
It implies that $K_0(\lk)$ is nonzero.
Hence the surjection $\eta_k: \Z/2 \ra K_0(\lk)$ in Lemma \ref{lem K0 of L} is an isomorphism.
\end{proof}

Define a map $\rho_k: \Z \ra K_0(A_k)$ of abelian groups by $\rho_k(1)=[A_k]$.
\begin{prop} \label{prop surj Ak}
The map $\rho_k$ is surjective for all $k \geq 0$.
\end{prop}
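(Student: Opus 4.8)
The plan is to prove this by induction on $k$, using the localization sequence~(\ref{les}) together with the computation $K_0(\lk)\cong\Z/2$ from Proposition~\ref{prop K0 of L}. The base case $k=0$ is immediate: since $A_0\cong\K$ by~(\ref{def A0}), one has $K_0(A_0)=K_0(D^c(\K))\cong K_0(\K)\cong\Z$, freely generated by $[A_0]$, so $\rho_0$ is even an isomorphism.

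For the inductive step I would assume $\rho_{k-1}$ is surjective, i.e.\ $K_0(A_{k-1})=\Z\cdot[A_{k-1}]$, and reduce the whole statement to the single identity
$${i_k}_*([A_{k-1}])=2[A_k]\qquad\text{in }K_0(A_k).$$
To establish it, first observe that $i_k(A_{k-1})=A_{k-1}\ot^{\mf{L}}_{A_{k-1}}I_k=I_k=e_kA_k$, since $A_{k-1}$ is free over itself, so that ${i_k}_*([A_{k-1}])=[e_kA_k]$. The split decomposition $A_k=e_kA_k\oplus(1-e_k)A_k$ of graded right $A_k$-modules gives $[A_k]=[e_kA_k]+[(1-e_k)A_k]$ in $K_0(A_k)$. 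Now $1-e_k=(xy)\ot 1_{k-1}$ by the local relation $xy+z\wtz=1_X$, and I expect left multiplication by $x\ot 1_{k-1}$ to define an isomorphism $A_k[-1]\xra{\sim}(1-e_k)A_k$ of right $A_k$-modules with inverse left multiplication by $y\ot 1_{k-1}$; checking this amounts to the relations $yx=1_X$ (so $(y\ot 1_{k-1})(x\ot 1_{k-1})=1_k$) and $xyx=x$ (so $(x\ot 1_{k-1})(y\ot 1_{k-1})$ acts as the identity on $(1-e_k)A_k$). Hence $[(1-e_k)A_k]=[A_k[-1]]=-[A_k]$, and therefore $[e_kA_k]=[A_k]-[(1-e_k)A_k]=2[A_k]$. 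This identity is really a shadow of the isomorphism~(\ref{eq iso2}); alternatively one could argue through the equivalence $D^c(A_k)\simeq D^c(\C_k)$ and the monoidal structure on $\C$.

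Granting the identity, surjectivity of $\rho_k$ follows by a short diagram chase. Exactness of~(\ref{les}) at $K_0(A_k)$ gives $\IM({i_k}_*)=\Ker({j_k}_*)$, and the inductive hypothesis together with the identity shows $\IM({i_k}_*)={i_k}_*(\Z\cdot[A_{k-1}])=2\Z\cdot[A_k]$. On the other hand ${j_k}_*([A_k])=[A_k\ot^{\mf{L}}_{A_k}\lk]=[\lk]$ generates $K_0(\lk)\cong\Z/2$ by Proposition~\ref{prop K0 of L}, so ${j_k}_*$ is surjective. Given any $y\in K_0(A_k)$, pick $m\in\{0,1\}$ with ${j_k}_*(y)={j_k}_*(m[A_k])$; then $y-m[A_k]\in\Ker({j_k}_*)=2\Z\cdot[A_k]$, whence $y\in\Z\cdot[A_k]$, as desired.

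The only non-formal step, and the one I expect to demand the most care, is the identity ${i_k}_*([A_{k-1}])=2[A_k]$: one must correctly identify $i_k(A_{k-1})$ with the summand $e_kA_k\subset A_k$ and recognize the complementary summand $(1-e_k)A_k$ as the shift $A_k[-1]$, and it is precisely at this point that the defining relations of $\C$ (equivalently, the isomorphism $X^\we\cong\mb^\we\oplus X^\we[-1]$) inject the crucial factor of $2$. Everything downstream is formal manipulation of the localization sequence using $K_0(\lk)\cong\Z/2$.
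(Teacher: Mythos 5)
Your proof is correct and follows essentially the same route as the paper: the base case $A_0\cong\K$, the identity ${i_k}_*[A_{k-1}]=2[A_k]$ coming from the splitting $A_k\cong e_kA_k\oplus A_k[-1]$ (the paper cites the analogue of (\ref{eq iso2}), which is exactly the decomposition you verify directly from the relations $yx=1_X$ and $xy+z\wtz=1_X$), and an inductive diagram chase in the localization sequence (\ref{les}) using $K_0(L_k)\cong\Z/2$.
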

\begin{proof}
There is a commutative diagram from the exact sequence (\ref{les}) and Proposition \ref{prop K0 of L}:
$$\xymatrix{
K_0(A_{k-1}) \ar[r]^{{i_k}_*} & K_0(A_k) \ar[r]^{{j_k}_*} & K_0(\lk)  &\\
\Z \ar[r]^{2} \ar[u]^{\rho_{k-1}} & \Z \ar[r] \ar[u]^{\rho_{k}} & \Z_2  \ar[u]^{\cong} \ar[r] &0.
}$$
The first square commutes because $[A_k]={i_k}_*[A_{k-1}]+[A_k[-1]]$ by an analogue of (\ref{eq iso2}).
The second square commutes since ${j_k}_*[A_k]=[L_k]$.
The map $\rho_0$ is an isomorphism since $A_0 \cong \K$ by (\ref{def A0}).
By induction on $k$ one can prove that $\rho_k$ is surjective using the snake lemma.
\end{proof}

Taking the direct limit of $\rho_k$, we have a surjective map $\rho: \varinjlim \Z \ra \varinjlim K_0(A_k)$ of abelian groups, where $\varinjlim \Z \cong \Z[\frac{1}{2}]$, and $\varinjlim K_0(A_k)$ with respect to ${i_k}_*$ is naturally isomorphic to $K_0(\C)$ as abelian groups, see (\ref{iso K0 Ak}).
The monoidal structure on $D^c(\C)$ induces a ring structure on $K_0(\C)$, where $[A_k][A_{k'}]=[A_{k+k'}] \in K_0(\C)$.
Then $\rho: \Z[\frac{1}{2}] \ra K_0(\C)$ is a ring homomorphism such that $\rho(1)=[A_0]$, and $\rho(\frac{1}{2})=[A_1]$.

\begin{thm} \label{thm surj K0C}
There is a surjective homomorphism of rings: $\rho: \Z[\frac{1}{2}] \ra K_0(\C)$.
\end{thm}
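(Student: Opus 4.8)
The plan is to assemble the results established in the preceding subsections; at this point there is essentially no further computation to do. By the identification (\ref{iso K0 Ak}), $K_0(\C)\cong\varinjlim K_0(A_k)$ with transition maps ${i_k}_*$. Proposition~\ref{prop surj Ak} provides, for each $k\ge 0$, a surjection $\rho_k\colon\Z\to K_0(A_k)$ with $\rho_k(1)=[A_k]$. The left-hand square of the commutative diagram in the proof of that proposition says precisely that ${i_k}_*\circ\rho_{k-1}=\rho_k\circ(\cdot 2)$, reflecting the relation $[A_k[-1]]=-[A_k]$ which forces ${i_k}_*[A_{k-1}]=2[A_k]$. Thus the $\rho_k$ form a morphism of directed systems from $(\Z\xra{2}\Z\xra{2}\Z\xra{2}\cdots)$ to $(K_0(A_k),{i_k}_*)$, and passing to colimits gives
\begin{gather*}
\rho\colon\ \varinjlim(\Z\xra{2}\Z\xra{2}\cdots)\ \lra\ \varinjlim K_0(A_k).
\end{gather*}
The source is canonically $\Ztwo$ and the target is canonically $K_0(\C)$. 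A colimit of surjections is a surjection --- every class in $K_0(\C)$ is represented in some $K_0(A_k)=\IM\rho_k$ --- so $\rho$ is surjective, with $\rho(\tfrac{1}{2^k})=[A_k]$ for the image $\tfrac{1}{2^k}$ of $1$ in the $k$-th copy of $\Z$; in particular $\rho(1)=[A_0]$ and $\rho(\tfrac12)=[A_1]$.

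It remains to check that $\rho$ respects multiplication. The monoidal structure on $D^c(\C)$ induces the ring structure on $K_0(\C)$, and under the equivalence $h_k^c\colon D^c(A_k)\simeq D^c(\C_k)$ composed with $g_k^c$, the free rank-one module $A_k$ corresponds to the object $\xkw$ of $D^c(\C)$. Since $\xkw\otimes(X^\we)^{\otimes k'}\cong(X^\we)^{\otimes(k+k')}$ (represented modules respect the tensor product, and $X^{\otimes k}\otimes X^{\otimes k'}=X^{\otimes(k+k')}$ in $\C$), we get $[A_k]\cdot[A_{k'}]=[A_{k+k'}]$ in $K_0(\C)$, with $[A_0]=[\mb^\we]$ the multiplicative unit. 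Under the identification $\Ztwo=\varinjlim(\Z\xra{2}\cdots)$ the ring structure is the evident one, the $k$-th copy of $\Z$ sitting inside as $\tfrac{1}{2^k}\Z$, so multiplicativity of $\rho$ need only be verified on the elements $\tfrac{1}{2^k}$, where it reads $[A_k][A_{k'}]=[A_{k+k'}]$, already established. Together with $\rho(1)=[A_0]=[\mb^\we]$ this shows $\rho$ is a unital ring homomorphism.

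The genuine content of the theorem lies in the earlier subsections: the computation $K_0(\lk)\cong\Z/2$ (Proposition~\ref{prop K0 of L}), resting on the localization machinery of Subsection~\ref{Sec K} and the $K$-theory of the twisted Laurent rings $\rk$, and the inductive snake-lemma argument establishing surjectivity of $\rho_k$ (Proposition~\ref{prop surj Ak}). The only care required here is the bookkeeping already indicated: identifying the transition maps of the source directed system as multiplication by $2$, and noting that the colimit description of $\Ztwo$ is compatible with its ring structure, so that verifying multiplicativity on the classes $[A_k]$ is enough.
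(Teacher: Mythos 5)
Your proposal is correct and follows essentially the same route as the paper: the paper likewise obtains $\rho$ as the colimit of the surjections $\rho_k$ over the directed system $(\Z\xra{2}\Z\xra{2}\cdots)$, using the relation ${i_k}_*[A_{k-1}]=2[A_k]$ from the left square of the diagram in Proposition~\ref{prop surj Ak}, and verifies the ring structure via $[A_k][A_{k'}]=[A_{k+k'}]$ induced by the monoidal structure. You merely spell out in slightly more detail the reduction of multiplicativity to the classes $[A_k]$, which is consistent with the paper's argument.
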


\vspace{.2cm}
To show that $\rho$ is an isomorphism, we need to assume that $\op{char}(\K)=2$.
The exact sequence (\ref{eq k1}) gives
$K_1(\E^w) \ra K_1(\E) \ra K_1(\lk) \ra 0,$ since $i$ is injective.

\begin{prop} \label{prop K1 of L}
If $\op{char}(\K)=2$, then $K_1(\lk)$ is $2$-torsion.
\end{prop}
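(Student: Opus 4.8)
The plan is to compare the K-theory of $\lk$ with that of its degree-zero subalgebra $\rk$ and exploit the fact that $\rk$ is an iterated (twisted) Laurent polynomial ring over $\K$, for which higher K-theory is well understood via the fundamental theorem. Recall from the exact sequence (\ref{eq k1}) and the identification (\ref{eq k0k1}) that, since $i$ is injective, we have a surjection $K_1(\E) \twoheadrightarrow K_1(\lk)$ where $\E = \B(\lk)^{pre}$. So it suffices to show that $K_1(\E)$ maps onto a $2$-torsion group, and in fact I would aim to pin down $K_1(\E)$ itself.

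First I would identify $K_1(\E)$ with a more classical K-group. Just as in the proof of Lemma~\ref{lem k0E}, every object of $\E$ is, after forgetting the differential, a finite direct sum of shifts $\lk[n]$, and $\lk \cong \lk[1]$ in $\E$ via (\ref{iso lk}); the filtration argument used there should promote to show that the exact category $\E$ is K-equivalent to the exact category of finitely generated free (equivalently, by Lemma~\ref{lem k0Ew}-type reasoning, stably free) graded $\lk$-modules with $\lk$-module maps, which in turn — using the degree-zero functor $M \mapsto M^0$ and the isomorphism (\ref{iso lkrk}) expressing $\lk$ as an $\alpha$-twisted Laurent extension $\rk\langle a_k^{\pm 1}\rangle$ — is K-equivalent to the category of finitely generated free $\rk$-modules. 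Hence $K_1(\E) \cong K_1(\rk)$. Then the key computation is of $K_1(\rk)$: since $\rk \cong R_{k-2}[b_{k-1}^{\pm 1}]$ is an $\alpha$-twisted finite Laurent series ring over the regular ring $R_{k-2}$ (Farrell--Hsiang, \cite[Theorem 25]{FH}), the twisted fundamental theorem of algebraic K-theory gives a splitting $K_1(\rk) \cong K_1(R_{k-2})^{\alpha} \oplus K_0(R_{k-2})_{\alpha}$ (coinvariants/invariants under the automorphism $\alpha$); iterating down to $R_0 = \K$, and using that $K_1(\K) = \K^\times$, $K_0(\K) = \Z$, one computes $K_1(\rk)$ explicitly. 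The automorphism $\alpha$ acts by $b_i \mapsto -b_i$, and this is exactly where $\op{char}(\K) = 2$ enters: when $-1 = 1$ the automorphism $\alpha$ is the identity, so the "$\epsilon = -1$" twisting that normally contributes a $\Z/2$ (coming from the sign, via the Nil-free fundamental theorem for regular rings) collapses — or rather, the contribution that survives is precisely $2$-torsion. More concretely, in characteristic $2$ one expects $K_1(\rk) \cong \Z/2 \oplus (\text{units and higher contributions from } \K)$; the point is only that the piece which surjects onto $K_1(\lk)$ — after killing the image of $K_1(\E^w)$ — is a quotient of a group all of whose relevant part is $2$-torsion.

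Alternatively, and perhaps more cleanly, I would argue directly on $K_1(\lk)$ using the isomorphism $h_k : \lk \xrightarrow{\sim} \lk[1]$ of (\ref{iso lk}): multiplication by the closed degree-one unit $a_k$ gives, on the level of $K_1$ of the category $\E$, an automorphism of the identity functor, and the relation $\lk \cong \lk[1]$ forces the shift functor $[1]$ to act trivially on $K_1(\lk)$; combined with the standard fact that $[1]$ acts by $-1$ on K-theory of a stable/triangulated category (and the subtlety that on $K_1$ one must track this on the level of the $Q$-construction or Waldhausen construction), this yields $2$-torsion. The cleanest route is to observe that the class $[a_k] \in K_1(\lk)$ of the unit $a_k$ satisfies, on one hand, $[a_k] + [a_k] = [a_k^2]$, and on the other hand $a_k^2$ lies in the degree-zero subring $\rk$ and in fact acts as an element of $R_{k-2}$ up to sign; chasing through, the obstruction to $[a_k]$ having infinite order is governed entirely by whether $a_k^2$ is a product of commutators, which in characteristic $2$ it is not in general but $2[a_k]$ is forced to land in $K_1(\rk)$, and the image of the transfer/restriction composite $K_1(\rk) \to K_1(\lk) \to K_1(\rk)$ is multiplication by $1 + \alpha = 1 + 1 = 2$...

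\noindent The main obstacle I anticipate is precisely this last sign-versus-characteristic bookkeeping: making rigorous the claim that the "$-1$" appearing in the twisting automorphism $\alpha$ and the "$-1$" coming from the shift $[1]$ on K-groups conspire, in characteristic $2$, to produce exactly a factor of $2$ (and not, say, to cancel and give a torsion-free contribution). This requires care with the twisted Bass--Heller--Swan / fundamental theorem in the $\epsilon$-twisted setting, and with the distinction between $K_1$ of the exact category $\E$ and $K_1$ of the underlying ring — the Waldhausen-style definition of $K_*$ of a DG category used in (\ref{notation1})--(\ref{notation2}) means one should phrase the fundamental theorem at the level of the complicial exact categories rather than cite ring-theoretic statements directly. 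I would handle this by setting up the restriction functor $D^c(\lk) \to D^c(\rk)$ along $\rk \hookrightarrow \lk$ together with its adjoint (the induction functor $g_k$ of Lemma~\ref{lem K0 of L}), observing these are compatible with the $a_k$-multiplication isomorphism, and extracting the relation $(1+\alpha_*)\cdot = 0$ on the relevant subquotient of $K_1(\lk)$, which in characteristic $2$ reads $2\cdot = 0$.
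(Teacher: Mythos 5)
Your proposal has the right skeleton but a genuine gap at its center. The two facts you need are exactly the two facts the paper establishes: (i) for every class $\alpha\in K_1(\E)$ the sum $\alpha+\alpha[1]$ lies in the image of $K_1(\E^w)\ra K_1(\E)$, and (ii) in characteristic $2$ the central, closed, invertible degree-one element $a_1$ gives a natural isomorphism $h_M:M\ra M[1]$, $h_M(m)=ma_1$, forcing $\alpha=\alpha[1]$. You state both, but you prove neither, and you explicitly defer the ``sign-versus-characteristic bookkeeping'' that is the entire content of the proposition. The missing ingredient is a workable presentation of $K_1$ of the exact category $\E$: the paper uses Nenashev's description of $K_1(\E)$ by double short exact sequences, in which (i) follows from the $3\times 3$ diagram built from $M\ra C(M)\ra M[1]$ (giving $\alpha-C(\alpha)+\alpha[1]=\beta_M-\beta_N+\beta_L=0$, with $C(\alpha)$ supported on contractible objects, hence in the image of $K_1(\E^w)$) and (ii) follows from a second $3\times 3$ diagram built from the isomorphisms $h_M$. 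Without some such presentation, ``the shift acts by $-1$'' is only a $K_0$-level statement and does not by itself descend to $K_1$.

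Moreover, the two routes you sketch for filling the gap do not work as stated. The identification $K_1(\E)\cong K_1(\rk)$ via $M\mapsto M^0$ is unjustified: objects of $\E$ carry nontrivial differentials (e.g.\ the cones $C(\lk)$), the exact structure is ``split after forgetting the differential,'' and nothing in your filtration sketch shows the comparison is a $K_1$-equivalence; even granting it, you would still have to determine which part of $K_1(\rk)$ survives to the cokernel of $K_1(\E^w)\ra K_1(\E)$, which is the same bookkeeping in disguise. The transfer argument is unavailable: $\lk\cong\bigoplus_{i\in\Z}\rk a_k^i$ is not finitely generated as an $\rk$-module, so there is no restriction-induced transfer $K_1(\lk)\ra K_1(\rk)$ whose composite with induction would be $1+\alpha_*$. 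I would abandon both detours and instead carry out the two finite diagram checks in Nenashev's presentation.
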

\begin{proof}
It is enough to show that $2\alpha$ is in the image of $K_1(\E^w) \ra K_1(\E)$ for any $\alpha \in K_1(\E)$.
We use Nenashev's presentation of $K_1(\E)$ of the exact category $\E$, see \cite{Ne}.
Any $\alpha \in K_1(\E)$ is represented by a double short exact sequence
$$\xymatrix{
M \ar@<.5ex>[r]^{f_1} \ar@<-.5ex>[r]_{g_1}  & N \ar@<.5ex>[r]^{f_2} \ar@<-.5ex>[r]_{g_2} & L
}$$

Consider the cone $C(M)$ of $id: M \ra M$, and morphisms $i_M: M \ra C(M)$ and $j_M: C(M) \ra M[1]$ in $\E$.
Any morphism $f: M \ra N$ induces two morphisms $f[1]: M[1] \ra N[1]$ and $C(f): C(M) \ra C(N)$.
Let $\alpha[1], C(\alpha) \in K_1(\E)$ be the classes of double short exact sequences consisting of $M[1], N[1], L[1]$ and $C(M), C(N), C(L)$, respectively.
The following diagram
$$\xymatrix{
M \ar@<.5ex>[r]^{f_1} \ar@<-.5ex>[r]_{g_1} \ar@<.5ex>[d]^{i_M} \ar@<-.5ex>[d]_{i_M}  & N \ar@<.5ex>[r]^{f_2} \ar@<-.5ex>[r]_{g_2} \ar@<.5ex>[d]^{i_N} \ar@<-.5ex>[d]_{i_N} & L \ar@<.5ex>[d]^{i_L} \ar@<-.5ex>[d]_{i_L}\\
C(M) \ar@<.5ex>[r]^{C(f_1)} \ar@<-.5ex>[r]_{C(g_1)} \ar@<.5ex>[d]^{j_M} \ar@<-.5ex>[d]_{j_M} & C(N) \ar@<.5ex>[r]^{C(f_2)} \ar@<-.5ex>[r]_{C(g_2)} \ar@<.5ex>[d]^{j_N} \ar@<-.5ex>[d]_{j_N} & C(L) \ar@<.5ex>[d]^{j_L} \ar@<-.5ex>[d]_{j_L} \\
M[1] \ar@<.5ex>[r]^{f_1[1]} \ar@<-.5ex>[r]_{g_1[1]}  & N[1] \ar@<.5ex>[r]^{f_2[1]} \ar@<-.5ex>[r]_{g_2[1]} & L[1]
}$$
satisfies Nenashev's condition in \cite[Proposition 5.1]{Ne}.
Hence $$\alpha - C(\alpha) + \alpha[1]=\beta_M - \beta_N + \beta_L \in K_1(\E),$$
where $\beta_X$ is the class of the vertical double short exact sequence consisting of $X, C(X), X[1]$ for $X=M,N,L$.
By \cite[Lemma 3.1]{Ne}, $\beta_X=0 \in K_1(\E)$.
So $\alpha + \alpha[1] = C(\alpha)$, which is in the image of $K_1(\E^w) \ra K_1(\E)$.

If $\op{char}(\K)=2$, then $\lk=\K\lan a_1^{\pm1}, \dots, a_k^{\pm1} \ran / (a_ia_j=a_ja_i, i \neq j)$ is commutative as in Proposition \ref{prop iso}.
In particular, $a_i$ is central, invertible, closed and of degree $1$.
Define $h_M: M \ra M[1]$ by $h_M(m)=ma_1$ which is an endomorphism of $M$ in $\E$.
It is an isomorphism since $a_1$ is invertible.
Moreover, $h_N \circ f=f[1] \circ h_M$ for any morphism $f: M \ra N$ of $\E$.
The following diagram
$$\xymatrix{
M \ar@<.5ex>[r]^{f_1} \ar@<-.5ex>[r]_{g_1} \ar@<.5ex>[d]^{h_M} \ar@<-.5ex>[d]_{h_M}  & N \ar@<.5ex>[r]^{f_2} \ar@<-.5ex>[r]_{g_2} \ar@<.5ex>[d]^{h_N} \ar@<-.5ex>[d]_{h_N} & L \ar@<.5ex>[d]^{h_L} \ar@<-.5ex>[d]_{h_L}\\
M[1] \ar@<.5ex>[r]^{f_1[1]} \ar@<-.5ex>[r]_{g_1[1]} \ar@<.5ex>[d]^{0} \ar@<-.5ex>[d]_{0} & N[1] \ar@<.5ex>[r]^{f_2[1]} \ar@<-.5ex>[r]_{g_2[1]} \ar@<.5ex>[d]^{0} \ar@<-.5ex>[d]_{0} & L[1] \ar@<.5ex>[d]^{0} \ar@<-.5ex>[d]_{0} \\
0 \ar@<.5ex>[r]^{0} \ar@<-.5ex>[r]_{0}  & 0 \ar@<.5ex>[r]^{0} \ar@<-.5ex>[r]_{0} & 0
}$$
satisfies Nenashev's condition in \cite[Proposition 5.1]{Ne}.
It implies that $\alpha-\alpha[1]+0=0-0+0$ by \cite[Lemma 3.1]{Ne}.
Hence $\alpha=\alpha[1]$, and $2\alpha$ is in the image of $K_1(\E^w) \ra K_1(\E)$ for any $\alpha \in K_1(\E)$.
\end{proof}

\begin{rmk} \label{rmk char}
For the field $\K$ of any characteristic, there is a central element $a_1\cdots a_k$ of degree $k$ if $k$ is odd.
So $\alpha=\alpha[k]$, and $K_1(L_k)$ is $2$-torsion.
But the same argument does not apply if $k$ is even.
\end{rmk}

\begin{rmk}
It can be computed from \cite{Sch1} that $K_0(L_1) \cong \Z/2[L_1]$ and $K_1(L_1) \cong \K^*/(\K^*)^2$ which is $2$-torsion for the ground field $\K$ of any characteristic.
\end{rmk}

\begin{thm} \label{thm main}
There is an isomorphism of rings $K_0(\C) \cong \Z[\frac{1}{2}]$ if $\op{char}(\K)=2$.
\end{thm}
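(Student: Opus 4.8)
The plan is to derive the isomorphism from the surjectivity already recorded in Theorem~\ref{thm surj K0C}, so that the only thing left is injectivity of the ring map $\rho\colon\Z[\frac{1}{2}]\to K_0(\C)$. I will work through the identification $K_0(\C)\cong\varinjlim K_0(A_k)$ of (\ref{iso K0 Ak}), under which $\rho$ is the direct limit of the maps $\rho_k\colon\Z\to K_0(A_k)$, $\rho_k(1)=[A_k]$. Since direct limits of abelian groups are exact, it suffices to prove that each $\rho_k$ is an isomorphism. Granting this, the relation $[A_k]={i_k}_*[A_{k-1}]+[A_k[-1]]=-[A_k]+{i_k}_*[A_{k-1}]$ (an analogue of (\ref{eq iso2})) gives $2[A_k]={i_k}_*[A_{k-1}]$, so the system $(K_0(A_k),{i_k}_*)$ is isomorphic, via the $\rho_k$, to $\Z\xrightarrow{\,2\,}\Z\xrightarrow{\,2\,}\cdots$, whose colimit is $\Z[\frac{1}{2}]$; the induced map is exactly $\rho$, which is then an isomorphism of rings because it is already a ring homomorphism and $[A_k][A_{k'}]=[A_{k+k'}]$.

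The substance of the proof is an induction on $k$ showing $\rho_k\colon\Z\xrightarrow{\sim}K_0(A_k)$. The base case $k=0$ is immediate since $A_0\cong\K$ by (\ref{def A0}). For the inductive step I would feed the inductive hypothesis $K_0(A_{k-1})\cong\Z$ into the localization exact sequence (\ref{les}),
$$K_1(\lk)\xrightarrow{\ \bdry\ }K_0(A_{k-1})\xrightarrow{\ {i_k}_*\ }K_0(A_k)\xrightarrow{\ {j_k}_*\ }K_0(\lk).$$
This is the one place where the hypothesis $\op{char}(\K)=2$ is used: by Proposition~\ref{prop K1 of L} the group $K_1(\lk)$ is $2$-torsion, whereas $K_0(A_{k-1})\cong\Z$ is torsion free, so the connecting map $\bdry$ vanishes. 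Hence (\ref{les}) becomes a short exact sequence $0\to K_0(A_{k-1})\to K_0(A_k)\to K_0(\lk)\to 0$ with end terms $\Z$ and $\Z/2$, the latter by Proposition~\ref{prop K0 of L}. Finally I invoke Proposition~\ref{prop surj Ak}: $\rho_k$ is onto, so $K_0(A_k)$ is cyclic, and it contains the infinite cyclic subgroup $\IM({i_k}_*)$, hence $K_0(A_k)\cong\Z$ and the surjection $\rho_k$ is an isomorphism, completing the induction.

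I do not expect the remaining obstacle for this particular theorem to be large: the genuinely hard inputs---$K_0(\lk)\cong\Z/2$ from Proposition~\ref{prop K0 of L} and the $2$-torsion of $K_1(\lk)$ in characteristic two from Proposition~\ref{prop K1 of L}---have been established already, and what is left is formal manipulation of an exact sequence and a direct limit. The two points that warrant care are: ruling out the extension $K_0(A_k)\cong\Z\oplus\Z/2$, which is why cyclicity of $K_0(A_k)$ (equivalently surjectivity of $\rho_k$) is essential rather than just the short exact sequence; and checking that the colimit transition maps are multiplication by $2$ and not, say, the identity, so that $\varinjlim$ really is $\Z[\frac{1}{2}]$ and the multiplicative structure $[A_k]\cdot[A_{k'}]=[A_{k+k'}]$ matches $\frac{1}{2^k}\cdot\frac{1}{2^{k'}}=\frac{1}{2^{k+k'}}$ under $\rho$.
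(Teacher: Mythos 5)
Your proposal is correct and follows essentially the same route as the paper: induction on $k$ showing $\rho_k\colon\Z\to K_0(A_k)$ is an isomorphism, using the vanishing of $\bdry$ in the localization sequence (\ref{les}) because $K_1(\lk)$ is $2$-torsion in characteristic two while $K_0(A_{k-1})\cong\Z$ is torsion free, combined with the surjectivity of $\rho_k$ from Proposition~\ref{prop surj Ak}, and then passing to the direct limit $\Z\xrightarrow{2}\Z\xrightarrow{2}\cdots\cong\Z[\frac{1}{2}]$. The only cosmetic difference is that the paper extracts the conclusion via the snake lemma applied to the commutative diagram of short exact sequences, whereas you argue directly that a cyclic group containing an infinite cyclic subgroup is $\Z$; both correctly rule out the extension $\Z\oplus\Z/2$.
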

\begin{proof}
We want to show that $\rho_k: \Z \ra K_0(A_k)$ is an isomorphism by induction on $k$.
It is true when $k=0$ since $A_0 \cong \K$ by (\ref{def A0}).
Suppose that $\rho_{k-1}$ is an isomorphism.
If $\op{char}(\K)=2$, then $K_1(\lk)$ is $2$-torsion so that the map $\bdry: K_1(\lk) \ra K_0(A_{k-1}) \cong \Z$ is zero.
The commutative diagram in the proof of Proposition \ref{prop surj Ak} becomes
$$\xymatrix{
K_1(\lk) \ar[r]^{\bdry=0} & K_0(A_{k-1}) \ar[r]^{{i_k}_*} & K_0(A_k) \ar[r]^{{j_k}_*} & K_0(\lk)  &\\
& \Z \ar[r]^{2} \ar[u]^{\rho_{k-1}} & \Z \ar[r] \ar[u]^{\rho_{k}} & \Z_2  \ar[u]^{\cong} \ar[r] &0.
}$$
The snake lemma implies that $\rho_k$ is an isomorphism, and $K_0(A_k)$ is freely generated by $[A_k]$.

The same argument before Theorem \ref{thm surj K0C} shows that $K_0(\C) \cong \Z[\frac{1}{2}]$ as rings.
\end{proof}

\begin{rmk}\label{remark-change-sign}
If degrees of the generators $x$ and $y$ in Figure~\ref{sec3fig1} are exchanged, i.e. $\op{deg}(x)=-1, \op{deg}(y)=1$, then the resulting derived category will have an isomorphism $X^\we \cong \mb^\we \oplus X^\we[1]$ as the analogue of equation (\ref{eq iso}).
Since the quotient algebra $\lk$ is unchanged under the exchange of the degrees, Theorem~\ref{thm main} also holds in this case.

If $\op{deg}(x)=m, \op{deg}(y)=-m$ for some odd $m \neq \pm1$, then in the resulting derived category there is an isomorphism $X^\we \cong \mb^\we \oplus X^\we[-m]$, leading to a homomorphism from $\Z[\frac{1}{2}]$ to the Grothendieck ring of that category.
We don't know whether it is an isomorphism.
\end{rmk}


\subsection{A p-DG extension}

Witten-Reshetikhin-Turaev 3-manifold invariants, when extended to a 3-dimensional TQFT, require working over the ring $\ZNxi\subset \CC$, where $\xi$ is a primitive $N$-th root of unity. The space associated to a (decorated) surface in the TQFT is a free module over $\ZNxi$ and the maps associated to cobordisms are $\ZNxi$-linear.
This ring contains the subring $\Z[\xi]$ of cyclotomic integers.

When $N$ is a prime $p$, the ring $\Z[\xi]\cong \Z[q]/(1+q+\dots +q^{p-1})$. In the
notation, $\xi=e^{\frac{2\pi i}{p}}$ is an element of $\CC$
while $q$ is a formal variable, and the isomorphism takes $q$ to $\xi$.
Let us also denote this ring by $R_p$.
Ring $R_p$ admits a categorification, investigated in~\cite{Kh1,Qi}.
One works over
a field $\K$ of characteristic $p$ and forms a graded  Hopf algebra $H=\K[\partial]/(\partial^p)$, with $\deg(\partial) = 1$. The category of finitely-generated graded $H$-modules has a quotient category, called the stable category,
where morphisms which factor through a projective module are set to $0$.
The stable category $H\umod$ is triangulated monoidal and its
Grothendieck ring $K_0(H\umod)$ is naturally isomorphic to the cyclotomic
ring $R_p$. Multiplication
by $q$ corresponds to the grading shift $\{1\}$ in the category of graded
$H$-modules. The shift functor $[1]$ in the triangulated category $H\umod$
is different from the grading shift functor $\{ 1\}$.

We now explain a conjectural way to enhance this categorification of
$R_p$
using a version of isomorphism (\ref{eq1}) from the introduction to
categorify the ring $\Zpxi$ which contains both $R_p$ and $\Z[\xi]$ as subrings. The point is that in $\Z[\xi]$
there is an equality of principal ideals
$$(p) = (1-\xi)^{p-1} \ $$
see~\cite[Proposition 6.2]{Mi}, so that subrings $\Zpxi$ and $\Z\big[\frac{1}{1-\xi},\xi\big]$
of $\C$ coincide (equivalently, localizations $R_p\big[p^{-1}\big]$
and $R_p\big[(1-q)^{-1}\big]$ are isomorphic). Inverting $p$ is equivalent to inverting $1-\xi$,
and the latter can be inverted using a variation of isomorphism (\ref{eq1}).

Namely, one would like to have
a monoidal category  over a field $\K$ of characteristic
$p$ with a generating object $X$ and an isomorphism
\begin{equation} \label{eqPdg}
  X \cong X\{ 1\} \oplus \mb,
  \end{equation}
where $\{1\}$ is degree shift by one. Having this isomorphism requires the
four generating morphisms as in Figure~\ref{sec3fig5}, denoted $x,y,z,z^{\ast}$
 in Figure~\ref{sec3fig1}.
The degrees are now the opposite, $\deg(x)=-1,$ $\deg(y)=1$,
$\deg(z)=\deg(z^{\ast})=0$. The defining relations are the same, see Figure~\ref{sec3fig2}, but now all far-away generating morphisms commute rather than
super-commute.

The construction gives rise to a graded pre-additive category $\C$ with
objects -- tensor powers of $X$ and morphisms being planar diagrams built out
of generators subject to defining relations. We make $\C$ into a $p$-DG
category by equipping it with the derivation $\partial$ of degree $1$
that acts by zero on all generating morphisms, hence on all morphisms.

We then extend category $\C$ to a triangulated
category, as explained in Section~\ref{subsec-DG-ext}
for the DG case, by substituting the $p$-DG version everywhere.
We pass to the pre-triangulated $p$-DG category $\C^{pre}$ by formally
adding iterated tensor products with objects of $H\umod$,
finite direct sums and cones of morphisms. Shifts of objects are included
in this construction, since they are isomorphic to tensor products
with one-dimensional graded $H$-modules. The homotopy category
$\mathrm{Ho}(\C^{pre})$ is triangulated, and we define $\wt{\C}$ to
be its idempotent completion. The category $\wt{\C}$ is triangulated
monoidal Karoubi closed, and there is a natural ring homomorphism
\begin{equation}\label{p-homomorphism-eq}
  \Z\biggl[ \frac{1}{p},\xi\biggr] \ \lra \ K_0(\wt{\C})
  \end{equation}
taking $(1-\xi)^{-1}$ to $[X]$.

\vspace{0.05in}

\begin{prob} Is the map (\ref{p-homomorphism-eq}) an isomorphism?
\end{prob}

Beyond this problem, there is an open question whether
category $\wt{\C}$ can be used to enhance known categorifications of
quantum groups at prime roots of unity and to help with categorification
of the Witten-Reshetikhin-Turaev 3-manifold invariants at prime roots.


\section{A monoidal envelope of Leavitt path algebras}
\label{leavitt-section}

The goal of this section is to describe an additive monoidal Karoubi closed category $\C$ whose Grothendieck ring is conjecturally isomorphic to $\Z[\frac{1}{2}]$.

\subsection{Category $\C$}
Let $\K$ be a field.
Consider a $\K$-linear pre-additive strict monoidal category $\C$ with one generating object $X$, in addition to the unit object $\mb$.
A set of generating morphisms is given in Figure~\ref{leav1}.

\begin{figure}[h]
    \includegraphics[height=3.7cm]{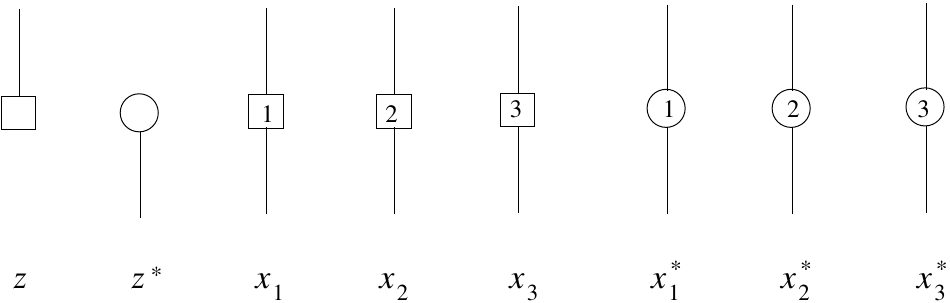}
    \caption{Generating morphisms.}
    \label{leav1}
\end{figure}

Six of these eight generating morphisms are endomorphisms of $X$,
one is a morphism from $\mb$ to $X$, and the last morphism
goes from $X$ to $\mb$.
We denote these generators by $z, \wtz, x_i, x_i^*$ for $i=1,2,3$, from left to right in Figure~\ref{leav1}.
In particular, $x_i, x_i^*$ are endomorphisms of $X$, $z$ a morphism from $\mb$ to $X$, and $\wtz$ a morphism from $X$ to $\mb$.
We draw $x_i$ as a long strand decorated by a box with label $i$, $x_i^*$ as a long strand decorated by a circle with label $i$, and $z$ as a short top strand decorated by an empty box, and $\wtz$ as a short bottom strand decorated by an empty circle, respectively.

A pair of far away generators commute.
Therefore, a horizontal composition of diagrams is independent of their height order.
Given two diagrams $f,g$, let $f\ot g$ denote the horizontal composition of $f$ and $g$, where $f$ is on the left of $g$.

Local relations are given in Figure~\ref{leav2}, where the vertical line is $1_{X}$, and the empty diagram is $1_{\mb}$.
The relations can be written as
\begin{align} \label{rel leavitt}
\begin{split}
& \wtz z= 1_{\mb}, \\
& x_i^* z = 0, \ \ \ \  \wtz x_i=0, \ \ \mbox{for} \ \ i=1,2,3, \\
& x_i^* x_j= \delta_{i,j}1_X, \ \ \mbox{for} \ \ i,j=1,2,3, \\
& \sum\limits_{i=1}^{3}x_i x_i^*+z \wtz = 1_X.
\end{split}
\end{align}

\begin{figure}[h]
    \includegraphics[height=6cm]{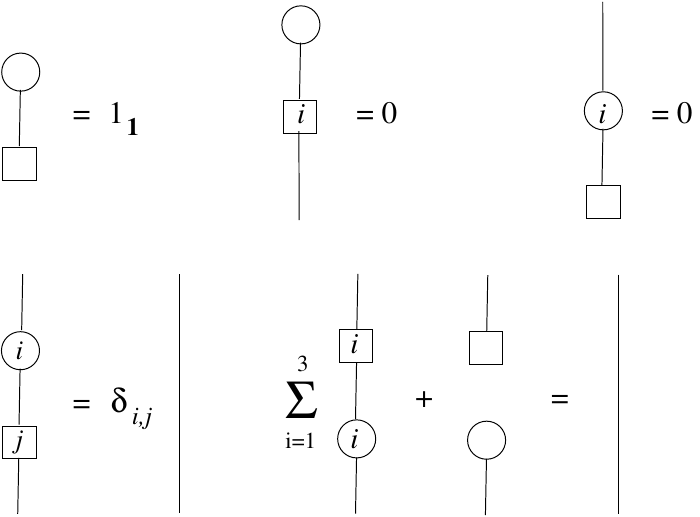}
    \caption{Defining local relations.}
    \label{leav2}
\end{figure}

Let $\Lambda=\bigsqcup\limits_{k\geq0}\Lambda^k=\bigsqcup\limits_{k\geq0}\{1,2,3\}^k$ be the set of sequences of indices, where $\Lambda^0$ consists of a single element of the empty sequence.
For $I=(i_1,\dots,i_k) \in \Lambda^k$, let $I^*=(i_k,\dots,i_1)$, and $|I|=k$.
Let $x_{I}, x_{I}^* \in \End_{\C}(X)$ denote compositions $x_{i_1}\cdots x_{i_k}$ and $x_{i_1}^*\cdots x_{i_k}^*$.
Let $z_{I}=x_{I}z \in \Hom_{\C}(\mb,X)$ and $\wtz_{I}=\wtz x_{I}^* \in \Hom_{\C}(X,\mb)$.
If $|I|=|J|$, then $x_{I}^*x_{J}=\delta_{I,J^*}1_{X}$, and subsequently $z_{I}^*z_{J}=\delta_{I,J^*}1_{\mb}$.

Figure~\ref{leav3} shows our notations for some vertical compositions of generating morphisms.
We draw $x_I$ as a long strand decorated by a box with label $I$, and $x_{I}^*$ as a long strand decorated by a circle with label $I$, respectively.
We draw $z_I$ as a short top strand decorated by a box with label $I$, and $\wtz_I$ as a short bottom strand decorated by a circle with label $I$, respectively.

\begin{figure}[h]
    \includegraphics[height=4cm]{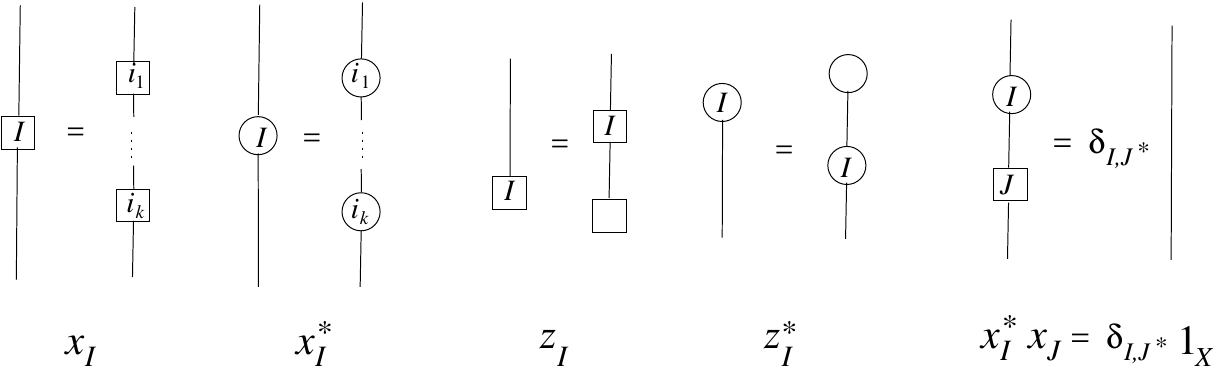}
    \caption{Notations for compositions $x_{I}, x_{I}^*, z_{I}, \wtz_{I}$ for $I=(i_1,\dots,i_k)$, and the relation $x_{I}^*x_{J}=\delta_{I,J^*}1_{X}$ if $|I|=|J|$.}
    \label{leav3}
\end{figure}


\vspace{.2cm}
\n{\bf Bases of morphism spaces.}
We observe that the category $\C$ is generated from the suitable
data $(A,e)$ as described in Section~\ref{basis-section},
where $A$ is a $\K$-algebra.
To see this, we restrict the above diagrams and defining relations on them to the case when there is at most one strand at the top and at most
one strand at the bottom. In other words, we consider generating
morphisms in Figure~\ref{leav1} and compose them only vertically,
not horizontally, with the defining relations in Figure~\ref{leav2}. The idempotent $e$ is given by the
empty diagram, while $1-e$ is the undecorated vertical strand diagram.

It follows from the defining relations (\ref{rel leavitt}) that $A$ is isomorphic to the Leavitt path algebra $L(Q)$ of the graph $Q$ in (\ref{quiver1}).
In particular, as a $\K$-vector space, algebra $A$ has a basis
$$ \{1_{\mb}\} \cup \{ z_{I} ~|~ I \in \Lambda\} \cup \{\wtz_{I} ~|~ I \in \Lambda\} \cup\{z_{I}\wtz_{J} ~|~ I,J \in \Lambda\} \cup \{x_{I}~x_{J}^*~| ~~I, J \in \Lambda, (i_{|I|},j_1)\neq(3,3)\} $$
by~\cite[Corollary 1.5.12]{AAS}.
Our notations for some of these basis elements are shown in
Figure~\ref{leav3}.

The basis of $A$ can be split into the following disjoint subsets:
\be
\item $eAe\cong \K$ has a basis  $\mathbb{B}_{0,0}=\{1_{\mb}\}$ consisting of a single element which is the empty diagram;
\item $(1-e)Ae$ has a basis $\mathbb{B}_{1,0}=\{z_{I} ~|~ I \in \Lambda\}$. Element $z_{I}$
is depicted by a short top strand decorated by a box with label $I$, see Figure~\ref{leav3};
\item $eA(1-e)$ has a basis $\mathbb{B}_{0,1}=\{\wtz_{I} ~|~ I \in \Lambda\}$. Element
$\wtz_{I}$ is depicted by a short bottom strand decorated by a circle with label $I$ (lollipop in Figure~\ref{leav3});
\item $(1-e)A(1-e)$ has a basis $\mathbb{B}_{1,1}(0) \sqcup \mathbb{B}_{1,1}(1)$, where $\mathbb{B}_{1,1}(0)=\{z_{I}\wtz_{J} ~|~ I,J \in \Lambda\}$ consists of pairs (short top strand with a labelled box, short bottom strand with a labelled circle), and $\mathbb{B}_{1,1}(1)=\{x_{I}~x_{J}^*~| ~~I, J \in \Lambda, (i_{|I|},j_1)\neq(3,3)\}$ consists of long strands whose decoration satisfies that no circle is above any box, and no box with label $3$ is next to a circle with label $3$.
\ee
The multiplication map $ (1-e)Ae \otimes eA(1-e)  \ra (1-e)A(1-e)$ sends the basis $\mathbb{B}_{1,0} \times \mathbb{B}_{0,1}$ of
$(1-e)Ae \otimes eA(1-e)$ bijectively to $\mathbb{B}_{1,1}(0)$ so that the multiplication map is injective.

We see that the conditions on $(A,e)$ from
the beginning of Section~\ref{basis-section} are satisfied,
and we can indeed form the monoidal category $\C$ as above with objects
$X^{\otimes n}$, over $n\ge 0$.
Algebra $A$ can then be described as the direct sum
$$A\ \cong \ \End_{\C}(\mb)\oplus \Hom_{\C}(\mb, X) \oplus \Hom_{\C}(X,\mb) \oplus \End_{\C}(X).$$
Therefore, a basis of $\Hom_{\C}(\xn, \xm)$ is given in Theorem \ref{basisthm}.

\vspace{.2cm}
\n{\bf The idempotent completion of $\C$.}
Recall that $\C^{add}$ denotes the additive closure of $\C$, and $\Ka(\C)$ denotes the idempotent completion of $\C^{add}$.
Objects of $\C^{add}$ are finite formal direct sums of nonnegative powers $\xn$, where $X^{\ot 0}=\mb$.
The category $\Ka(\C)$ is $\K$-linear additive strict monoidal.

Defining local relations are chosen to have an isomorphism in $\Ka(\C)$:
\begin{gather} \label{eq iso leavitt}
X \cong \mb \oplus X^{3},
\end{gather}
given by $(\wtz, x_1^*, x_2^*, x_3^*)^T \in \Hom_{\Ka(\C)}(X, \mb \oplus X^{3})$, and $(z,x_1,x_2,x_3) \in \Hom_{\Ka(\C)}(\mb \oplus X^{3},X)$, see Figure~\ref{leav4}.
Tensoring with $X^{\otimes (k-1)}$ in $\Ka(\C)$ on either side of isomorphism (\ref{eq iso leavitt}) results in isomorphisms in $\Ka(\C)$
\begin{gather} \label{eq iso2 leavitt}
X^{\otimes k} \cong X^{\otimes (k-1)} \oplus (X^{\otimes k})^{3}.
\end{gather}

\begin{figure}[h]
    \includegraphics[height=4cm]{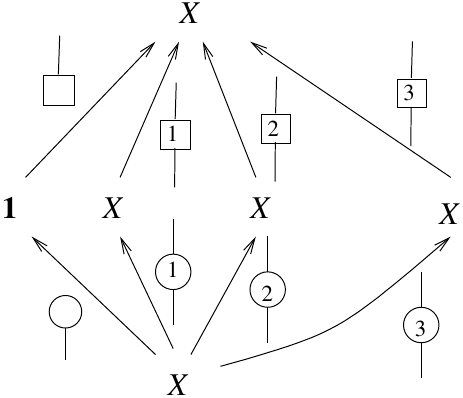}
   \caption{The isomorphism $X \cong \mb \oplus X^{3}$ in $\Ka(\C)$.}
    \label{leav4}
\end{figure}

\vspace{.2cm}
\n {\bf Algebras of endomorphisms.}
Part of the structure of $\Ka(\C)$ can be encoded into an idempotented algebra $B$, which has a complete system of mutually orthogonal idempotents $\{1_n\}_{n\ge 0},$ so that
$$ B \ = \ \bigoplus\limits_{m,n\ge 0} 1_m B 1_n,$$
and
$$ 1_m B 1_n = \Hom_{\Ka(\C)}(X^{\otimes n},X^{\otimes m}).$$
Multiplication in $B$ matches composition of morphisms in $\C$.

We also define
$$ B_k = \bigoplus\limits_{m,n\le k} 1_m B 1_n,$$
which is an algebra with the unit element $\sum\limits_{n \le k} 1_n$.
The inclusion $B_k \subset B_{k+1}$ is nonunital.
The algebra $B_0 \cong \K$.
Define
$$ A_k = 1_k B 1_k = \End_{\Ka(\C)}(\xk),$$
which is an algebra with the unit element $1_k$.
The inclusion $A_k \subset B_k$ is nonunital for $k>0$.

Let $\alpha_k: A_{k-1} \hookrightarrow A_{k}$ be an inclusion of algebras given by tensoring with $z\wtz$ on the left
\begin{gather}
\alpha_k(f)=(z\wtz)\otimes f,
\end{gather}
for $f \in A_{k-1}$.
Note that $\alpha_k$ is nonunital.


\subsection{Towards computing $K_0(\Ka(\C))$}
Let $\C_k$ be the smallest full subcategory of $\C$ which contains the objects $X^{\otimes n}$, $0 \leq n \leq k$.
Let $\Ka(\C_k)$ be the idempotent completion of the additive closure $\C_k^{add}$ of $\C_k$.
There is a family of inclusions of additive categories $\Ka(\C_{k-1})\subset \Ka(\C_k)$.
Similarly, there is a family of inclusions $g_k: \Ka(\C_k) \ra \Ka(\C)$ of additive categories.
We have the analogue of Proposition \ref{prop K0 of C}.

\begin{prop} \label{prop K0 of C leavitt}
There is a natural isomorphism of abelian groups
$$K_0(\Ka(\C)) \cong \varinjlim K_0(\Ka(\C_k)).$$
\end{prop}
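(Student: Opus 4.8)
The plan is to follow the proof of Proposition~\ref{prop K0 of C} almost verbatim, with the triangulated categories $D^c(-)$ and distinguished triangles replaced by the additive idempotent complete categories $\Ka(-)$ and direct sum decompositions.

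First I would check that the inclusions induce fully faithful additive functors $\Ka(\C_{k-1}) \to \Ka(\C_k)$ and $g_k \colon \Ka(\C_k) \to \Ka(\C)$. Since $\C_{k-1}$ is a full subcategory of $\C_k$, the additive closure $\C_{k-1}^{add}$ is a full additive subcategory of $\C_k^{add}$ (morphisms between finite direct sums are matrices of morphisms of the underlying categories), and passing to idempotent completions preserves full faithfulness: a morphism between $(M,e)$ and $(N,f)$ in $\Ka(\C_{k-1})$ is an element of $f\,\Hom(M,N)\,e$, computed identically in $\Ka(\C_k)$; the same applies to $g_k$. Moreover every object $(M,e)$ of $\Ka(\C)$ has $M \cong \bigoplus_i X^{\otimes n_i}$ a finite direct sum, hence lies in $\Ka(\C_k)$ for $k = \max_i n_i$, and every morphism of $\Ka(\C)$ between two such objects already lies in $\Ka(\C_k)$ for $k$ large; thus $\Ka(\C)$ is the filtered union of its full subcategories $\Ka(\C_k)$, and in particular $g_{k-1}$ and $g_k$ precomposed with $\Ka(\C_{k-1}) \to \Ka(\C_k)$ are isomorphic. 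Hence the maps ${g_k}_*$ assemble into a homomorphism $g_* \colon \varinjlim K_0(\Ka(\C_k)) \to K_0(\Ka(\C))$.

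It then remains to show $g_*$ is an isomorphism, which amounts to the fact that the split Grothendieck group of an additive category commutes with filtered colimits of such categories. Concretely, $K_0(\Ka(\C))$ is the free abelian group on isomorphism classes of objects modulo the relations $[A \oplus B] = [A] + [B]$; by the previous paragraph each isomorphism class and each such relation is already visible in some $\Ka(\C_k)$, and conversely relations valid at a finite level remain valid in the union. Spelling this out gives surjectivity of $g_*$ (any object of $\Ka(\C)$, up to isomorphism, lies in some $\Ka(\C_k)$, so its class is in the image of ${g_k}_*$) and injectivity (if a class in $K_0(\Ka(\C_k))$ dies in $K_0(\Ka(\C))$, then, writing it as $[P] - [Q]$, there is $N$ with $P \oplus N \cong Q \oplus N$ in $\Ka(\C)$; since $N$ and this isomorphism lie at some finite level $k' \ge k$, the class already dies in $K_0(\Ka(\C_{k'}))$, hence in the direct limit). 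I expect the only point needing genuine care, though it is bookkeeping rather than a real difficulty, to be the claim in the second paragraph that idempotent completion does not enlarge Hom spaces between objects already present at a finite level, so that the transition and inclusion functors are fully faithful and $\Ka(\C)$ is honestly exhausted by the $\Ka(\C_k)$.
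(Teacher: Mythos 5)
Your proposal is correct and follows exactly the route the paper intends: the paper gives no separate proof here, simply declaring the statement "the analogue of Proposition \ref{prop K0 of C}," whose proof (assemble the ${g_k}_*$ into $g_*$, surjectivity because every object lives at a finite level, injectivity because every defining relation lives at a finite level) you transcribe faithfully with distinguished triangles replaced by split direct sum decompositions. Your extra care about full faithfulness of the inclusions after idempotent completion and about the stable isomorphism $P\oplus N\cong Q\oplus N$ descending to a finite level is exactly the right bookkeeping and fills in what the paper leaves implicit.
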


For a unital algebra $A$, let $\pr(A)$ denote the additive category of finitely generated projective right $A$-modules.
Let $K_0(A)$ denote the split Grothendieck group of $\pr(A)$.
The category $\C_k$ contains a full subcategory $\C'_k$ with a single object $\xk$ whose endomorphism algebra $\op{End}_{\C_k}(\xk)=A_k$.
Let $\Ka(\C'_k)$ be the idempotent completion of the additive closure ${\C'_k}^{add}$ of $\C'_k$.
Thus, the category $\Ka(\C'_k)$ is isomorphic to $\pr(A_k)$.
There is an inclusion $h_k: \pr(A_k) \subset \Ka(\C_k)$ of additive categories.
Isomorphism (\ref{eq iso2 leavitt}) implies that $h_k$ is an equivalence.
Therefore, ${h_k}_*: K_0(A_k) \ra K_0(\Ka(\C_k))$ is an isomorphism.
By Proposition \ref{prop K0 of C leavitt}, there is a natural isomorphism of abelian groups:
\begin{gather} \label{iso K0 of C leavitt}
K_0(\Ka(\C)) \cong \varinjlim K_0(A_k).
\end{gather}
Here $A_k$ is just a $\K$-algebra without the grading and the differential, and $K_0(A_k)$ is the usual Grothendieck group of the ring.
The major part of the complexity in this construction lies in dealing
with algebras of exponential growth, including the endomorphism algebra
$A_k$ of the object $X^{\otimes k}$.

\vspace{.2cm}
\n{\bf An approach to $K_0(A_k)$.}
Recall the chain of two-sided ideals $J_{n,k}$ from (\ref{def ideal}), and the quotient algebra $L_k=A_k / J_{k-1,k}$ from (\ref{def lk}) in Section \ref{basis-section}.
The algebra $L_k$ is naturally isomorphic to $L^{\otimes k}$ for $L=L_1$.

For $k=1$, $A_1=J_{1,1}$ has a basis $\BB_{1,1}=\BB_{1,1}(0) \sqcup \BB_{1,1}(1)$, and $J=J_{0,1}$ has a basis $\BB_{1,1}(0)$ with respect to the inclusion $J_{0,1} \subset A_1$, by Corollary \ref{cor-basis-ideal}.
Under the quotient map $A_1 \ra L$, the set $\BB_{1,1}(1)$ is mapped bijectively to the \emph{normal form} basis of $L=L_1$, see \cite[Section 5]{Co2}.
The algebra $L$ is naturally isomorphic to the Leavitt algebra $L(1,3)$.
Thus,
$$L_k \cong L(1,3)^{\otimes k}.$$

If we view $A_k$ as a DG algebra concentrated in degree $0$ with the trivial differential, the analogue of Lemma \ref{lem exact} still holds.
Therefore, there is an induced exact sequence of K-groups
\begin{gather} \label{les leavitt}
K_1(\lk) \xra{\bdry} K_0(A_{k-1}) \xra{{i_k}_*} K_0(A_k) \xra{{j_k}_*} K_0(\lk).
\end{gather}

\begin{conj} \label{conj lk}
For $k \geq 1$, $K_0(\lk)$ is isomorphic to $\Z/2$ with a generator $[\lk]$, and $K_1(\lk)$ is torsion.
\end{conj}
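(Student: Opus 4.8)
The plan is to prove, by induction on $k$, a strengthened statement: $K_0(\lk)\cong\Z/2$ generated by $[\lk]$, $K_i(\lk)$ is torsion for every $i\ge 1$, and $K_i(\lk)=0$ for $i<0$. Note first that the module isomorphism $\lk\cong\lk^{\oplus 3}$ (the restriction to the last tensor factor of $L(1,3)\cong L(1,3)^{\oplus 3}$) already forces $[\lk]=3[\lk]$, i.e.\ $2[\lk]=0$ in $K_0(\lk)$; the content of the conjecture is that $K_0(\lk)$ is no larger than $\Z/2$, together with the torsion-ness of $K_1$. The base case $k=1$ is the computation of Ara--Brustenga--Corti\~nas~\cite{ArBrCo} (see also~\cite{A,AAS}) recalled in the Remark above: $L(1,3)$ sits in a long exact sequence $\cdots\to K_m(\K)\xra{1-3}K_m(\K)\to K_m(L(1,3))\to K_{m-1}(\K)\to\cdots$ in which $1-3=-2$, and since $K_m(\K)=0$ for $m<0$, $K_0(\K)=\Z$, $K_1(\K)=\K^*$, one reads off $K_0(L(1,3))=\Z/2\cdot[L(1,3)]$, $K_1(L(1,3))=\K^*/(\K^*)^2$ ($2$-torsion), $K_m(L(1,3))$ torsion for $m\ge 1$, and $K_m(L(1,3))=0$ for $m<0$.

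For the inductive step, write $\lk=L_{k-1}\otimes_\K L(1,3)$. The relations $x_i^* x_j=\delta_{i,j}$, $\sum_{i=1}^3 x_ix_i^*=1$ holding in the $k$-th tensor factor, together with the fact that the $x_i,x_i^*$ commute with $L_{k-1}$, exhibit $\lk$ as the algebraic Cuntz--Pimsner ring $\mathcal{O}_{L_{k-1}}\!\big(L_{k-1}^{\oplus 3}\big)$ of the free $L_{k-1}$-bimodule of rank $3$ --- equivalently, $\lk$ is the base change along $\K\to L_{k-1}$ of $L(1,3)=\mathcal{O}_\K(\K^{\oplus 3})$. The endofunctor $M\mapsto M\otimes_{L_{k-1}}L_{k-1}^{\oplus 3}=M^{\oplus 3}$ of right $L_{k-1}$-modules acts on $K$-theory as multiplication by $3$. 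Invoking the Pimsner--Voiculescu-type long exact sequence for such Cuntz--Pimsner rings over an arbitrary unital coefficient ring $R=L_{k-1}$ (the algebraic analogue of Pimsner's theorem; for $R$ a field it is the sequence of~\cite{ArBrCo}), equivalently a distinguished triangle $R\xra{1-[R^{\oplus 3}]}R\to\lk$ in bivariant algebraic $K$-theory, yields $\cdots\to K_m(L_{k-1})\xra{-2}K_m(L_{k-1})\to K_m(\lk)\to K_{m-1}(L_{k-1})\to\cdots$. Feeding in the inductive hypothesis: $K_m(\lk)=0$ for $m<0$; $K_0(\lk)=\op{coker}\!\big(\Z/2\xra{-2}\Z/2\big)=\Z/2$, generated by the image of $[L_{k-1}]$, namely $[\lk]$, with boundary $K_0(\lk)\to K_{-1}(L_{k-1})=0$; and for $m\ge 1$ the group $K_m(\lk)$ is an extension of a subgroup of $K_{m-1}(L_{k-1})$ by a quotient of $K_m(L_{k-1})$, both torsion, hence $K_m(\lk)$ is torsion. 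This closes the induction and proves the conjecture.

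\emph{The main obstacle} is to establish this Pimsner--Voiculescu sequence with the non-Noetherian, infinite-global-dimension ring $L_{k-1}$ in the coefficient slot. The results of~\cite{ArBrCo} are stated for Leavitt path algebras over a field, and $L_{k-1}\otimes_\K L(1,3)$ is not itself a Leavitt path algebra, so one needs the Cuntz--Pimsner generalization over a general unital $R$: concretely, that the Toeplitz-type extension $0\to\mathrm{M}_\infty(L_{k-1})\to\mathcal{T}\to\lk\to 0$ is $K$-theoretically excisive (plausible, since infinite matrix rings are $H$-unital) \emph{and} that the Toeplitz ring $\mathcal{T}$ is $K$-equivalent to $L_{k-1}$, for arbitrary $R$. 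A variant route, since the $k=1$ sequence realizes a triangle $\K\xra{1-3}\K\to L(1,3)$ and hence places every $\lk$ in the bivariant-$K$-theory subcategory generated by $\K$, is to run a K\"unneth argument on $\lk=L(1,3)^{\otimes k}$; then the obstacle migrates to the availability, over the given ground field, of a sufficiently precise K\"unneth theorem in algebraic $kk$-theory --- the algebraic counterpart of Rosenberg--Schochet --- which does not appear to be on the books in the generality required. Finally, the PV-type statements above are most naturally available for homotopy $K$-theory $KH$, so one must additionally verify $K_0$-regularity of $\lk$ (that $K_0(\lk)\to K_0(\lk[t_1,\dots,t_n])$ is an isomorphism for all $n$) to be certain the outcome is the ordinary Grothendieck group demanded by the conjecture. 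These are exactly the points left unresolved in~\cite{ArBrCo}.
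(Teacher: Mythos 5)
You should know at the outset that the paper contains no proof of this statement: it is stated as Conjecture~\ref{conj lk}, the authors record that it is known only for $k=1,2$ (by \cite[Theorem 7.6]{ArBrCo}, which uses the regular supercoherence of $L(1,3)$ established in \cite[Lemma 6.1]{ArCo}), and they explicitly name the ignorance of $K_i(L(1,3)^{\otimes k})$ for $i=0,1$ and general $k$ as the reason Section~\ref{leavitt-section} stops short of proving $K_0(\Ka(\C))\cong\Ztwo$. So your proposal cannot be compared with a proof in the paper; it can only be assessed as an attack on an open problem, and as such it is not a complete proof.

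The genuine gap is exactly where you place it, and it is worth being precise about why the induction stalls. Your base case is correct, and the step from $k=1$ to $k=2$ is exactly \cite[Theorem 7.6]{ArBrCo}: it works because the coefficient ring $L_1=L(1,3)$ is regular supercoherent, which is what allows the Pimsner--Voiculescu-type sequence $K_m(R)\xra{-2}K_m(R)\ra K_m(R\ot L(1,3))\ra K_{m-1}(R)\ra\cdots$ to be established with $R=L_1$ and with ordinary (rather than homotopy) $K$-theory. To pass from $k=2$ to $k=3$ you need the same sequence with $R=L_2=L(1,3)^{\otimes 2}$, and it is not known whether $L(1,3)^{\otimes k}$ is regular supercoherent (or even $K_0$-regular) for $k\ge 2$; regular supercoherence is not known to be preserved under tensor product over $\K$. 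Without that, the Toeplitz-type extension need not be $K$-theoretically excisive for ordinary $K$-theory, the identification of the Toeplitz ring with $L_{k-1}$ on $K$-groups is unavailable, and the $KH$-versions of these statements do not automatically descend to $K_0$. Your alternative K\"unneth route faces the same wall: no algebraic K\"unneth/$kk$-theoretic statement of the required strength is in the literature. To your credit, you identify all of this candidly, so the proposal should be read as an accurate reduction of the conjecture to the open problems already flagged in \cite{ArBrCo} and in the paper itself --- a reasonable research plan, but not a proof.
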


The Leavitt algebra $L_1$ is regular supercoherent by \cite[Lemma 6.1]{ArCo}.
The conjecture is known to be true for $k=1,2$, see \cite[Theorem 7.6]{ArBrCo}.

By an argument similar to that in the proof of Theorem \ref{thm main}, if Conjecture~\ref{conj lk} is true, then there is a ring isomorphism
$$K_0(\Ka(\C)) \cong \Z\bigg[\frac{1}{2} \bigg].$$

\vspace{.2cm}
\n{\bf Categorical actions of $\Ka(\C)$.}
There is an action $F_m: \Z[\frac{1}{2}] \times \Z/(2m+1) \ra \Z/(2m+1)$ of the ring $\Z[\frac{1}{2}]$ on the abelian group $\Z/(2m+1)$, where $-\frac{1}{2}$ acts as multiplication by $m$.

Recall that $L(m,n)$ is the $\K$ algebra generated by entries of $x_{ij}, y_{ij}$ of matrices $X=(x_{ij}), Y=(y_{ij})$ of size $m\times n$ and $n\times m$ respectively, subject to the relation: $XY=I_m, YX=I_n$.
The algebra $L(m,n)$ is the universal object with respect to the non-IBN (Invariant Basis Number) property: $R^m \cong R^n$.
It is known \cite{AG} that $K_0(L(m,n))\cong \Z/(n-m)$ generated by the class $[L(m,n)]$.

There is a family of categorical actions
$$\cal{F}_m: \Ka(\C) \times \cal{P}(L(m,3m+1)) \ra \cal{P}(L(m,3m+1))$$
of $\Ka(\C)$ on the category of finitely generated projective right $L(m,3m+1)$-modules,
where the generating object $X$ of $\Ka(\C)$ acts by tensoring with the $L(m,3m+1)$ bimodule $L(m,3m+1)^{m}$.
Conjecturally, $\cal{F}_m$ categorifies the linear action $F_m$.


\subsection{A possible categorification of $\Zen$} \label{Sec Zen}
We consider a pre-additive $\K$-linear strict monoidal category $\C$ with a generating object $X$ in addition to the unit object $\mb$ and require the following isomorphism
\begin{equation}
X\cong  \mb \oplus X^{n+1}.
\end{equation}
This isomorphism is a natural generalization of isomorphism (\ref{eq iso leavitt}). Generating morphisms that induce these mutually-inverse isomorphisms  are denoted $z \in \Hom_{\C}(\mb, X), \wtz \in \Hom_{\C}(X, \mb)$, and $x_i,x_i^* \in \End_{\C}(X)$ for $1\leq i \leq n+1$.
The defining relations, generalizing relations (\ref{rel leavitt}), are
\begin{align} \label{rel leavitt n}
\begin{split}
& \wtz z= 1_{\mb}, \\
& x_i^* z = 0, \ \ \ \  \wtz x_i=0, \ \ \mbox{for} \ \ 1\leq i \leq n+1, \\
& x_i^* x_j= \delta_{i,j}1_X, \ \ \mbox{for} \ \ 1\leq i,j \leq n+1, \\
& \sum\limits_{i=1}^{n+1}x_i x_i^*+z \wtz = 1_X.
\end{split}
\end{align}

Let $A$ denote the algebra generated by $1_{\mb}, 1_X, z, \wtz, x_i, x_i^*,$ $1\le i \le n+1$, subject to the relations above and
the obvious compatibility relations between generators
$z,z^{\ast},x_i,x_i^*$ and idempotents $1_{\mb}, 1_X$,
for instance, $z 1_{\mb} = z = 1_X z$ and $z 1_X = 0 = 1_{\mb} z$.
.
The data $(A, e=1_{\mb})$ satisfies the conditions described at the beginning of Section~\ref{basis-section}.
Thus, we can form a pre-additive monoidal category $\C$ and recover
bases of morphisms between tensor powers of $X$ from suitable bases
of $A$ compatible with the idempotent decomposition $1=e+(1-e)$,
as explained in Section~\ref{basis-section}.
Let $\Ka(\C)$ denote the idempotent completion of the additive closure of $\C$. Category $\Ka(\C)$ is an additive $\K$-linear Karoubi closed monoidal category.
\begin{conj} \label{conj n}
There is a ring isomorphism $K_0(\Ka(\C)) \cong \Zen$.
\end{conj}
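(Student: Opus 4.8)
The plan is to run the proof of Theorem~\ref{thm main} almost verbatim, replacing the DG algebras $\lk$ of Proposition~\ref{prop iso} by their Leavitt-algebra counterparts. By the analogue of Proposition~\ref{prop K0 of C leavitt}, together with the equivalence $\pr(A_k)\simeq\Ka(\C_k)$ furnished by the analogue of isomorphism~(\ref{eq iso2 leavitt}), we have a natural isomorphism of abelian groups $K_0(\Ka(\C))\cong\varinjlim K_0(A_k)$, where $A_k=\End_{\Ka(\C)}(\xk)$ and the transition maps ${i_k}_*$ are induced by tensoring with the bimodule $I_k=e_kA_k$, $e_k=(z\wtz)\ot 1_{k-1}$. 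Deleting the unit summand from the defining isomorphism $X\cong\mb\oplus X^{n+1}$ gives $X\cong X^{n+1}$, so $L=L_1$ is the Leavitt algebra $L(1,n+1)$ and $\lk=A_k/J_{k-1,k}\cong L(1,n+1)^{\ot k}$; recall that $K_0(L(1,n+1))\cong\Z/n$, generated by $[L(1,n+1)]$, by~\cite{AG}.

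Viewing each $A_k$ as a DG algebra concentrated in degree $0$, the analogue of Lemma~\ref{lem exact} holds, so the Thomason--Waldhausen localization theorem produces an exact sequence of $K$-groups
$$K_1(\lk)\xra{\bdry}K_0(A_{k-1})\xra{{i_k}_*}K_0(A_k)\xra{{j_k}_*}K_0(\lk)\lra 0$$
as in~(\ref{les leavitt}), where surjectivity of ${j_k}_*$ is automatic once $K_0(\lk)$ is cyclic on the class $[\lk]={j_k}_*[A_k]$. Decomposing the free right $A_k$-module $A_k$ according to $\xk\cong X^{\ot(k-1)}\oplus(\xk)^{n+1}$ gives the relation $[A_k]={i_k}_*[A_{k-1}]+(n+1)[A_k]$ in $K_0(A_k)$, and hence ${i_k}_*[A_{k-1}]=-n\,[A_k]$; this is the relation driving the whole computation.

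The core of the argument is the inductive claim that $\rho_k\colon\Z\ra K_0(A_k)$, $\rho_k(1)=[A_k]$, is an isomorphism for every $k$; the base case $\rho_0\colon\Z\xra{\cong}K_0(\K)$ is immediate. For the inductive step one needs the natural generalization of Conjecture~\ref{conj lk}: that $K_0(\lk)\cong\Z/n$ with generator $[\lk]$ and that $K_1(\lk)$ is torsion. Granting this, $\bdry\colon K_1(\lk)\ra K_0(A_{k-1})\cong\Z$ vanishes (a torsion group has no nonzero homomorphism to $\Z$), so ${i_k}_*$ is injective; placing below the resulting short exact sequence the short exact sequence $0\ra\Z\xra{-n}\Z\ra\Z/n\ra 0$, with vertical maps $\rho_{k-1}$, $\rho_k$ and the isomorphism $\Z/n\cong K_0(\lk)$, both squares commute by the two relations just recorded, and the snake lemma forces $\rho_k$ to be an isomorphism. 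Passing to the colimit, $K_0(\Ka(\C))\cong\varinjlim\bigl(\Z\xra{-n}\Z\xra{-n}\cdots\bigr)\cong\Zen$ as abelian groups; and the monoidal structure on $\Ka(\C)$, which gives $[A_k]\,[A_{k'}]=[A_{k+k'}]$, upgrades this under $\pr(A_k)\simeq\Ka(\C_k)$ to a ring isomorphism $\Zen\xra{\cong}K_0(\Ka(\C))$ sending $1\mapsto[\mb]$ and $-\tfrac{1}{n}\mapsto[X]$, consistently with the relation $-n[X]=[\mb]$ read off from $X\cong\mb\oplus X^{n+1}$.

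The single genuinely hard input is the $K$-theory used in the induction: computing $K_0$ and controlling the torsion of $K_1$ of the tensor powers $L(1,n+1)^{\ot k}$. This is the exact analogue of Conjecture~\ref{conj lk}, open for general $k$; even the surjectivity half (the analogue of Theorem~\ref{thm surj K0C}) already requires $K_0(\lk)$ to be cyclic on $[\lk]$. For $n=2$ this is known for $k\le 2$ by~\cite[Theorem 7.6]{ArBrCo}, and analogous partial results should be available for general $n$ and small $k$, but in full generality Conjecture~\ref{conj n} reduces precisely to these unresolved questions about the higher $K$-theory of Leavitt algebras and their tensor powers.
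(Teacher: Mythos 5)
The statement you were asked to prove is labelled as a conjecture in the paper, and the paper does not prove it: immediately after Conjecture~\ref{conj lk} the authors only remark that \emph{if} that conjecture holds, then an argument parallel to Theorem~\ref{thm main} yields the ring isomorphism, and Conjecture~\ref{conj n} is the same reduction for general $n$. Your proposal reconstructs exactly that intended reduction, and does so accurately: the identification $K_0(\Ka(\C))\cong\varinjlim K_0(A_k)$ via the equivalence $\pr(A_k)\simeq\Ka(\C_k)$, the quotients $\lk\cong L(1,n+1)^{\otimes k}$, the localization sequence from the analogue of Lemma~\ref{lem exact}, the relation ${i_k}_*[A_{k-1}]=-n[A_k]$ coming from $X^{\otimes k}\cong X^{\otimes(k-1)}\oplus(X^{\otimes k})^{n+1}$, the induction on $\rho_k$ via the snake lemma, and the colimit $\varinjlim(\Z\xra{-n}\Z\xra{-n}\cdots)\cong\Zen$ all match the paper's template. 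You are also right, and commendably explicit, about where the argument is not a proof: it requires $K_0(L(1,n+1)^{\otimes k})\cong\Z/n$ generated by the free module and $K_1(L(1,n+1)^{\otimes k})$ torsion for all $k$, which is the analogue of Conjecture~\ref{conj lk} and is open beyond small $k$ (the paper cites \cite[Theorem 7.6]{ArBrCo} only for $k=1,2$ in the $n=2$ case). So what you have is a correct conditional argument identifying the precise missing input, which is the same state of knowledge the paper itself records; no more can be expected for a statement the authors themselves leave as a conjecture.
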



\end{document}